\crefname{hypothesis}{Hypothesis}{Hypotheses}
\Crefname{ALC@unique}{Line}{Lines}
\colorlet{texcscolor}{blue!50!black}
\colorlet{texemcolor}{red!70!black}
\colorlet{texpreamble}{red!70!black}
\colorlet{codebackground}{black!25!white!25}
\lstdefinestyle{siamlatex}{%
	style=tcblatex,
	texcsstyle=*\color{texcscolor},
	texcsstyle=[2]\color{texemcolor},
	keywordstyle=[2]\color{texemcolor},
	moretexcs={cref,Cref,maketitle,mathcal,text,headers,email,url},
}
\DeclareTotalTCBox{\code}{ v O{} }
{ 
	fontupper=\ttfamily\color{black},
	nobeforeafter,
	tcbox raise base,
	colback=codebackground,colframe=white,
	top=0pt,bottom=0pt,left=0mm,right=0mm,
	leftrule=0pt,rightrule=0pt,toprule=0mm,bottomrule=0mm,
	boxsep=0.5mm,
	#2}{#1}
\patchcmd\newpage{\vfil}{}{}{}
\newcommand{\nn}{\nonumber}
\newcommand{\abs}[1]{\lvert#1\rvert}
\newcommand{\dual}[2]{\left\langle\,#1,#2\right\rangle}
\newcommand{\Lr}[1]{\left(#1\right)}
\newcommand{\sset}[2]{\{\,#1\,\mid\,#2\,\}}
\newcommand{\nm}[2]{\|\,#1\,\|_{#2}}
\newcommand{\jump}[1]{[\![#1]\!]}
\newcommand{\aver}[1]{\{\!\{#1\}\!\}}
\newcommand{\wnm}[1]{|\!|\!|#1|\!|\!|}
\newcommand{\mc}[1]{\mathcal{#1}}
\newcommand{\mb}[1]{\mathbb{#1}}
\newcommand{\wt}[1]{\widetilde{#1}}
\def\a{a^\eps}
\def\b{b^\eps}
\def\A{\mathcal{A}}
\def\al{\alpha}
\def\eps{\varepsilon}
\def\na{\nabla}
\def\pa{\partial}
\def\lam{\lambda}
\def\Lam{\Lambda}
\def\Om{\Omega}
\def\x{\times}
\def\dx{\,\mathrm{d}x}
\def\ds{\,\mathrm{d}s}
\def\n{\mathbf{n}}
\def\A{\mathcal{A}}
\def\T{\mathcal{T}}
\def\E{\mathcal{E}}
\def\N{\mathcal{N}}
\def\R{\mathbb{R}}
\def\hmm{\textsc{HMM}}
\definecolor{dblue}{rgb}{0, .1, .6}
	\title{A Nitsche Hybrid multiscale method with non-matching grids
	}
	\author{Pingbing Ming\thanks{LSEC, Institute of Computational Mathematics and Scientific/Engineering Computing, AMSS,
			Chinese Academy of Sciences, No. 55, East Road Zhong-Guan-Cun, Beijing 100190, China
			and School of Mathematical Sciences, University of Chinese Academy of Sciences, Beijing 100049, China (\email{mpb@lsec.cc.ac.cn},\email{songsq@lsec.cc.ac.cn}).}
		\and SIQI SONG\footnotemark[2]}
	\title{A Nitsche Hybrid multiscale method with non-matching grids
	}
	\author{Pingbing Ming\thanks{LSEC, Institute of Computational Mathematics and Scientific/Engineering Computing, AMSS,
			Chinese Academy of Sciences, No. 55, East Road Zhong-Guan-Cun, Beijing 100190, China
			and School of Mathematical Sciences, University of Chinese Academy of Sciences, Beijing 100049, China (\email{mpb@lsec.cc.ac.cn},\email{songsq@lsec.cc.ac.cn}).}
		\and SIQI SONG\footnotemark[2]}
\begin{document}
	\maketitle
	
	\begin{tcbverbatimwrite}{tmp_\jobname_abstract.tex}
		\begin{abstract}
			We propose a Nitsche method for multiscale partial differential equations, which retrieves the macroscopic information and the local microscopic information at one stroke. We prove the convergence of the method for second order elliptic problem with bounded and measurable coefficients. The rate of convergence may be derived for coefficients with further structures such as periodicity and ergodicity. Extensive numerical results confirm the theoretical predictions.
		\end{abstract}
		
		\begin{keywords}
			Multiscale PDE, Hybrid method, Nitsche variational formulation, Non-matching grid
		\end{keywords}

	\end{tcbverbatimwrite}
			\begin{abstract}
			We propose a Nitsche method for multiscale partial differential equations, which retrieves the macroscopic information and the local microscopic information at one stroke. We prove the convergence of the method for second order elliptic problem with bounded and measurable coefficients. The rate of convergence may be derived for coefficients with further structures such as periodicity and ergodicity. Extensive numerical results confirm the theoretical predictions.
		\end{abstract}
		
		\begin{keywords}
			Multiscale PDE, Hybrid method, Nitsche variational formulation, Non-matching grid
		\end{keywords}


	%

\section{Introduction}
Consider the elliptic problem with Dirichlet boundary condition
\begin{equation}\label{eq:prob}
	\left\{\begin{aligned}
		-\text{div}\bigl(\a(x)\nabla u^{\epsilon}(x)\bigr)&=f(x),\qquad  &&x\in D\subset \mathbb{R}^n,\\
		u^{\epsilon}(x)&=0, \qquad &&x\in\pa D,
	\end{aligned}
	\right.
\end{equation}
where $D$ is a bounded domain in $\mb{R}^n$ with $n=2,3$, and $\eps$ is a small parameter that signifies the multiscale nature of the problem. Problem~\eqref{eq:prob} may be viewed as a prototypical model of many multiscale procblems arising from a variety of contexts, such as the heat conduction and the electromagnetism in composites, or the transport of the porous media. The main quantities of interest for Problem~\eqref{eq:prob} are the macroscopical behavior of the solution and the local microscopical information of the solution~\cite{E:2011, Babuska:2014}. Many numerical methods have been developed in the literature to capture either the macroscopical behaviors or the microscopical information of the solution, such as the heterogeneous multiscale methods (HMM)~\cite{EEnquist:2003}, the multiscale finite element methods~\cite{HouWu:1997} and many others.

There are also some methods that aim to retrieve the coarse scale information and the local fine scale information
simultaneously. Such methods may be roughly grouped into three classes. The first one is the global-local method, which was originally proposed in~\cite{OdenVemaganti:2000, OdenVemaganti:2001}. The main idea is to solve the coarse scale problem by a numerical upscaling method firstly, and then solve the local problem around the defects or the places for which the fine scale information is of interest, while the coarse scale information is employed as the constraints. This idea has been incorporated into the HMM framework in~\cite{EEnquist:2003} and the performance has been thoroughly analyzed in~\cite{EMingZhang:2005} and~\cite{BabuskaLipton:2011}. The global-local method has been extended to solve an elastodynamical wave equation in~\cite{Babuska:2014}.

The second method is based on the domain decomposition idea, which has been exploited to solve the multiscale PDEs
in~\cite{Glowinski:03, Pironneau:2007, DengWu:2014, AbdulleJecker:2015, XuMing:2016}. The most relevant is the one in~\cite{AbdulleJecker:2015}. The authors therein used a discontinuous Galerkin HMM in a region with scale separation, while use a finite element method in a region without scale separation. The unknown boundary condition has been supplied by minimizing the difference between the solutions in the overlapped region. The well-posedness and the convergence of this method have been studied in~\cite{AbdulleJecker:2015} for the periodic media.

The third method relies on the hybridization idea~\cite{HuangLuMing:2018}. One solves the following variational problem:
Find $v_h\in X_h$ such that
\begin{equation}\label{eq:hyproblem}
	\dual{b^\eps\na v_h}{\na w}=\dual{f}{w}\qquad \text{for all\quad} w\in X_h,
\end{equation}
where $X_h$ is any finite element space, and we denote the $L^2(D)$ inner product by $\dual{\cdot}{\cdot}$.  Here  
\(
\b(x){:}=\rho(x)\a(x)+(1-\rho(x))\A(x),\)
where $\A$ is the effective matrix arising from the homogenization problem:
\begin{equation}\label{eq:homoprob}
	\left\{\begin{aligned}
		-\text{div}(\mathcal{A}(x) \na u_0)&=f(x),\qquad  &&x\in D,\\
		u_0&=0, \qquad &&x\in\pa D.
	\end{aligned}
	\right.
\end{equation}
The coefficient $\b$ is a hybridization of the microscopical coefficient and the macroscopical coefficient with a transition function $\rho$. Roughly speaking, the transition function takes one in the defected region and zero otherwise. The authors proved the well-posedness and the convergence of~\eqref{eq:hyproblem} for the bounded and measurable coefficient $\a$. The rate of convergence was derived for the periodic media and the quasi-periodic media. Numerical results in~\cite{HuangLuMing:2018} show that this hybrid method is comparable with the classical global-local method in terms of both the accuracy and the efficiency, while it is particularly suitable for the scenario that the microscopic coefficient $\a$ is only available in part of the domain, while outside this region, the coarse scale information is available for the coefficient fields.

The present work is a follow-up of~\cite{HuangLuMing:2018}, and there are two contributions. Firstly we employ the variational formulation of Nitsche~\cite{Nitsche:1971} to solve~\eqref{eq:hyproblem}, which allows for non-matching grid across the interface. Such numerical interface is caused by the local support of the transition function. The authors in~\cite{HuangLuMing:2018} employed the linear finite element over a body-fitted mesh to solve~\eqref{eq:hyproblem}. Highly refined mesh has to be used around the defect region to ensure the conformity of the mesh and the resolution of the local defects. From this aspect of view, the non-matching grid is more flexible in implementation. Indeed, as demonstrated in \S~\ref{sec:numerics}, fewer global degrees of freedom is required to achieve the desired accuracy compared to the original hybrid method. We note that Nitsche's method is a powerful tool to deal with the interface problem in finite element method and the discontinuous Galerkin method; See, e.g., ~\cite{BeckerHansboStenberg:2003,DengWu:2014,SongDengWu:2016,BurmanElfversonHansbo:2019}. Another contribution is a general method to construct the transition function, which is an essential ingredient of the hybrid method while seems missing in~\cite{HuangLuMing:2018}, because only the square defects have been dealt with therein, for which the transition function is a tensor product of a spline function in one dimension. It is nontrivial to find such explicit expression of $\rho$ for defects with irregular shape. 
Once a general transition function is constructed, it is straightforward to handle the defects with irregular shapes, and numerical results show that the method works well for such irregular defects without occurring extra cost.

To analyze the Nitsche hybrid method, we need a well-defined trace over the element boundary, which demands that $u_0\in H^{1+s}(D)$ with $s>1/2$. For smooth solution, the Nitsche hybrid method may be analyzed by combining the technique in~\cite{HuangLuMing:2018} and the standard way for analyzing DG method~\cite{Arnold:2001}. Unfortunately, such smoothness assumption on $u_0$ may not be true for a rough coefficient matrix $\mc{A}$, or a point load function $f$, or a nonconvex domain $D$. Hence we adapt the medius analysis~\cite{Gudi:2010_New} to the present problem. To deal with the non-matching grid that is not covered by the standard medius analysis~\cite{Gudi:2010_New,Gudi:2010_nonstandard,LuthenJuntunenStenberg:2018}, we construct a new enriching operator that measures the difference between the discontinuous finite element space and the Sobolev space $H^1$ over such triangulation. Such difference may be bounded by the jump of the function across the interface, which is independent of the mesh ratio. The enriching operator stems from~\cite{Brenner:1996} and~\cite{Karakshian:2003}, which plays an important role
in analyzing DG method~\cite{Karakshian:2003,Gudi:2010_New}, nonconforming finite element method~\cite{Brenner:1996,LiMingWang:2021}
and the virtual element method~\cite{BrennerSung:2019}, where we just name a few of them and refer to~\cite{BrennerSung:2019}
for an updated review. The main ingredient of the construction is the mesh
ratio dependent weights~\cite{Stenberg:1998,HeinrichPietsch:2002} instead of the standard arithmetic mean~\cite{Karakshian:2003}.
Using this enriching operator, we may prove the error estimate without any
regularity assumption on $u_0$. Though the error bounds weakly depend on the
mesh ratio, we may remove such dependence by adjusting the penalized parameter in Nitsche's variational formulation. Besides the non-matching grids,
the bounded measurable coefficient $a^\eps$ adds certain new difficulties.

The rest of the paper is organized as follows. We introduce the method in \S~\ref{sec:method}. The well-posedness and the error estimates of the proposed method are proved in \S~\ref{sec:analysis}, this is also the main theoretical result of the present work. We prove the main technical lemmas in \S~\ref{sec:pfaux}. Numerical examples for defects with various shapes are reported in \S~\ref{sec:numerics}. Some technical results are included in the Appendix.

Throughout this paper, we shall use Sobolev spaces $W^{r,p}(D)$  with norm $\nm{\cdot}{r,p,D}$ and semi-norm $\abs{\cdot}_{r,p,D}$, and we shall drop the  subscript $p$ when $p=2$. We refer to~\cite{AdamsFournier:2003} for details. We shall use $C$ as a generic constant independent of $\eps$, the mesh size $h$, $H$ and $H/h$, which may change from line to line.
\section{The Nitsche Hybrid Method}\label{sec:method}
To introduce the method, we fix some notations. Let $K_0$ be the defected region, and we slightly extend $K_0$ to $K_1$ and define $d{:}=\text{dist}(K_0,K_1)$. Denote $K_2=D\setminus K_1$, and let $\abs{K_i}{:}=\text{mes}K_i$ with $i=1,2$ and $\Gamma{:}=\pa {K_1}\setminus\pa D$. We construct a transition function $\rho$ satisfying
\[
\left\{\begin{aligned}
	K_1&=\text{supp}\;\rho,\qquad &&0\le\rho \le 1,\\
	\rho(x)&\equiv 1\quad&&\text{for\;}x\in K_0.
\end{aligned}\right.
\]
To this end, we firstly set $\rho\equiv 1$ in $K_0$ and $\rho\equiv 0$ outside $K_1$. A one layer mesh is constructed between $K_0$ and $K_1$. Secondly, we use a linear Lagrange interpolant over this triangulation to generate the transition function $\rho$ in $K_1\setminus K_0$, which is a continuous function. The triangulation between $K_0$ and $K_1$ for three different kinds of defects is plotted in Fig.~\ref{fig:rho}.
\begin{figure}[htbp]
	\centering
	{\includegraphics[width=3.7cm]{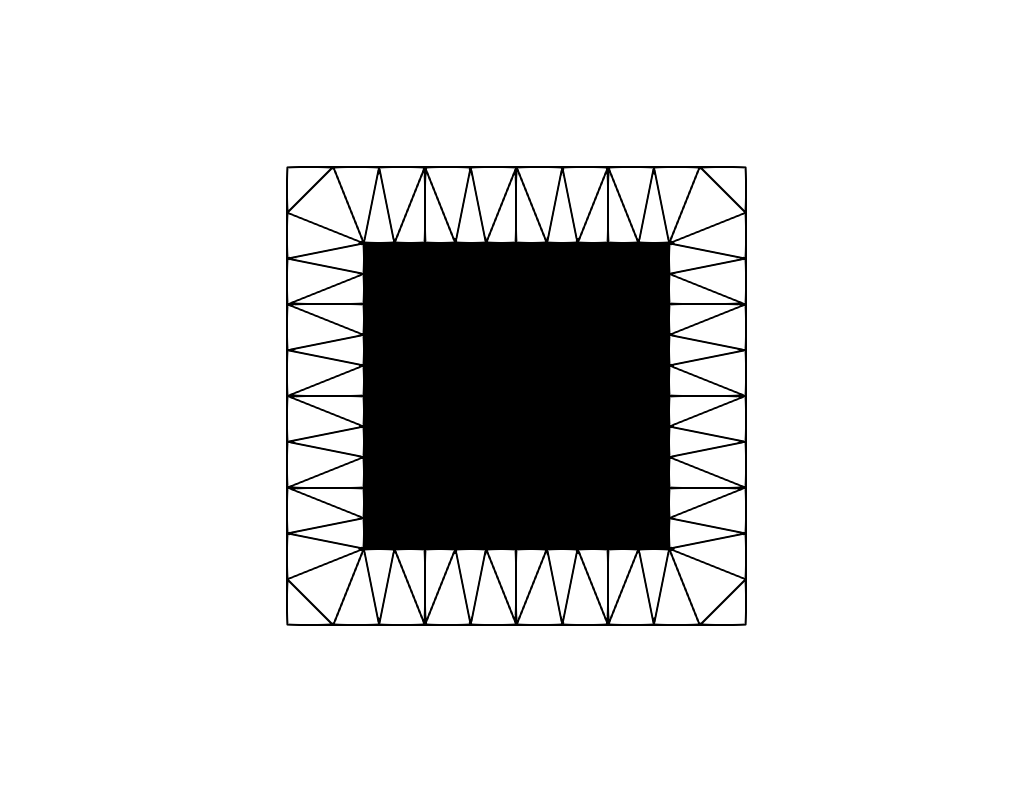}}
	{\includegraphics[width=3.7cm]{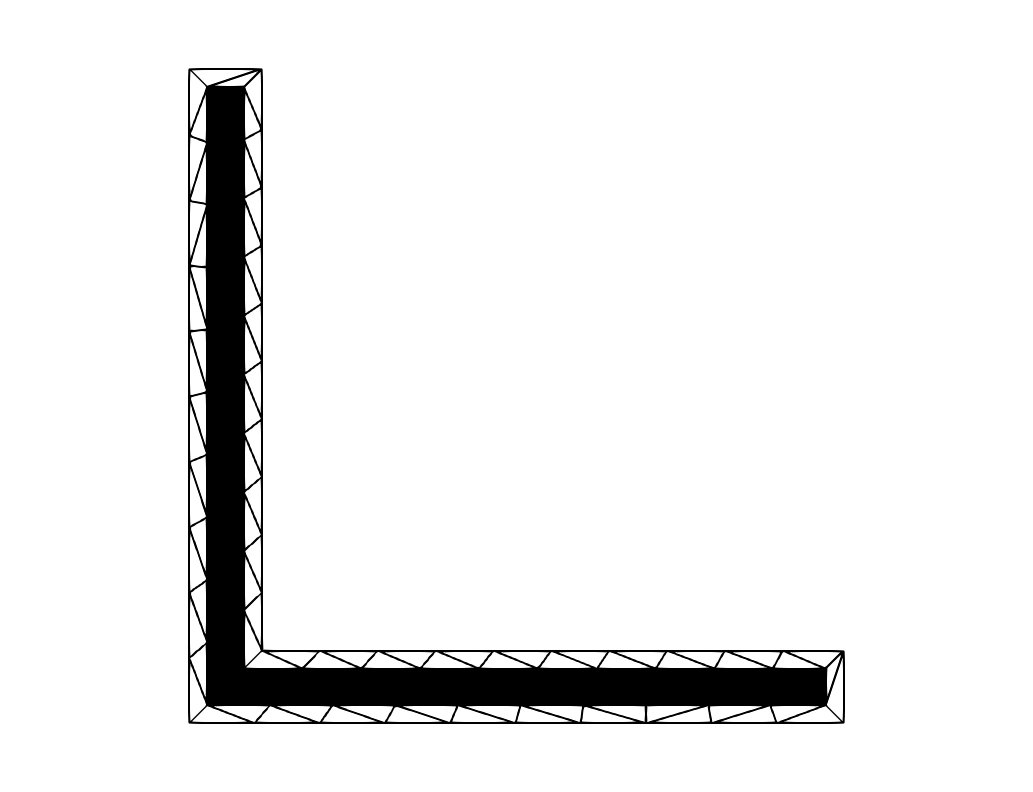}}
	{\includegraphics[width=3.7cm]{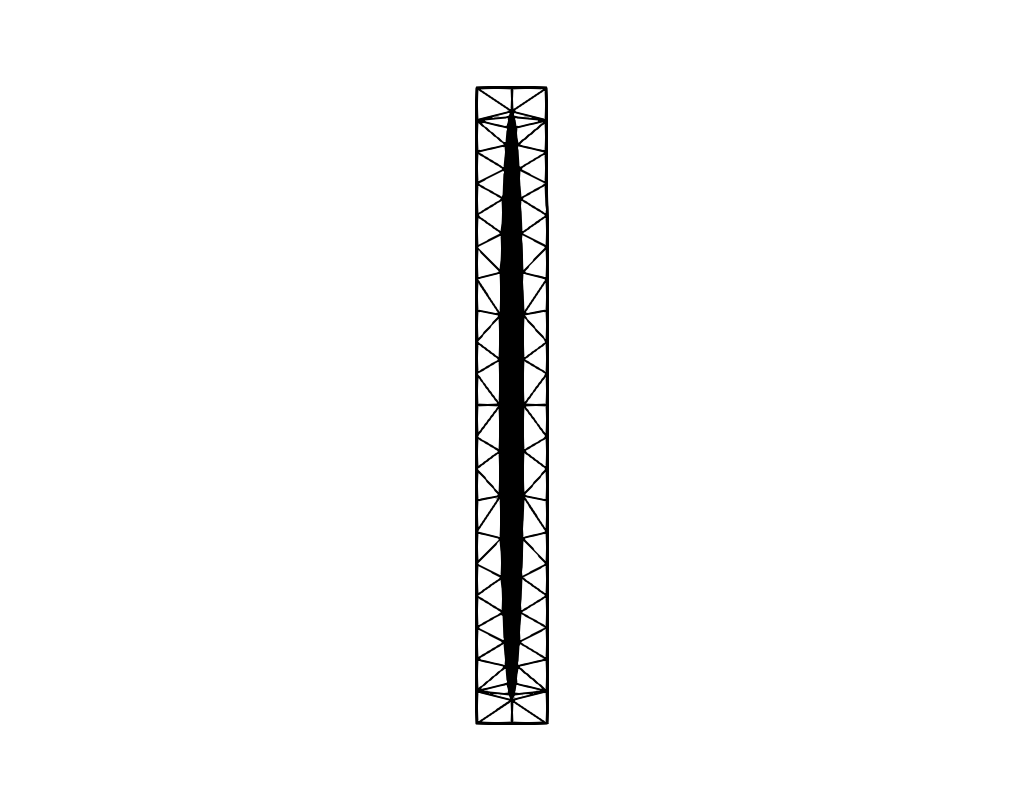}}
	\caption{One layer triangulation of (a) well defect; (b) channel defect; (c) Ellipse defect.}
	\label{fig:rho}
\end{figure}
This construction ensures that $\rho(x)=0$ for $x\in\Gamma$. In what follows, we do not assume any further smoothness on $\rho$. The effect of the smoothness on $\rho$ will be studied in \S~\ref{sect:rho}.

We triangulate $K_1$ and $K_2$ by $\T_h$ and $\T_H$ with the maximum meshsize $h$ and $H$, respectively. Hence we have a global triangulation $\T_{h,H}=\T_h\cup\T_H$ over the whole domain though $\T_h$ and $\T_H$ may not be conforming; See Fig.~\ref{fig:regionmesh} for an illustration of $\mc{T}_{h,H}$. We assume that both $\mc{T}_h$ and $\mc{T}_H$ are shape-regular in the sense of Ciarlet-Raviart~\cite{Ciarlet:1978} with the chunkiness parameter $\sigma$.
\begin{figure}[tbhp]
	\centering
	{\includegraphics[width=5.8cm]{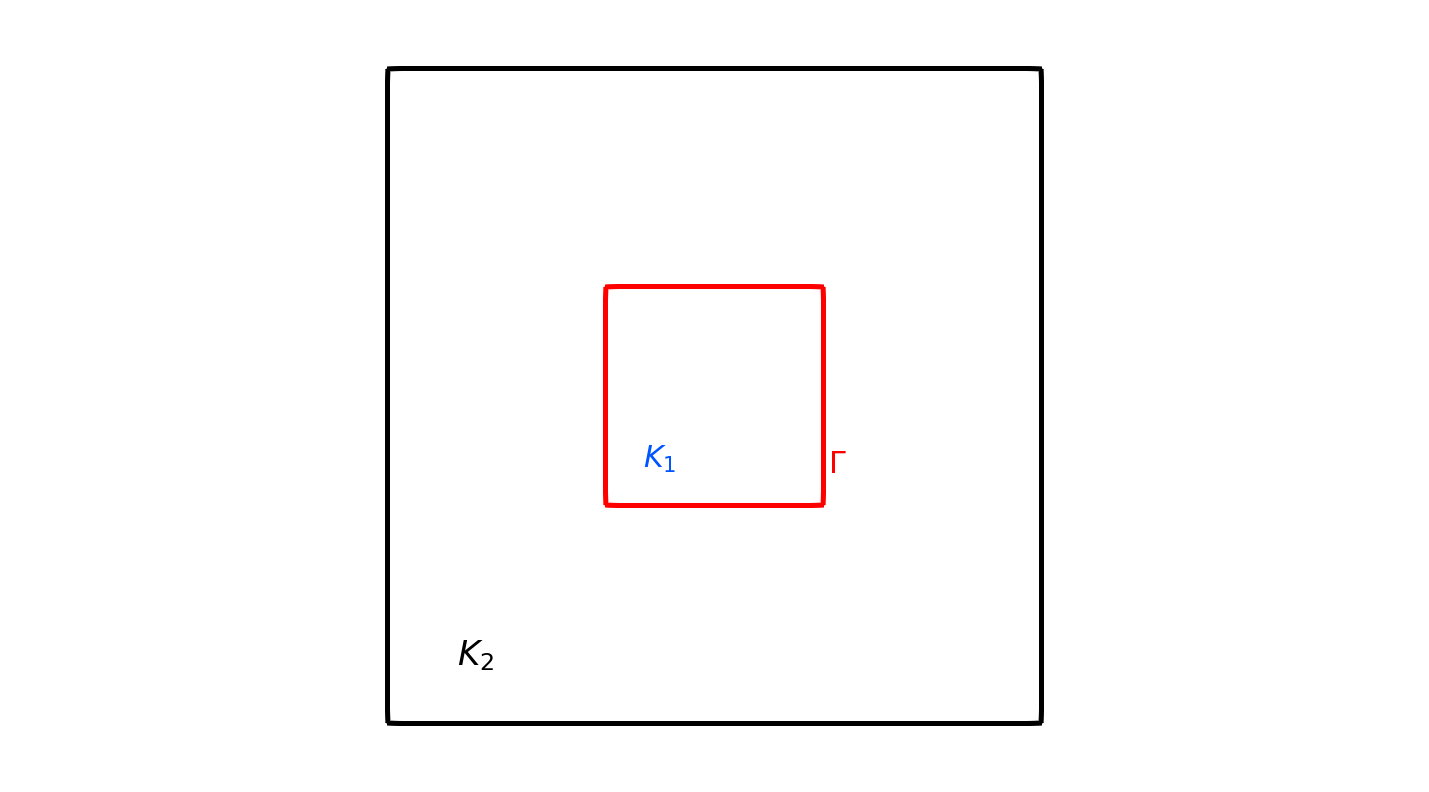}}
	{\includegraphics[width=5.8cm]{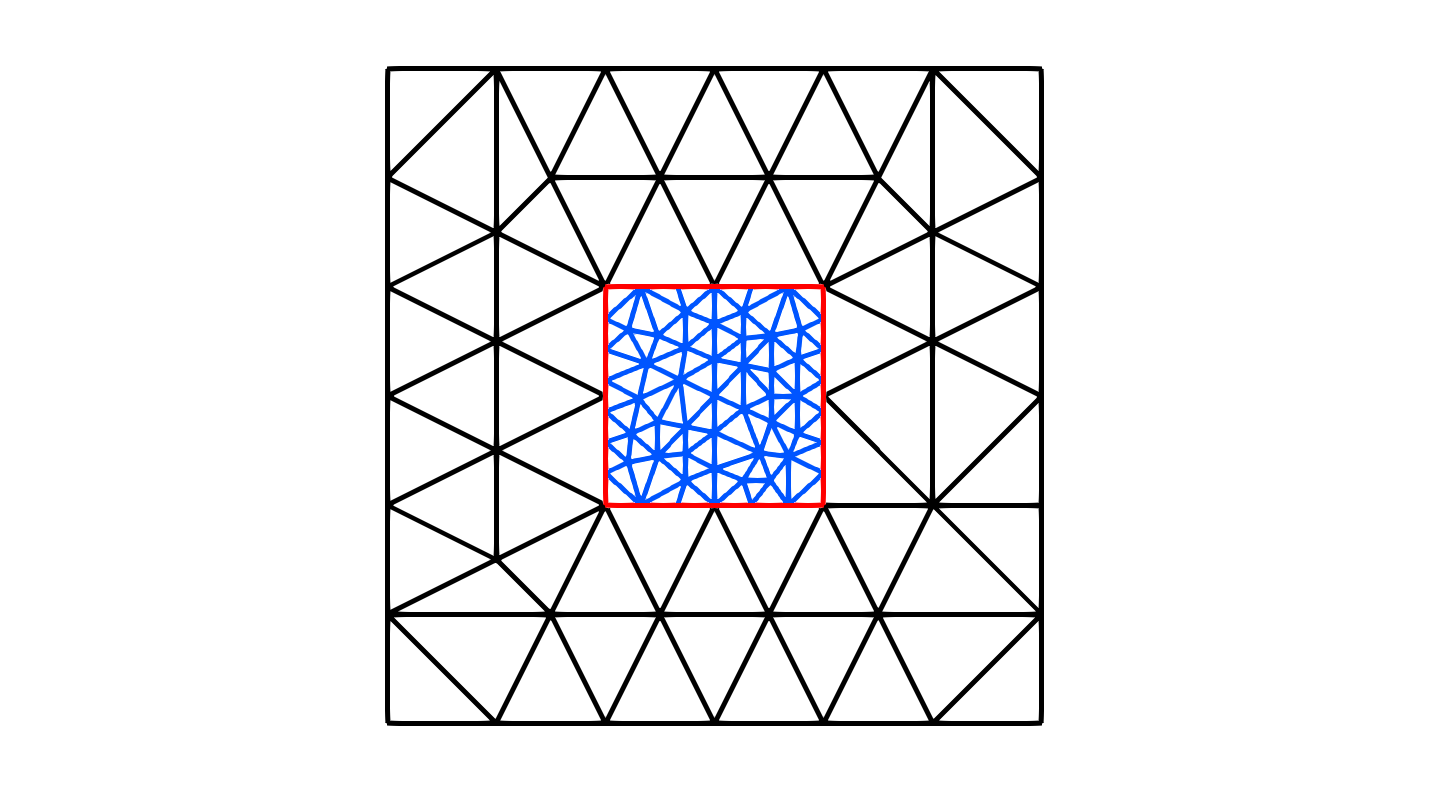}}
	\caption{Left: the region $K_1$ and $K_2$; Right: The mesh $\T_h$ in $K_1$ (blue), $\T_H$ in $K_2$ (black) and the interface $\Gamma$ (red).}
	\label{fig:regionmesh}
\end{figure}

Denote the set of all edges (faces in $n=3$) of $\T_h$ and $\T_H$ on $\Gamma$ by $\E_h$ and $\E_H$, respectively. Moreover, we denote by $\E_{\cap}$ the boundary mesh obtained by intersecting $\E_h$ and $\E_H$, i.e., $\E_{\cap}=\{e_\cap=e\cap E: e\in\E_h, E\in\E_H \}$. For convenience, we define
\[
\T_h^\Gamma{:}=\{\tau\in \mathcal{T}_{h}: \overline{\tau}\cap\Gamma\neq\emptyset\}\quad\text{and\quad}
\T_H^\Gamma{:}=\{\tau\in\mathcal{T}_{H}: \overline{\tau}\cap\Gamma\neq\emptyset \},
\]
and $\T_{h,H}^\Gamma{:}=\T_h^\Gamma\cup\T_H^\Gamma$. We refer to Fig.~\ref{fig:Gamma} for a plot of the mesh around the interface. 	
\begin{figure}[h]
	\centering
	\includegraphics{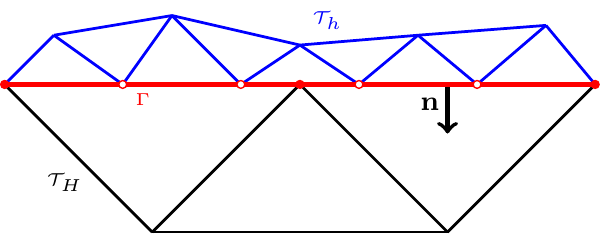}
	\caption{The collection $\T_h^\Gamma$ (blue) and $\T_H^\Gamma$ (black).}
	\label{fig:Gamma}
\end{figure}

Let $X_h$ and $X_H$ be the Lagrange finite element spaces consisting of piecewise polynomials of degree $r$ over $\mathcal{T}_h$ and $\mathcal{T}_H$, respectively. Over $\mathcal{T}_{h,H}$, we define
\begin{equation}\label{eq:space}
	X_{h,H}{:}=X_h\times X_H\quad \text{and}\quad
	X_{h,H}^0{:}=\{v\in X_{h,H}: v\arrowvert_{\pa D}=0\}.
\end{equation}

For each $e\in\E_\cap$,  there exists $(\tau_1,\tau_2)\in\T_h^\Gamma\times\T_H^\Gamma$ such that $\bar{\tau}_1\cap \bar{\tau}_2=e$. Define $h_e{:}=h_{\tau_1}$ and $H_e{:}=h_{\tau_2}$, and the weighted average and jump for $v=(v_1,v_2)\in X_h\times X_H$  on $e$  are defined by
\[
\aver{v}_\omega{:}=\omega_1v_1+\omega_2v_2\quad\text{and}\quad \jump{v}{:}=v_1-v_2
\]
with the weights
\[
\omega_1=\frac{h_e}{h_e+H_e}\quad \text{and} \quad  \omega_2=\frac{H_e}{h_e+H_e}.
\]
Similar to the magic DG formula in~\cite{Arnold:2001}, we have
\begin{equation}\label{eq:magic}
	\aver{v}_\omega\jump{w}-\jump{vw}=-\jump{v}\aver{w}^\omega,
\end{equation}
where $\aver{v}^\omega{:}=\omega_2v_1+\omega_1v_2$.

The bilinear form is defined for any $v,w\in X_{h,H}$ as
\begin{align*}
	B^\eps_h(v,w){:}&=\sum_{i=1}^2\int_{K_i}\b_h\na v\na w\dx
	-\sum_{e\in \E_\cap}\int_e\aver{\b_h\na v\cdot \n}_\omega\jump{w}\ds\\
	&\quad-\sum_{e\in \E_\cap}\int_e\aver{(\b_h)^{T}\na w\cdot \n}_\omega\jump{v}\ds+\sum_{e\in\E_\cap}\int_e\frac{\gamma}{H_e+h_e}\jump{v}\jump{w}\ds,
\end{align*}
where $\n$ is the unit outward normal vector of $\Gamma$ from $K_1$ to $K_2$, and $\gamma$ is the penalized parameter. Here
\begin{equation}\label{eq:aphyco}
	\b_h(x){:}=\rho(x)\a(x)+(1-\rho(x))\A_h(x)
\end{equation}
with $\A_h$ an approximation of $\A$. To step further, we assume that $\a$ belongs to a set $\mathcal{M}(\lam,\Lam;D)$  defined by
\begin{equation*}
	\begin{split}
		\mathcal{M}(\lam,\Lam;D){:}=\{a\in [L^\infty(D)]^{n\x n}\ :& \ \xi\cdot a(x)\xi\geq\lam\abs{\xi}^2,\ \xi\cdot a(x)\xi\geq(1/\Lam)\abs{a(x)\xi}^2\\
		&\text{for any~}\xi\in\mathbb{R}^n \text{~and~}a.e. \ x \text{~in~} D\},	
	\end{split}
\end{equation*}
where $\abs{\cdot}$ denotes the Euclidean norm in $\mathbb{R}^n$. By the theory of H-convergence~\cite{Tartar:2009}, we have $\A\in\mc{M}(\lam,\Lam;D)$. For any reasonable approximation $\mc{A}_h$, we may assume that the hybrid coefficient $\b_h\in\mc{M}(\lam',\Lam';D)$ for certain positive constants $\lam'$ and $\Lam'$. For example, if we use HMM~\cite{HMMreview1,HMMreview2, EMingZhang:2005,YueE:2007} to compute the effective matrix, then $\A_h\in\mc{M}(\lam,\Lam;D)$. Hence $\b_h\in\mc{M}(\lam,\Lam;D)$. If we use some other numerical upscaling methods, e.g.,~\cite{Gloria:2011, LiMingTang:2012, Gloria:2016, HuangMingSong:2020}, then $\b_h\in\mc{M}(\lam',\Lam';D)$ with certain constants $\lam'$ and $\Lam'$, which depend on $\lam$ and $\Lam$, but not exactly the same. To quantity the approximation error for the effective matrix, we define $e(\textsc{HMM}){:}=\max_{x\in \bar{K}_2}\nm{(\A-\A_h)(x)}{F}$, where $\nm{\cdot}{F}$ is the Frobenius norm of a matrix.

The approximation problem is defined as: Find $v_h\in X_{h,H}^0$ such that
\begin{equation}\label{eq:disvar}
	B^\eps_h(v_h,w)=\dual{f}{w}\quad \text{for all\quad} w\in X_{h,H}^0.
\end{equation}
\section{Error Estimate for the Nitsche Hybrid Method}\label{sec:analysis}
\subsection{Accuracy for retrieving the macroscopic information}
In this part, we estimate the error between the hybrid solution and the homogenized solution. For any $v\in X_{h,H}^0$, we define the broken energy norm as
\begin{equation}\label{eq:norm}
	\wnm{v}:=\left(\sum_{i=1}^2\abs{ v}_{1,K_i}^2+\sum_{e\in \E_\cap}\frac{\gamma}{H_e+h_e}\nm{\jump{v}}{0,e}^2\right)^{1/2}.
\end{equation}
The following lemma gives the continuity and coercivity of the bilinear form $B^\eps_h$ with respect to the above broken energy norm.
\begin{lemma}\label{lemma:coer}
Assume that $\b_h$  is in $\mathcal{M}(\lam',\Lam'; D)$.  Let $\gamma_0=8\Lam'^2C_{\text{inv}}^2/\min(1,\lam'^2)$, with $C_{\text{inv}}$ in~\eqref{eq:traceinv}.  If $\gamma\geq\gamma_0$, for all $v,w\in X_{h,H}$, then
	\begin{align}
		\abs{B^\eps_h(v,w)}&\le 2\max(1,\Lam')\wnm{v}\wnm{w},\label{eq:cont}\\
		B^\eps_h(v,v)&\ge\min\Lr{1/2,\lam'/2}\wnm{v}^2,\label{eq:coer}
	\end{align}
	where the constant $C_{\text{inv}}$ depends only on $r$ and $\sigma$ such that
	\begin{equation}\label{eq:traceinv}
		\nm{v}{0,e}\le C_{\text{inv}}\abs{h_\tau}^{-1/2}\nm{v}{0,\tau}\quad \text{for all\quad}v\in P_{r}(\tau),\;e\subset\pa\tau,\tau\in\mc{T}_{h,H}.
	\end{equation}
\end{lemma}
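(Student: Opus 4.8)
The plan is to establish continuity and coercivity by the standard DG strategy, treating the four terms of $B^\eps_h$ separately and reducing the consistency (average) terms to the penalty term via a weighted trace-inverse inequality. The only genuinely nonstandard feature is the non-matching grid, which is handled entirely through the mesh-ratio-dependent weights $\omega_1,\omega_2$; these are designed so that the weighted average of a normal flux can be controlled without any dependence on $H/h$.

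First I would treat the volume terms. For continuity, $\sum_i\int_{K_i}\b_h\na v\na w\dx$ is bounded by $\Lam'\sum_i\abs{v}_{1,K_i}\abs{w}_{1,K_i}\le\Lam'\wnm{v}\wnm{w}$ using the upper bound $\abs{\b_h\xi}\le\Lam'\abs{\xi}$ encoded in $\mc{M}(\lam',\Lam';D)$ together with Cauchy--Schwarz. For coercivity, the lower bound $\xi\cdot\b_h\xi\ge\lam'\abs{\xi}^2$ gives $\sum_i\int_{K_i}\b_h\na v\na v\dx\ge\lam'\sum_i\abs{v}_{1,K_i}^2$. The last (penalty) term is by definition exactly $\sum_{e}\frac{\gamma}{H_e+h_e}\nm{\jump{v}}{0,e}^2$, so it contributes the jump part of $\wnm{v}^2$ directly.

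The main work, and the step I expect to be the obstacle, is bounding the two consistency terms $\sum_e\int_e\aver{\b_h\na v\cdot\n}_\omega\jump{w}\ds$ uniformly in the mesh ratio. On an intersected edge $e=\bar\tau_1\cap\bar\tau_2$ I would write the weighted average as $\omega_1\,\b_h\na v_1\cdot\n+\omega_2\,\b_h\na v_2\cdot\n$, apply Cauchy--Schwarz on $e$, and then invoke the trace-inverse inequality~\eqref{eq:traceinv} on each element separately: the $\tau_1$ contribution carries a factor $h_e^{-1/2}$ and the $\tau_2$ contribution a factor $H_e^{-1/2}$. The crucial algebraic point is that the weights cancel the ``wrong'' scale, namely $\omega_1^2 h_e^{-1}=h_e/(h_e+H_e)^2\le (h_e+H_e)^{-1}$ and likewise $\omega_2^2 H_e^{-1}\le(h_e+H_e)^{-1}$, so that after summing we obtain
\[
\sum_e\int_e\aver{\b_h\na v\cdot\n}_\omega\jump{w}\ds\le \Lam' C_{\text{inv}}\Lr{\sum_i\abs{v}_{1,K_i}^2}^{1/2}\Lr{\sum_e\frac{1}{H_e+h_e}\nm{\jump{w}}{0,e}^2}^{1/2},
\]
where the second factor is exactly $\gamma^{-1/2}$ times the jump seminorm in $\wnm{w}$. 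This bound is independent of $H/h$ precisely because of the weighting, which is why the enriching-operator construction advertised in the introduction can proceed without a mesh-ratio penalty.

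Combining the three estimates gives~\eqref{eq:cont}, after absorbing the $\gamma^{-1/2}$ factors (which are bounded since $\gamma\ge\gamma_0$) into the stated constant $2\max(1,\Lam')$. For coercivity~\eqref{eq:coer}, the symmetric structure makes the two consistency terms equal when $v=w$, so I would bound their sum by $2\Lam' C_{\text{inv}}\lr{\sum_i\abs{v}_{1,K_i}^2}^{1/2}\gamma^{-1/2}\lr{\sum_e\frac{\gamma}{H_e+h_e}\nm{\jump{v}}{0,e}^2}^{1/2}$ and then apply Young's inequality $ab\le\theta a^2/2+b^2/(2\theta)$ with $\theta$ chosen so that the $\abs{v}_{1,K_i}^2$ part is absorbed by at most half of the volume term. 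Tracking constants, the threshold $\gamma_0=8\Lam'^2C_{\text{inv}}^2/\min(1,\lam'^2)$ is exactly what makes the leftover penalty contribution nonnegative while leaving $\min(1/2,\lam'/2)$ in front of $\wnm{v}^2$; verifying that this specific constant closes the Young's-inequality bookkeeping is the delicate part, but it is a routine (if careful) computation once the mesh-ratio-free flux bound above is in hand.
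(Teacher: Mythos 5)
Your proposal is correct, and it is precisely the standard DG continuity/coercivity argument that the paper itself invokes without proof ("the proof is standard and we refer to~\cite{Arnold:2001} for details"), adapted to non-matching grids via the mesh-ratio-dependent weights. Your key computation — that $\omega_1^2h_e^{-1}$ and $\omega_2^2H_e^{-1}$ are both bounded by $(h_e+H_e)^{-1}$, so the weighted flux average is controlled independently of $H/h$ — is the same device the paper uses explicitly in its proof of Theorem~\ref{thm:Macro} (where $\omega_1+\omega_2=1$ and convexity give the clean bound $(H_e+h_e)\nm{\aver{\na\wt{u}}_\omega}{0,e}^2\le h_e\nm{\na\wt{u}_1}{0,e}^2+H_e\nm{\na\wt{u}_2}{0,e}^2$), and the Young's-inequality bookkeeping does close with the stated $\gamma_0$; the only imprecision is the remark that the two consistency terms are "equal" when $v=w$, which requires $\b_h$ symmetric, but since each term obeys the identical bound this does not affect the argument.
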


The existence and uniqueness of the solution of Problem \eqref{eq:disvar} follow from the Lax-Milgram theorem provided that $\gamma\ge\gamma_0$. The proof is standard and we refer to~\cite{Arnold:2001} for details. The explicit form of $C_{\text{inv}}$ may be found in~\cite{WarburtonHesthaven:2003}, which will be used to determine the lower bound $\gamma_0$ of the penalized parameter $\gamma$.

The following inequality slightly extends~\cite[Lemma 3.1]{HuangLuMing:2018}.
\begin{lemma}\label{lema:smalldomain}
For any $v\in H^s(D)$ with a positive number $s$, and for any subset $\Om\subset D$, there exists $C$ independent of~$\Om$ such that
	\begin{equation}\label{eq:smalldomain}
		\nm{v}{0,\Om}\le C\abs{\Om}^{\theta}\eta(\Om)\nm{v}{s,D},
	\end{equation}
	where $\theta=\min(s/n,1/2)$ and
	\[
	\eta(\Om){:}=\left\{\begin{aligned}
		\abs{\ln\abs{\Om}}^{1/2}\qquad&\text{if\;}n=2,3\text{\;and\;}s=n/2,\\		
		1\qquad&\text{otherwise}.
	\end{aligned}\right.
	\]
\end{lemma}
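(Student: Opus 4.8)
The plan is to bound $\nm{v}{0,\Om}$ by combining H\"older's inequality on $\Om$ with the Sobolev embedding of $H^s(D)$ into a suitable Lebesgue space $L^q(D)$, treating the three regimes $s>n/2$, $s<n/2$, and the critical value $s=n/2$ separately. We may restrict to the case $\abs{\Om}$ small, which is the regime of interest; for $\abs{\Om}$ bounded away from $0$ the claim follows from the trivial bound $\nm{v}{0,\Om}\le\nm{v}{0,D}$ together with the continuity of $H^s(D)\hookrightarrow L^2(D)$. The workhorse is the interpolation inequality, valid for every $q\ge 2$,
\[
\nm{v}{0,\Om}\le\abs{\Om}^{1/2-1/q}\nm{v}{L^q(\Om)}\le\abs{\Om}^{1/2-1/q}\nm{v}{L^q(D)},
\]
which reduces matters to controlling $\nm{v}{L^q(D)}$ and then choosing $q$ as a function of $\abs{\Om}$.

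In the supercritical range $s>n/2$ we have $\theta=1/2$, and since $H^s(D)\hookrightarrow L^\infty(D)$ the crude estimate $\nm{v}{0,\Om}\le\abs{\Om}^{1/2}\nm{v}{L^\infty(D)}\le C\abs{\Om}^{1/2}\nm{v}{s,D}$ already gives the conclusion with $\eta\equiv 1$. In the subcritical range $s<n/2$ we have $\theta=s/n$ and $H^s(D)\hookrightarrow L^{q^*}(D)$ for the critical exponent $q^*=2n/(n-2s)$; inserting $q=q^*$ into the displayed inequality and computing $1/2-1/q^*=s/n$ yields $\nm{v}{0,\Om}\le C\abs{\Om}^{s/n}\nm{v}{s,D}$, again with $\eta\equiv 1$.

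The delicate case is the borderline exponent $s=n/2$, for which $\theta=1/2$ but $H^{n/2}(D)$ no longer embeds into $L^\infty(D)$. The key input is the sharp growth of the embedding constant in $q$, namely $\nm{v}{L^q(D)}\le C\sqrt{q}\,\nm{v}{s,D}$ for all $q\in[2,\infty)$ with $C$ independent of $q$, a borderline Sobolev (Trudinger--Moser type) estimate. Substituting this into the displayed inequality gives $\nm{v}{0,\Om}\le C\sqrt{q}\,\abs{\Om}^{1/2}\abs{\Om}^{-1/q}\nm{v}{s,D}$, and writing $\abs{\Om}^{-1/q}=\exp(\abs{\ln\abs{\Om}}/q)$ we optimize the free parameter $q$: choosing $q=\abs{\ln\abs{\Om}}$ (legitimate once $\abs{\Om}$ is small enough that this exceeds $2$) forces $\abs{\Om}^{-1/q}=e$ while $\sqrt{q}=\abs{\ln\abs{\Om}}^{1/2}$, producing exactly the logarithmic factor $\eta(\Om)=\abs{\ln\abs{\Om}}^{1/2}$.

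The main obstacle is securing the precise $\sqrt{q}$ dependence of the embedding constant in the critical case, since an inaccurate power here would change the exponent of the logarithm; once this sharp embedding is in hand, the optimization over $q$ is elementary and the independence of $C$ from $\Om$ is automatic, as every constant above depends only on $D$, $n$, and $s$, and the chosen $q$ depends on $\Om$ solely through $\abs{\Om}$.
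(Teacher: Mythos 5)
Your proof is correct and follows essentially the same route as the paper: Hölder's inequality plus the subcritical Sobolev embedding for $s<n/2$, the $L^\infty$ embedding for $s>n/2$, and in the critical case $s=n/2$ the borderline embedding $\nm{v}{L^q(D)}\le C\sqrt{q}\,\nm{v}{s,D}$ followed by the choice $q=\abs{\ln\abs{\Om}}$. The only difference is presentational: the paper delegates the subcritical case and the $n=2$, $s=1$ critical case to \cite{HuangLuMing:2018} and invokes Kozono's theorem for $n=3$, $s=3/2$, whereas you carry out the identical arguments explicitly and treat both critical cases uniformly.
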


\begin{proof}
If $0<s<n/2$, then we proceed along the same line that leads to~\cite[Lemma 3.1]{HuangLuMing:2018} and obtain~\eqref{eq:smalldomain} with $\theta=s/n$.

For $s>n/2$, we use the embedding $H^s(D)\hookrightarrow L^\infty(D)$ and obtain
\[
	\nm{v}{0,\Om}\le\abs{\Om}^{1/2}\nm{v}{L^\infty(\Om)}\le\abs{\Om}^{1/2}\nm{v}{L^\infty(D)}\le C\abs{\Om}^{1/2}\nm{v}{s,D},
\]
where $C$ depends on $D$ but independent of $\Om$.
	
It remains to deal with $s=n/2$. The case $n=2,s=1$ has been proved in~\cite[Lemma 3.1]{HuangLuMing:2018}. The case $n=3,s=3/2$ may be proved as follows. Using~\cite[Theorem 2.1]{Kozono:2008}\footnote{The authors therein only considered $D=\R^n$, while the proof may be extended to the domain $D$ by Stein extension as in~\cite{AdamsFournier:2003}.}, for any $p>2$, there exists $C$ depending only on $D$ such that
\[
	\nm{v}{L^p(D)}\le C\sqrt{p}\nm{v}{3/2,D}.
\]
Therefore,
\begin{align*} \nm{v}{0,\Om}&\le\abs{\Om}^{1/2-1/p}\nm{v}{L^p(\Om)}\le\abs{\Om}^{1/2-1/p}\nm{v}{L^p(D)}\\
&\le C\abs{\Om}^{1/2-1/p}\sqrt{p}\nm{v}{3/2,D}.
\end{align*}

Taking $p=\abs{\ln\abs{\Om}}$ in the right-hand side of the above inequality, we obtain~\eqref{eq:smalldomain} for $n=3$ and $s=3/2$.
	
A combination of the above cases implies~\eqref{eq:smalldomain}.
\end{proof}

The following result concerns the accuracy of the method approximating the homogenized solution $u_0$ when $u_0$ is smooth.
\begin{theorem}\label{thm:Macro}
	Let $u_0$ and $v_h$ be the solutions of Problem~\eqref{eq:homoprob} and Problem~\eqref{eq:disvar}, respectively. If $u_0\in H^{1+s}(D)$ with $s>1/2$, and $\psi_g\in H_0^1(D)$ is the unique solution of the adjoint variational problem:
	\begin{equation}\label{eq:dual}
		\langle \A\na v,\na \psi_g\rangle=\langle g,v\rangle\qquad \text{for all\quad} v\in H_0^1(D),
	\end{equation}
	which satisfies the regularity estimate
	\begin{equation}\label{eq:reg}
		\nm{\psi_g}{2,D}\le C\nm{g}{0,D}.
	\end{equation}
	Then there exists $C$ depending on $D,\lam$ and $\Lam$ such that,
	\begin{equation}\label{eq:macroH1}
		\wnm{u_0-v_h}\le C\Lr{\inf_{\chi\in X_{h,H}^0}\wnm{ u_0-\chi}+\abs{K_1}^{\theta}\eta(K_1)+e(\textsc{HMM})},
	\end{equation}
	and
	\begin{equation}\label{eq:macroL2}
		\begin{aligned}
			\nm{u_0-v_h}{0,D}&\le C\Lr{\inf_{\chi\in X_{h,H}^0}\wnm{ u_0-\chi}+\abs{K_1}^{\theta}\eta(K_1)}\Lr{h+H+\abs{K_1}^{1/n}\eta(K_1)}\\
			&\quad+Ce(\textsc{HMM}).
		\end{aligned}
	\end{equation}
	
\end{theorem}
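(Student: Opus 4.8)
The plan is a Strang-type argument for \eqref{eq:macroH1} and an Aubin--Nitsche duality for \eqref{eq:macroL2}, with Lemma~\ref{lema:smalldomain} supplying the small-domain factors $\abs{K_1}^\theta\eta(K_1)$ and the quantity $e(\textsc{HMM})$ controlling the inexact effective matrix. The first step is a consistency identity. Since $u_0\in H^{1+s}(D)\cap H_0^1(D)$ with $s>1/2$, the traces of $u_0$ and $\na u_0$ are well defined on $\Gamma$ and $\jump{u_0}=0$. Integrating $-\text{div}(\A\na u_0)=f$ by parts over each $K_i$ and using that the exact flux $\A\na u_0\cdot\n$ is single-valued across $\Gamma$ (so its weighted average coincides with its value), the $\A$-contributions cancel the consistency and penalty terms of $B^\eps_h$, and together with $B^\eps_h(v_h,w)=\dual{f}{w}$ I obtain
\[
B^\eps_h(u_0-v_h,w)=\sum_{i=1}^2\int_{K_i}(\b_h-\A)\na u_0\cdot\na w\dx-\sum_{e\in\E_\cap}\int_e\aver{(\b_h-\A)\na u_0\cdot\n}_\omega\jump{w}\ds=:\mc{R}(w)
\]
for every $w\in X_{h,H}^0$. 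The structural fact I would exploit is $\b_h-\A=\rho(\a-\A)+(1-\rho)(\A_h-\A)$: the first summand is supported in $K_1$ because $\text{supp}\,\rho=K_1$, whereas on $\Gamma$ one has $\rho=0$, so there $\b_h-\A=\A_h-\A$, which is bounded by $e(\textsc{HMM})$ since $\Gamma\subset\ov{K}_2$.

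Next I would bound $\mc{R}(w)$. In the volume term, Cauchy--Schwarz and the localization of the $\rho$-part to $K_1$ reduce matters to $\nm{\na u_0}{0,K_1}$, which Lemma~\ref{lema:smalldomain} applied to $\na u_0\in H^s(D)$ bounds by $C\abs{K_1}^\theta\eta(K_1)\nm{u_0}{1+s,D}$; the $(1-\rho)(\A_h-\A)$-part yields $Ce(\textsc{HMM})$ on $K_2$ and a further $\abs{K_1}^\theta\eta(K_1)$ contribution on $K_1$. In the edge term I would use the $e(\textsc{HMM})$ bound on $\Gamma$ together with a weighted Cauchy--Schwarz inequality pairing $(H_e+h_e)/\gamma$ with $\nm{\na u_0}{0,e}^2$ and $\gamma/(H_e+h_e)$ with $\nm{\jump{w}}{0,e}^2$, followed by the trace theorem $\nm{\na u_0}{0,\Gamma}\le C\nm{u_0}{1+s,D}$ (valid since $s>1/2$), whence $\abs{\mc{R}(w)}\le C(\abs{K_1}^\theta\eta(K_1)+e(\textsc{HMM}))\wnm{w}$. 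Coercivity \eqref{eq:coer} for $v_h-\chi$, the splitting $B^\eps_h(v_h-\chi,v_h-\chi)=-\mc{R}(v_h-\chi)+B^\eps_h(u_0-\chi,v_h-\chi)$, continuity \eqref{eq:cont} (extended to $H^{1+s}(D)+X_{h,H}^0$ by using the genuine trace theorem in place of the inverse estimate \eqref{eq:traceinv} on the $u_0$-part), and the triangle inequality then give \eqref{eq:macroH1} after passing to the infimum over $\chi$.

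For \eqref{eq:macroL2} I would run duality on $e_0:=u_0-v_h$. Testing the adjoint problem \eqref{eq:dual} with $e_0$ and integrating by parts elementwise, using $\jump{\psi_g}=0$ and the continuity of $\A^T\na\psi_g\cdot\n$ across $\Gamma$, then repeating the $\b_h-\A$ splitting, gives
\[
\dual{g}{e_0}=B^\eps_h(e_0,\psi_g-\chi_\psi)+\mc{R}(\chi_\psi)-\sum_{i=1}^2\int_{K_i}(\b_h-\A)\na e_0\cdot\na\psi_g\dx+\sum_{e\in\E_\cap}\int_e\aver{(\b_h-\A)^T\na\psi_g\cdot\n}_\omega\jump{e_0}\ds,
\]
where $\chi_\psi\in X_{h,H}^0$ is a quasi-interpolant of $\psi_g$ and I used $B^\eps_h(e_0,\chi_\psi)=\mc{R}(\chi_\psi)$. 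I would estimate the four pieces with the tools already assembled: continuity and $\wnm{\psi_g-\chi_\psi}\le C(h+H)\nm{\psi_g}{2,D}$ for the first; the residual bound together with $\mc{R}(\psi_g)=\sum_{i=1}^2\int_{K_i}(\b_h-\A)\na u_0\cdot\na\psi_g\dx$ (as $\jump{\psi_g}=0$) and a second use of Lemma~\ref{lema:smalldomain}, now on $\na\psi_g\in H^1(D)$ producing the factor $\abs{K_1}^{1/n}\eta(K_1)$, for the second; and the small-domain and $e(\textsc{HMM})$ bounds for the last two, with $\wnm{e_0}$ controlled by \eqref{eq:macroH1} and $\nm{\psi_g}{2,D}\le C\nm{g}{0,D}$ throughout. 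Collecting and taking the supremum over $\nm{g}{0,D}=1$ produces \eqref{eq:macroL2}.

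The main obstacle I anticipate is the bookkeeping of the interface terms on the non-matching grid. One must consistently use $\rho|_\Gamma=0$, so that only $\A_h-\A$ (hence $e(\textsc{HMM})$) survives on $\Gamma$, and rely on the mesh-ratio-independent weights $\omega_1,\omega_2$ in the weighted averages, while carefully separating the trace contributions of the smooth functions $u_0,\psi_g$ from the inverse-estimate contributions of the discrete functions. Reproducing the precise product structure $(\cdot)(h+H+\abs{K_1}^{1/n}\eta(K_1))$ in \eqref{eq:macroL2} hinges on applying Lemma~\ref{lema:smalldomain} twice---once to $\na u_0$ with exponent $\theta$ and once to $\na\psi_g$ with exponent $1/n$---and on checking that every cross term not of this form is dominated either by one of these factors or by the additive $e(\textsc{HMM})$.
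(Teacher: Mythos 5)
Your proposal is correct, but it takes a genuinely different route from the paper's proof. The paper never writes a consistency identity for $u_0$ against $B^\eps_h$ directly; instead it introduces an auxiliary discrete solution $\wt{u}\in X_{h,H}^0$ of the Nitsche problem for the homogenized operator, $A(\wt{u},v)=\dual{f}{v}$, splits $u_0-v_h=(u_0-\wt{u})+(\wt{u}-v_h)$, handles the first piece by Galerkin orthogonality, C\'ea's lemma and a duality bound, and handles the second piece through the perturbation identity $B^\eps_h(\wt{u}-v_h,w)=B^\eps_h(\wt{u},w)-A(\wt{u},w)$, whose interface terms involve only \emph{discrete} functions and are therefore estimated with the inverse trace inequality \eqref{eq:traceinv}; its $L^2$ bound for $\wt{u}-v_h$ likewise uses a \emph{discrete} dual function $\varphi_g$ rather than $\psi_g$. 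Your one-step Strang argument instead puts $\na u_0$ (and, in the duality step, $\na\psi_g$) on the interface, so you must invoke genuine trace theorems, which is legitimate precisely because $s>1/2$ and $\psi_g\in H^2(D)$ are assumed in this theorem. What the paper's extra layer buys is modularity: the hybridization error $\b_h-\A$ is completely decoupled from the approximation of $u_0$, so that in Theorem~\ref{thm:Macro2} the smooth-case step can be swapped for the medius analysis of Lemma~\ref{lemma:Macro1low} without touching the perturbation step; your route, being tied to $L^2$ traces of $\na u_0$ on $\Gamma$, would not survive the relaxation to $s\le 1/2$. What your route buys is brevity in the smooth case and a more direct appearance of the small-domain factors: you apply Lemma~\ref{lema:smalldomain} once to $\na u_0$ (exponent $\theta$) and once to $\na\psi_g$ (exponent $1/n$), whereas the paper needs the intermediate estimate \eqref{eq:H1est2} to convert $\abs{\wt{u}}_{1,K_1}$ into these quantities. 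One caveat, shared equally by the paper: both your extended continuity bound and the paper's C\'ea estimate \eqref{eq:cea} apply continuity of the bilinear form with a nondiscrete first argument, whose interface flux term is not controlled by the norm $\wnm{\cdot}$ alone; making either argument fully rigorous requires an augmented norm or a concrete choice of interpolant for $\chi$, as in Corollary~\ref{coro:macro}, so this is not a gap specific to your proposal.
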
%

Let $\chi$ be the Scott-Zhang interpolant~\cite{ScottZhang:1990} in~\eqref{eq:macroH1} and~\eqref{eq:macroL2} and we may obtain the following error estimates.
\begin{corollary}\label{coro:macro}
		Under the same assumptions as in Theorem~\ref{thm:Macro}, there exists $C$ depending on $D,\lam,\Lam$ and $\sigma$ such that
		\begin{equation*}
			\wnm{u_0-v_h}\le C\Lr{h^{s^*}+H^{s^*}+\abs{K_1}^{\theta}\eta(K_1)+e(\textsc{HMM})},
		\end{equation*}
		and
		\begin{equation*}
			\begin{aligned}
				\nm{u_0-v_h}{0,D}&\le C\Lr{h^{s^*}+H^{s^*}+\abs{K_1}^{\theta}\eta(K_1)}\Lr{h+H+\abs{K_1}^{1/n}\eta(K_1)}\\
				&\quad+Ce(\textsc{HMM}),
			\end{aligned}
		\end{equation*}
		with $s^*=\min(s,r)$.
	\end{corollary}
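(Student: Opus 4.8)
The plan is to obtain the corollary directly from Theorem~\ref{thm:Macro} by choosing $\chi$ to be the Scott--Zhang interpolant~\cite{ScottZhang:1990} and estimating the best-approximation term $\inf_{\chi\in X_{h,H}^0}\wnm{u_0-\chi}$. Since $\T_{h,H}=\T_h\cup\T_H$ is non-matching, I would interpolate componentwise, setting $\chi=(\chi_1,\chi_2)$ with $\chi_1=I_hu_0\in X_h$ the Scott--Zhang interpolant over $\T_h$ in $K_1$ and $\chi_2=I_Hu_0\in X_H$ over $\T_H$ in $K_2$. Because the Scott--Zhang operator reproduces homogeneous Dirichlet data and $u_0=0$ on $\pa D$, we have $\chi\in X_{h,H}^0$, so $\chi$ is admissible in the infimum, and it remains to control the two contributions to $\wnm{u_0-\chi}^2$ in~\eqref{eq:norm}: the broken $H^1$ seminorm and the weighted jump across $\Gamma$. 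The volume part is immediate: with $s^*=\min(s,r)$ (the spaces being of degree $r$), the standard Scott--Zhang estimates on shape-regular meshes give $\abs{u_0-\chi_1}_{1,K_1}\le Ch^{s^*}\nm{u_0}{1+s^*,K_1}$ and $\abs{u_0-\chi_2}_{1,K_2}\le CH^{s^*}\nm{u_0}{1+s^*,K_2}$, with the constant absorbing the shape-regularity dependence on $\sigma$.

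The jump term is the crux, and here the non-matching grid enters. Since $s>1/2$ forces $u_0\in H^{1+s}(D)\hookrightarrow C^0(\ov{D})$ for both $n=2,3$, the trace of $u_0$ on $\Gamma$ is single-valued and $\jump{u_0}=0$; hence on each $e\in\E_\cap$ one has $\jump{u_0-\chi}=(u_0-\chi_1)-(u_0-\chi_2)$. I would bound this by the triangle inequality and apply the scaled trace inequality $\nm{u_0-\chi_1}{0,e}^2\le C\lr{h_{\tau_1}^{-1}\nm{u_0-\chi_1}{0,\tau_1}^2+h_{\tau_1}\abs{u_0-\chi_1}_{1,\tau_1}^2}$ on the abutting element $\tau_1\in\T_h^\Gamma$, together with the $L^2$ and $H^1$ interpolation estimates $\nm{u_0-\chi_1}{0,\tau_1}\le Ch_{\tau_1}^{1+s^*}\nm{u_0}{1+s^*,\omega_{\tau_1}}$ and $\abs{u_0-\chi_1}_{1,\tau_1}\le Ch_{\tau_1}^{s^*}\nm{u_0}{1+s^*,\omega_{\tau_1}}$ on the Scott--Zhang patch $\omega_{\tau_1}$. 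Because $h_e=h_{\tau_1}$, the weight satisfies $\frac{1}{H_e+h_e}\le\frac{1}{h_e}=\frac{1}{h_{\tau_1}}$, and after cancellation both trace terms collapse to the single clean factor $h_{\tau_1}^{2s^*}\le h^{2s^*}$ on the $K_1$ side; the $K_2$ side is symmetric and yields $H^{2s^*}$.

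Summing over $\E_\cap$, I would use that every $e\in\E_\cap$ lies in a unique $h$-face and a unique $H$-face, so intersection edges sharing a common face reassemble that face's $L^2$ norm without over-counting, while the finite overlap of the patches $\omega_\tau$ converts the elementwise bounds into $\sum_{e\in\E_\cap}\frac{\gamma}{H_e+h_e}\nm{\jump{u_0-\chi}}{0,e}^2\le C\gamma\lr{h^{2s^*}+H^{2s^*}}\nm{u_0}{1+s^*,D}^2$. Combining with the volume part and using $\nm{u_0}{1+s^*,D}\le\nm{u_0}{1+s,D}$ gives $\inf_{\chi}\wnm{u_0-\chi}\le C\lr{h^{s^*}+H^{s^*}}\nm{u_0}{1+s,D}$.

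Inserting this into~\eqref{eq:macroH1} and~\eqref{eq:macroL2}, and absorbing the finite factor $\nm{u_0}{1+s,D}$ into the generic constant (which therefore depends on $D,\lam,\Lam$ and now also $\sigma$), produces both displayed estimates. The main obstacle is precisely the jump term across the non-matching interface: one must pair the weight $1/(H_e+h_e)$ with the correct local mesh size on each side and treat the intersected edges $\E_\cap$ without over-counting, so that $\T_h$ and $\T_H$ contribute $h^{s^*}$ and $H^{s^*}$ \emph{independently} of the mesh ratio $H/h$. Once the best-approximation bound is in hand, the $L^2$ conclusion in~\eqref{eq:macroL2} is routine.
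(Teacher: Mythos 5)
Your proposal is correct and follows essentially the same route as the paper: the paper also takes $\chi$ to be the componentwise Scott--Zhang interpolant $(\Pi_h u_0,\Pi_H u_0)$ over the two meshes, bounds the broken $H^1$ part by the standard interpolation estimates, and controls the jump term by splitting it with the triangle inequality, pairing $(H_e+h_e)^{-1}$ with $h_e^{-1}$ on the $\T_h$ side and $H_E^{-1}$ on the $\T_H$ side, and applying the scaled trace inequality elementwise before inserting the resulting bound $C(h^{s^*}+H^{s^*})$ into \eqref{eq:macroH1} and \eqref{eq:macroL2}. Your additional care about reassembling the intersected edges $\E_\cap$ without over-counting and about $s^*=\min(s,r)$ matches the paper's intent exactly.
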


\noindent{\em Proof of Theorem \ref{thm:Macro}\;}	
Firstly, we need an auxiliary problem: Find $\wt{u}\in X_{h,H}^0$ such that
\begin{equation}\label{eq:hodisvar}
	A(\wt{u},v)=\langle f,v\rangle\quad \text{for all~} v\in X_{h,H}^0,
\end{equation}
where the bilinear form $A$ is defined for any $v,w\in X_{h,H}^0$ the same as $B_h^\eps$ with $\b_h$ replaced by $\A$.
From the Galerkin orthogonality
\begin{equation}\label{eq:Galer}
	A(u_0-\wt{u},v)=0 \qquad \text{for any}\quad v\in X_{h,H}^0,
\end{equation}
and~\eqref{eq:cont},~\eqref{eq:coer} we have
\begin{equation}\label{eq:cea}
	\wnm{u_0-\wt{u}}\leq \dfrac{4\max(\Lam,1)}{\min(1,\lam)}\inf_{\chi\in X_{h,H}^0}\wnm{u_0-\chi},
\end{equation}
and
\begin{equation}\label{eq:L2part1}
	\nm{u_0-\wt{u}}{0,D}\le 2\Lam \wnm{u_0-\wt{u}}\sup_{g\in L^2(D)}\dfrac{1}{\nm{g}{0,D}}
	\inf_{\chi\in X_{h,H}}\wnm{ \psi_g-\chi}.
\end{equation}

Let $w=\wt{u}-v_h$, we write
\begin{align*}
	B^\eps_h(w,w)=&B^\eps_h(\wt{u},w)-\dual{f}{w}=B^\eps_h(\wt{u},w)-A(\wt{u},w)\\
	=&\sum_{i=1}^2\int_{K_i}(\b_h-\A)\na \wt{u}\na w\dx-\sum_{e\in\E_\cap}\int_e\aver{(\b_h-\A)\na \wt{u}\cdot \n}_\omega\jump{w}\ds\\
	&-\sum_{e\in\E_\cap}\int_e\aver{(\b_h-\A)^T\na w\cdot \n}_\omega\jump{\wt{u}}\ds.
\end{align*}

By $\b_h-\A=\rho(\a-\A)+(1-\rho)(\A_h-\A)$, we have, for any $x\in K_1$,
\[
\nm{(\b_h-\A)(x)}{F}\le\nm{\b_h(x)}{F}+\nm{\A(x)}{F}\le\Lam'+\Lam,
\]
and for any $x\in K_2$,
\[
\nm{(\b_h-\A)(x)}{F}=\nm{(\A_h-\A)(x)}{F}\le e(\textsc{HMM}).
\]
This implies
\[
\abs{\sum_{i=1}^2\int_{K_i}(\b_h-\A)\na \wt{u}\na w\dx}\le(\Lam+\Lam')\abs{\wt{u}}_{1,K_1}\abs{w}_{1,K_1}+e(\textsc{HMM})\wnm{\wt{u}}\wnm{w}.
\]
Noting that $\rho=0$ for $e\in\mc{E}_{\cap}$, we obtain for any $x\in e$,
\[
\nm{(\b_h-\A)(x)}{F}=\nm{(\A_h-\A)(x)}{F}\le e(\textsc{HMM}).
\]
Hence, we obtain
\begin{align*}
	&\quad\abs{\sum_{e\in\E_\cap}\int_e\aver{(\b_h-\A)\na \wt{u}\cdot \n}_\omega\jump{w}\ds}\\
	&\le\Lr{\sum_{e\in\E_\cap}\dfrac{H_e+h_e}{\gamma}\nm{\aver{(\b_h-\A)\na \wt{u}\cdot \n}_\omega}{0,e}^2}^{1/2}\Lr{\sum_{e\in\E_\cap}\dfrac{\gamma}{H_e+h_e}\nm{\jump{w}}{0,e}^2}^{1/2}\\
	&\le\dfrac{e(\textsc{HMM})}{\gamma_0}\Lr{\sum_{e\in\E_\cap}(H_e+h_e)\nm{\aver{\na\wt{u}}_\omega}{0,e}^2}^{1/2}
	\wnm{w}.
\end{align*}
Using Cauchy-Schwarz inequality and ~\eqref{eq:traceinv}, we obtain
\begin{align*} \sum_{e\in\E_\cap}(H_e+h_e)\nm{\aver{\na\wt{u}}_\omega}{0,e}^2
&\le\sum_{e\in\E_\cap}(H_e+h_e)\Lr{\omega_1
	\nm{\na\wt{u}_1}{0,e}^2+\omega_2\nm{\na\wt{u}_2}{0,e}^2}\\
&=\sum_{e\in\E_h}h_e
	\nm{\na\wt{u}_1}{0,e}+\sum_{e\in\E_H}H_e\nm{\na\wt{u}_2}{0,e}^2\\
	&\le C_{\text{inv}}\Lr{\sum_{\tau\in\T_{h}^\Gamma}\nm{\na\wt{u}_1}{0,\tau}^2
+\sum_{\tau\in\T_{H}^\Gamma}\nm{\na\wt{u}_2}{0,\tau}^2}.
\end{align*}%

A combination of the above two inequalities gives
\[
\abs{\sum_{e\in\E_\cap}\int_e\aver{(\b_h-\A)\na \wt{u}\cdot \n}_\omega\jump{w}\ds}\le\dfrac{C_{\text{inv}}}{\gamma_0}e(\textsc{HMM})\wnm{\wt{u}}\wnm{w}.
\]
Exchanging the role of $\wt{u}$ and $w$, we obtain
\[
\abs{\sum_{e\in\E_\cap}\int_e\aver{(\b_h-\A)^T\na w\cdot \n}_\omega\jump{\wt{u}}\ds}\le
\dfrac{C_{\text{inv}}}{\gamma_0}e(\textsc{HMM})\wnm{\wt{u}}\wnm{w}.
\]

Combining all the above estimates, we obtain
\begin{equation}\label{eq:H1est1}
	B^\eps_h(w,w)\le(\Lam+\Lam')\abs{\wt{u}}_{1,K_1}\abs{w}_{1,K_1}+(1+2C_{\text{inv}}/\gamma_0)e(\textsc{HMM})\wnm{\wt{u}}\wnm{w}.
\end{equation}
Using the triangle inequality and~\eqref{eq:smalldomain},  we obtain
\begin{equation}\label{eq:H1est2}
	\begin{aligned}
		\nm{\wt{u}}{1,K_1}&\le\nm{u_0-\wt{u}}{1,K_1}+\nm{u_0}{1,K_1}\\
		&\le \wnm{u_0-\wt{u}}+C\abs{K_1}^{\theta}\eta(K_1)\nm{u_0}{1+s,D}.
	\end{aligned}	
\end{equation}

Combining the above three inequalities and the a-priori estimate $\wnm{\wt{u}}\leq C\nm{f}{-1,D}$, we obtain
\[
	\wnm{\wt{u}-v_h}\le C\Lr{\wnm{ u_0-\wt{u}}+\abs{K_1}^{\theta}\eta(K_1)\nm{u_0}{1+s,D}+e(\textsc{HMM})\nm{f}{-1,D}},
\]
which together with~\eqref{eq:cea} and the triangle inequality
conclude the estimate~\eqref{eq:macroH1}.

We exploit \textit{Aubin-Nitsche's dual argument} to prove the $L^2$ error bound.

For any $g\in L^2(D)$, let $\varphi_g\in H_0^1(D)$ satisfying
\[
B_h^\eps(v,\varphi_g)=\dual{g}{v}\qquad \text{for all\quad} v\in X_{h,H}.
\]
Substituting $v=\wt{u}-v_h$ into the above equation, we obtain
\begin{align*}
	\dual{g}{\wt{u}-v_h}&=B_h^\eps(\wt{u}-v_h,\varphi_g)=B_h^\eps(\wt{u},\varphi_g)-\dual{f}{\varphi_g}\\
	&=B_h^\eps(\wt{u},\varphi_g)-A(\wt{u},\varphi_g).
\end{align*}
Proceeding along the same line that leads to~\eqref{eq:H1est1}, we obtain
\[
\abs{\dual{g}{\wt{u}-v_h}}\leq C\Lr{\abs{\wt{u}}_{1,K_1}\abs{\varphi_g}_{1,K_1}+e(\textsc{HMM})\wnm{\wt{u}}\wnm{\varphi_g}}.
\]
Using~\eqref{eq:H1est2} and the a-priori estimates for $\wt{u}$ and $\varphi_g$, we obtain
\begin{align}\label{eq:dualest}
	\abs{\dual{g}{\wt{u}-v_h}}&\le C\abs{K_1}^{1/n}\eta(K_1)\nm{g}{0,D}\Lr{\wnm{u_0-\wt{u}}
		+\abs{K_1}^{\theta}\eta(K_1)\nm{u_0}{1+s,D}}\nn\\
	&\quad+Ce(\textsc{HMM})\nm{f}{-1,D}\nm{g}{-1,D}.
\end{align}
Then
\begin{align}\label{eq:L2part2}
	\nm{\wt{u}-v_h}{0,D}&\le C\abs{K_1}^{1/n}\eta(K_1)\Lr{\wnm{u_0-\wt{u}}+\abs{K_1}^{\theta}\eta(K_1)\nm{u_0}{1+s,D}}\\
	&\quad+Ce(\textsc{HMM})\nm{f}{-1,D}.\nn	
\end{align}
Combining the above inequality with~\eqref{eq:cea},~\eqref{eq:L2part1} and using the triangle inequality, we obtain~\eqref{eq:macroL2}.
\qed%

In the following, we shall consider the case when $u_0$ is nonsmooth, which may be caused by a rough homogenized coefficient $\A$, or a point load function $f$, or a nonconvex domain $D$. If $u_0\in H^{1+s}(D)$ with $0<s\le 1/2$, the Galerkin orthogonality~\eqref{eq:Galer} is invalid and we cannot use C\'{e}a's lemma to prove~\eqref{eq:cea}. To overcome this difficulty, we employ the medius analysis in~\cite{Gudi:2010_New}. We firstly need the following extra assumptions on $\T_{h,H}$:

\textbf{Assumption A}: $\T_h^\Gamma$ is quasi-uniform, i.e., there exists a constant $\nu$ independent of $h_\Gamma$ such that for any $\tau\in\T_h^{\Gamma}$, $h_\tau\geq\nu h_\Gamma$, where $h_\Gamma{:}=\max\{h_\tau :  \tau\in\T_h^\Gamma\}$.


\textbf{Assumption B}: $\E_h$ is a subgrid of $\E_H$, i.e., $\E_\cap=\E_h$.

According  to \textbf{Assumption A} and \textbf{Assumption B}, we may construct a compatible sub-decomposition $\wt{\T}_H^\Gamma$ out of $\T_H^\Gamma$ such that $\wt{\T}_H^\Gamma\cup\T_h^\Gamma$ is a conforming mesh, which is quasi-uniform near the interface $\Gamma$, while we never need such refined mesh in the implementation; see Fig.~\ref{fig:splitTH}. Both assumptions have been used in~\cite{Karakshian:2003} to prove the a-posterior error estimate for the DG method.
\begin{figure}[h]
	\centering
	\includegraphics{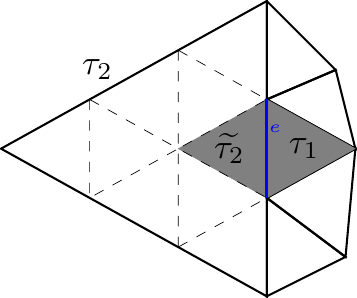}
	\caption{The split of the element in $\T_H^\Gamma$.}
	\label{fig:splitTH}
\end{figure}

We define the oscillation for $f\in L^2(D)$ as
\[
\textsc{osc}(f){:}=\left[\sum_{\tau\in \mathcal{T}_{h,H}}h_\tau^2\left(\inf_{\bar{f}\in P_0(\tau)}\nm{f-\bar{f}}{0,\tau}^2\right)\right]^{1/2},
\]
and the oscillation for $\A\in [L^\infty(D)]^{n\x n}$ as
\[
\textsc{osc}(\A):=\max_{\tau\in \T_{h,H}}\Lr{\inf_{\bar{\A}\in[P_0(\tau)]^{n\times n}}\nm{\A-\bar{\A}}{0,\infty,\tau}}.	
\]

By Meyers' regularity result~\cite{Meyers:1963}, there exists $p_0>2$ that depends on $D$, $\Lam$ and $\lam$, such that for all $p\leq p_0$,
\[
\abs{ u_0}_{1,p,D}\leq C\nm{f}{-1,p,D},
\]
where $C$ depends on $n$, $D$, $\Lam$ and $\lam$. Using H\"{o}lder's inequality, we obtain
\begin{equation}\label{eq:meyer}
	\begin{aligned}
		\abs{u_0}_{1,K_1}&\le\abs{K_1}^{1/2-1/p}\abs{u_0}_{1,p,K_1}\le\abs{K_1}^{1/2-1/p}\abs{u_0}_{1,p,D}\\
		&\leq C\abs{K_1}^{1/2-1/p} \nm{f}{-1,p,D}.
	\end{aligned}
\end{equation}

\begin{theorem}\label{thm:Macro2}
	Let $u_0$ and $v_h$ be the solutions of Problem~\eqref{eq:homoprob} and Problem~\eqref{eq:disvar}, respectively. If {\bf Assumption A} and {\bf Assumption B} are valid, then there exists $C$ depending on $D,\lam,\Lam$ and $C_\text{inv}$ such that for any $2<p<p_0$,
	\begin{equation}\label{eq:macroh1}
		\begin{aligned}
			\wnm{u_0-v_h}\leq& C\varrho\left(\inf_{\chi\in X_{h,H}^0}\wnm{ u_0-\chi}+\textsc{osc}(\A)+\textsc{osc}(f)\right)\\
			&+C\Lr{e(\textsc{HMM})+\abs{K_1}^{1/2-1/p}}.
		\end{aligned}
	\end{equation}
	
	Under the same assumptions, for any $2<p<\wt{p}_0$ with
	\begin{equation}\label{eq:index}
		\wt{p}_0=\left\{\begin{aligned}
			p_0&\quad\text{if\quad}n=2,\\
			\min(p_0,6)&\quad\text{if\quad}n=3,
		\end{aligned}\right.
	\end{equation}
	it holds that
	\begin{align}
		&\quad\nm{u_0-v_h}{0,D}\nn\\
		&\le C\Lr{\varrho^2\left(\inf_{\chi\in X_{h,H}^0}\wnm{ u_0-\chi}+\textsc{osc}(f)+\textsc{osc}(\A)\right)
			+\abs{K_1}^{1/2-1/p}}\nn\\
		&\quad\times \Bigg(\sup_{g\in L^2(D)}\frac{1}{\nm{g}{0,D}}\inf_{\wt{\chi}\in X_{h,H}}\Big(\sum_{\tau_\in \T_{h,H}}h_\tau^{-1}\nm{\psi_g-\wt{\chi}}{0,\tau}+\wnm{ \psi_g-\wt{\chi}}+\textsc{osc}(g)\Big)\nn\\
		&\qquad\quad+\textsc{osc}(\A)+\abs{K_1}^{1/2-1/p}\Bigg)+Ce(\textsc{HMM}),\label{eq:macrol2}
	\end{align}
	where $\varrho=(H+h)/(\gamma h_\Gamma)$ and $\psi_g\in H_0^1(D)$ is the unique solution of Problem~\eqref{eq:dual}.
\end{theorem}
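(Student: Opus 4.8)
The plan is to split the error through the auxiliary solution $\wt u$ of~\eqref{eq:hodisvar}, writing $\wnm{u_0-v_h}\le\wnm{u_0-\wt u}+\wnm{\wt u-v_h}$. The second term is controlled exactly as in the proof of Theorem~\ref{thm:Macro}: coercivity~\eqref{eq:coer} for $B_h^\eps$ together with $B_h^\eps(\wt u-v_h,\cdot)=B_h^\eps(\wt u,\cdot)-A(\wt u,\cdot)$ leaves only terms carrying the factor $\b_h-\A=\rho(\a-\A)+(1-\rho)(\A_h-\A)$, whose volume part is supported in $K_1$ and which equals the HMM error on $K_2$ and on $\E_\cap$. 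Estimate~\eqref{eq:H1est1} then gives $\wnm{\wt u-v_h}\le C(\abs{\wt u}_{1,K_1}+e(\textsc{HMM})\wnm{\wt u})$. The only departure from the smooth case is that the regularity-based bound~\eqref{eq:H1est2} on $\abs{u_0}_{1,K_1}$ is replaced by Meyers' estimate~\eqref{eq:meyer}, giving $\abs{\wt u}_{1,K_1}\le\wnm{u_0-\wt u}+C\abs{K_1}^{1/2-1/p}\nm{f}{-1,p,D}$; combined with the a priori bound $\wnm{\wt u}\le C\nm{f}{-1,D}$ this produces the $e(\textsc{HMM})$ and $\abs{K_1}^{1/2-1/p}$ contributions in~\eqref{eq:macroh1}.

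The heart of the proof is the medius bound $\wnm{u_0-\wt u}\le C\varrho(\inf_{\chi\in X_{h,H}^0}\wnm{u_0-\chi}+\textsc{osc}(\A)+\textsc{osc}(f))$ for the auxiliary problem, which no longer enjoys Galerkin orthogonality. Fixing $\chi\in X_{h,H}^0$ and setting $w=\wt u-\chi$, coercivity~\eqref{eq:coer} for $A$ reduces the task to estimating the residual $\dual{f}{w}-A(\chi,w)$. I would introduce the new enriching operator $E_h$, built as announced in the introduction from the compatible refinement $\wt{\T}_H^\Gamma\cup\T_h^\Gamma$ supplied by Assumptions~A and~B, mapping $X_{h,H}^0$ into $H_0^1(D)$, and split $w=E_hw+(w-E_hw)$. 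For the conforming part, $\jump{E_hw}=0$ lets me invoke the continuous equation $\dual{\A\na u_0}{\na E_hw}=\dual{f}{E_hw}$, so that $\dual{f}{E_hw}-A(\chi,E_hw)$ reduces to $\dual{\A\na(u_0-\chi)}{\na E_hw}$ plus an interface flux paired with $\jump{\chi}$, both bounded by $C\wnm{u_0-\chi}\wnm{w}$ using Cauchy--Schwarz and the trace inequality~\eqref{eq:traceinv}, after noting $\jump{\chi}=-\jump{u_0-\chi}$ since $u_0\in H_0^1(D)$. For the remainder, elementwise integration by parts against $\chi$ produces the interior residual $f+\text{div}(\A\na\chi)$ and the flux jumps; replacing $f$ and $\A$ by elementwise constants converts these into $\textsc{osc}(f)$ and $\textsc{osc}(\A)$, while the enriching estimate bounds $\sum_{\tau\in\T_{h,H}}h_\tau^{-2}\nm{w-E_hw}{0,\tau}^2+\sum_{i=1}^2\abs{w-E_hw}_{1,K_i}^2$ by $C\sum_{e\in\E_\cap}h_e^{-1}\nm{\jump{w}}{0,e}^2$, with a constant independent of the mesh ratio.

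The factor $\varrho=(H+h)/(\gamma h_\Gamma)$ enters precisely when this jump quantity, which governs both the $H^1$-stability of $E_hw$ and the remainder estimate, is converted into the penalty-weighted jump norm inside $\wnm{w}$: writing $h_e^{-1}=\frac{H_e+h_e}{\gamma h_e}\,\frac{\gamma}{H_e+h_e}$ and using $h_e\ge\nu h_\Gamma$ from Assumption~A, one gets $h_e^{-1}\nm{\jump{w}}{0,e}^2\le C\varrho\,\frac{\gamma}{H_e+h_e}\nm{\jump{w}}{0,e}^2$. Collecting the conforming and remainder contributions and absorbing $\wnm{w}$ yields the medius bound, and combining it with the first paragraph gives~\eqref{eq:macroh1}.

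For the $L^2$ estimate~\eqref{eq:macrol2} I would use the Aubin--Nitsche duality $\nm{u_0-v_h}{0,D}=\sup_{g}\dual{g}{u_0-v_h}/\nm{g}{0,D}$ and split $\dual{g}{u_0-v_h}=\dual{g}{u_0-\wt u}+\dual{g}{\wt u-v_h}$. The second piece is handled as in~\eqref{eq:dualest}, producing $e(\textsc{HMM})$ and $\abs{K_1}$ terms. For the first, I pair $u_0-\wt u$ with the dual solution $\psi_g$ of~\eqref{eq:dual} and repeat the enriching/residual argument on the dual side, which yields the dual approximation-plus-oscillation factor $\sum_{\tau\in\T_{h,H}}h_\tau^{-1}\nm{\psi_g-\wt\chi}{0,\tau}+\wnm{\psi_g-\wt\chi}+\textsc{osc}(g)$; feeding in the energy bound for $\wnm{u_0-\wt u}$ already established produces the product structure with $\varrho^2$ on the primal factor. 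In $n=3$ the restriction $\wt{p}_0=\min(p_0,6)$ appears because the embedding $H^1(D)\hookrightarrow L^6(D)$ is needed to control the dual contribution over the small region $K_1$. The main obstacle throughout is the careful bookkeeping of the interface terms in the medius step, where $\chi$ jumps but $E_hw$ does not, together with proving the enriching estimate with a mesh-ratio-independent constant so that the entire dependence is isolated in the single explicit factor $\varrho$.
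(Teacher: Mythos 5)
Your proposal is correct and follows essentially the same route as the paper: split through the auxiliary solution $\wt{u}$ of~\eqref{eq:hodisvar}, reuse the argument of Theorem~\ref{thm:Macro} for $\wnm{\wt{u}-v_h}$ with~\eqref{eq:smalldomain} replaced by Meyers' estimate~\eqref{eq:meyer}, bound $\wnm{u_0-\wt{u}}$ by the medius analysis (Strang's lemma, the enriching operator of Lemma~\ref{lemma:enrich} built from Assumptions A and B, and the local efficiency estimates converting residuals into $\textsc{osc}(f)$ and $\textsc{osc}(\A)$, with $\varrho$ entering exactly through the weight conversion $h_e^{-1}\le C\varrho\,\gamma/(H_e+h_e)$), and conclude the $L^2$ bound by duality, the restriction $\wt{p}_0=\min(p_0,6)$ for $n=3$ stemming from the same Sobolev-embedding arithmetic you cite (the paper phrases it as $L^2(D)\hookrightarrow W^{-1,p}(D)$). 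The only differences are presentational: you inline Strang's lemma with $w=\wt{u}-\chi$ rather than a generic test function $\psi$, which is exactly how the paper's Lemma~\ref{lemma:Strang} and the decomposition $I_1+\cdots+I_5$ in the proof of Lemma~\ref{lemma:Macro1low} are organized.
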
%

We do not impose the regularity assumption on $\psi_g$. The proof of this theorem is a combination of the ways that lead to Theorem~\ref{thm:Macro} and Lemma~\ref{lemma:Macro1low} below, provided that we replace~\eqref{eq:smalldomain} by ~\eqref{eq:meyer} and replace~\eqref{eq:dualest} by
\begin{align*}
	\abs{\dual{g}{\wt{u}-v_h}}\leq& C\abs{K_1}^{1/2-1/p}\Lr{\wnm{u_0-\wt{u}}
		+\abs{K_1}^{1/2-1/p}\nm{f}{-1,p,D}}\nm{g}{-1,p,D}\\
	&\quad+Ce(\textsc{HMM})\nm{f}{-1,D}\nm{g}{-1,D},\nn
\end{align*}
respectively.

It follows from the Sobolev imbedding $L^2(D)\hookrightarrow W^{-1,p}(D)$ for any $p\ge 1$ if $n=2$ and for any $1\leq p\le 6$ if $n=3$~\cite{AdamsFournier:2003}, the estimate~\eqref{eq:L2part2} is changed to
\begin{align}\label{eq:l2part2}
	\nm{\wt{u}-v_h}{0,D}\leq& C\abs{K_1}^{1/2-1/p}\Lr{\wnm{u_0-\wt{u}}+\abs{K_1}^{1/2-1/p}\nm{f}{-1,p,D}}\\
	&\quad+e(\textsc{HMM})\nm{f}{-1,D}\qquad\text{for any\quad}2<p<\wt{p}_0.\nn
\end{align}
\begin{lemma}\label{lemma:Macro1low}
	Under the same assumptions of Theorem~\ref{thm:Macro2}, there exists $C$ depending on $D,\lam,\Lam$ and $C_\text{inv}$ such that
	\begin{equation}\label{eq:h1part1}
		\wnm{u_0-\wt{u}}\leq C\varrho\left(\inf_{\chi\in X_{h,H}^0}\wnm{u_0-\chi}+\textsc{osc}(f)+\textsc{osc}(\A)\right),
	\end{equation}
	and
	\begin{align}
		\nm{u_0-\wt{u}}{0,D}&\leq C \Lr{\varrho\wnm{u_0-\wt{u}}+\textsc{osc}(f)+\textsc{osc}(\A)}\nn\\
		&\quad\times \Bigg(\sup_{g\in L^2(D)}\frac{1}{\nm{g}{0,D}}\Big(\inf_{\wt{\chi}\in X_{h,H}}\big(\sum_{\tau_\in \T_{h,H}}h_\tau^{-1}\nm{\psi_g-\wt{\chi}}{0,\tau}+\wnm{ \psi_g-\wt{\chi}}\big)\nn\\
		&\qquad\quad+\textsc{osc}(g)\Big)+\textsc{osc}(\A)\Bigg),\label{eq:l2part1}
	\end{align}
	where $\psi_g\in H_0^1(D)$ is the unique solution of Problem~\eqref{eq:dual}.
\end{lemma}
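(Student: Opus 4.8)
The plan is to prove both estimates by \emph{medius analysis} in the spirit of~\cite{Gudi:2010_New}, using the coercivity and continuity of $A$ (which follow from Lemma~\ref{lemma:coer} applied to $\A\in\mc{M}(\lam,\Lam;D)$ in place of $\b_h$) together with an a-posteriori-type residual bound, and with an enriching operator tailored to the non-matching grid as the central device. Throughout, the Galerkin orthogonality~\eqref{eq:Galer} is unavailable, because $u_0$ need not be regular enough for $\A\na u_0\cdot\n$ to have an $L^2$ trace on the element boundaries, so the consistency error has to be controlled by hand.

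The first and most delicate step is to construct an enriching operator $E_h\colon X_{h,H}\to H_0^1(D)$ whose range lies in the conforming Lagrange space of degree $r$ over the compatible conforming mesh $\T_h^\Gamma\cup\wt{\T}_H^\Gamma$ near $\Gamma$ (and $\T_{h,H}$ away from it) furnished by \textbf{Assumption A} and \textbf{Assumption B}. Following~\cite{Brenner:1996,Karakshian:2003}, I would define $E_hv$ at each interior Lagrange node on $\Gamma$ by the \emph{weighted} average $\aver{v}_\omega$ of the two one-sided values, reusing the mesh-size weights $\omega_1,\omega_2$ of \S~\ref{sec:method}, keeping $E_hv=v$ away from $\Gamma$ and $E_hv\arrowvert_{\pa D}=0$. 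The key estimate, to be proved in \S~\ref{sec:pfaux}, is
\[
	\sum_{\tau\in\T_{h,H}}h_\tau^{-2}\nm{v-E_hv}{0,\tau}^2+\sum_{i=1}^2\abs{v-E_hv}_{1,K_i}^2\le C\varrho\sum_{e\in\E_\cap}\frac{\gamma}{H_e+h_e}\nm{\jump{v}}{0,e}^2\le C\varrho\wnm{v}^2,
\]
so that $\wnm{v-E_hv}$ is controlled by the jump part of $\wnm{v}$ alone. The point of the weighted average is that the remaining dependence on the mesh ratio appears only through the benign factor $\varrho=(H+h)/(\gamma h_\Gamma)$, rather than through an uncontrolled power of $H/h$; this is exactly where the arithmetic-mean operator of~\cite{Karakshian:2003} degrades on a non-matching grid, and where the quasi-uniformity near $\Gamma$ (\textbf{Assumption A}) and the subgrid property $\E_\cap=\E_h$ (\textbf{Assumption B}) are needed. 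I expect this step to be the main obstacle.

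With $E_h$ at hand, I would prove~\eqref{eq:h1part1} as follows. Fix a best approximation $\chi\in X_{h,H}^0$ and set $w=\wt{u}-\chi$; coercivity gives $\wnm{w}^2\le C\,A(w,w)=C\lr{\dual{f}{w}-A(\chi,w)}$. Splitting $w=E_hw+(w-E_hw)$, the conforming part $E_hw\in H_0^1(D)$ satisfies the weak form $\dual{\A\na u_0}{\na E_hw}=\dual{f}{E_hw}$, so that $\dual{f}{E_hw}-A(\chi,E_hw)$ reduces, after elementwise integration by parts, to the interior residual $f+\mathrm{div}(\A\na\chi)$, the flux jump $\jump{\A\na\chi\cdot\n}$ and the symmetric term involving $\jump{\chi}=\jump{\chi-u_0}$; by the standard a-posteriori bounds these are controlled by $\lr{\wnm{u_0-\chi}+\textsc{osc}(f)+\textsc{osc}(\A)}\wnm{w}$. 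The nonconforming remainder $\dual{f}{w-E_hw}-A(\chi,w-E_hw)$ is estimated through the enriching-operator bound above, which is where the mesh-ratio factor $\varrho$ enters. Tracking this factor through both contributions, dividing by $\wnm{w}$, using the triangle inequality $\wnm{u_0-\wt{u}}\le\wnm{u_0-\chi}+\wnm{w}$, and taking the infimum over $\chi$ yields~\eqref{eq:h1part1}.

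Finally, for~\eqref{eq:l2part1} I would run the Aubin--Nitsche duality against the adjoint problem~\eqref{eq:dual}, writing $\nm{u_0-\wt{u}}{0,D}=\sup_{g}\dual{g}{u_0-\wt{u}}/\nm{g}{0,D}$ and representing $\dual{g}{u_0-\wt{u}}$ through $\psi_g$. Because neither $u_0$ nor $\psi_g$ is assumed smooth, I would apply the residual-plus-enriching decomposition of the previous paragraph to \emph{both} arguments: the primal factor contributes $\varrho\wnm{u_0-\wt{u}}+\textsc{osc}(f)+\textsc{osc}(\A)$, while the dual factor, with $\psi_g$ replaced by its best discrete approximation $\wt{\chi}$, contributes $\sum_{\tau\in\T_{h,H}}h_\tau^{-1}\nm{\psi_g-\wt{\chi}}{0,\tau}+\wnm{\psi_g-\wt{\chi}}+\textsc{osc}(g)$ together with an $\textsc{osc}(\A)$ from the coefficient oscillation in the dual residual. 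Since no regularity of $\psi_g$ is invoked, the dual approximation stays inside the supremum over $g$, and collecting the two factors produces the product bound~\eqref{eq:l2part1}.
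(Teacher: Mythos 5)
Your strategy coincides with the paper's: a Gudi-type medius analysis built on an enriching operator whose nodal values on $\Gamma$ are the mesh-ratio-dependent weighted averages $\aver{v}_\omega$ (this is exactly Lemma~\ref{lemma:enrich} and Corollary~\ref{coro:enrich}; your key estimate is~\eqref{eq:approx1}--\eqref{eq:approx0}), a Strang-type bound for the energy error, and Aubin--Nitsche duality for~\eqref{eq:l2part1}. The gap is in how you estimate the two halves of the splitting $w=E_hw+(w-E_hw)$: you have interchanged the treatments, and as written neither estimate closes. (i) For the \emph{conforming} part you integrate by parts and claim the interior residual and flux jump, now tested against $E_hw$, are bounded by $\Lr{\wnm{u_0-\chi}+\textsc{osc}(f)+\textsc{osc}(\A)}\wnm{w}$. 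That bound would require $\sum_{\tau\in\T_{h,H}}h_\tau^{-2}\nm{E_hw}{0,\tau}^2\le C\wnm{w}^2$, which is false: $E_hw$ is a generic conforming function with no $h$-weighted smallness (only the difference $w-E_hw$ enjoys such a bound, via~\eqref{eq:approx0}); an $h^{-1}$-weighted $L^2$ norm is not controlled by the energy norm. (ii) For the \emph{nonconforming} remainder you invoke only the enriching-operator bound; but that bound enters through continuity of $A$, giving $\abs{A(\chi,w-E_hw)}\le C\wnm{\chi}\,\wnm{w-E_hw}\le C\varrho^{1/2}\wnm{\chi}\,\wnm{w}$, and the factor $\wnm{\chi}\approx\wnm{u_0}$ is $O(1)$, not small, so this term does not vanish and the argument stalls.

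The repair is precisely to swap the two treatments, which is what the paper does. The conforming part is estimated directly from the weak form, with no residuals and no oscillation terms: since $\jump{E_hw}=0$,
\[
\dual{f}{E_hw}-A(\chi,E_hw)=\dual{\A\na(u_0-\chi)}{\na E_hw}+\sum_{e\in\E_\cap}\int_e\aver{\A^T\na E_hw\cdot\n}_\omega\jump{\chi-u_0}\ds\le C\wnm{u_0-\chi}\,\wnm{E_hw},
\]
and this is what gets absorbed into Lemma~\ref{lemma:Strang}. The nonconforming part is where one integrates by parts against a piecewise constant $\bar{\A}$: the interior residual $f+\na\cdot\bar{\A}\na\chi$, the flux jump $\jump{\bar{\A}\na\chi\cdot\n}$, and the coefficient-oscillation term $\bar{A}(\chi,w-E_hw)-A(\chi,w-E_hw)$ are then paired with $w-E_hw$ and $\aver{w-E_hw}^\omega$, which \emph{are} small by Corollary~\ref{coro:enrich} (this is where $\varrho$ enters), while the residual factors themselves are bounded by the discrete local efficiency estimates of Lemma~\ref{lemma:post}, which is where $\wnm{u_0-\chi}$, $\textsc{osc}(f)$ and $\textsc{osc}(\A)$ arise. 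The same interchange must be made in your duality argument: in the paper's Appendix B the primal residual (in $\wt{u}$) is paired with the dual approximation error $\psi_g-\wt{\chi}$, and the dual residual (in $\wt{\chi}$) is paired with $E_h\wt{u}-\wt{u}$; nothing is ever tested against the enriched, non-small functions.
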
%
\begin{remark}
By contrast to~\eqref{eq:cea} and~\eqref{eq:L2part1}, there is no regularity assumption on $u_0$ in Lemma~\ref{lemma:Macro1low}, while it contains extra oscillation terms $\textsc{osc}(\A)$, $\textsc{osc}(f)$ and $\textsc{osc}(g)$. Such terms are indispensable by the recent error estimates on Nitsche's methods~\cite{LuthenJuntunenStenberg:2018, GustafssonStenbergVideman:2019}. We also note that the right hand side of~\eqref{eq:h1part1} and~\eqref{eq:l2part1} depend on $\varrho$, which seems unpleasant at the first glance because the error blows up for large $\varrho$, while it may be small or even harmless if we tune the penalized parameter $\gamma$. This will be shown in Corollary~\ref{coro:macro2} and \S~\ref{sec:numerics}. It would be very interesting to know whether such dependance can be removed. We shall leave this for further study.
\end{remark}

The proof of the above lemma is quite involved and is of independent interest, we postpone it to~\S\ref{sec:pfaux}.

It follows from Theorem~\ref{thm:Macro2} that
\begin{corollary}\label{coro:macro2}
If $\A\in [\mc{C}^{0,\alpha}(D)]^{n\times n}$ with $\alpha\in (0,1)$ and $\beta=\min(\al,s)$, if
$\gamma>\max((H+h)/h_\Gamma,\gamma_0)$, $u_0\in H^{1+s}(D)$ with $0<s\le 1/2$ and $f\in L^2(D)$, then
\begin{equation}\label{eq:corro2h1}
			\wnm{u_0-v_h}\le C\Lr{h^{\beta}+H^{\beta}+\abs{K_1}^{s/n}\eta(K_1)+e(\textsc{HMM})}.
	\end{equation} 	

If the adjoint problem~\eqref{eq:dual} has the following regularity estimate
\[
\nm{\psi_g}{1+s,D}\leq \nm{g}{0,D},
\]
then
\begin{equation}\label{eq:corro2l2}
		\nm{u_0-v_h}{0,D}\le C\Lr{h^{2\beta}+H^{2\beta}+\abs{K_1}^{2s/n}\eta^2(K_1)+e(\textsc{HMM})}.
\end{equation} 	
\end{corollary}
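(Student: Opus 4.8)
The plan is to specialize the abstract bounds~\eqref{eq:macroh1} and~\eqref{eq:macrol2} of Theorem~\ref{thm:Macro2} by inserting concrete approximation, oscillation, and embedding estimates, and to exploit the hypothesis $\gamma>\max((H+h)/h_\Gamma,\gamma_0)$ to dispose of the prefactor $\varrho$. Indeed $\gamma>(H+h)/h_\Gamma$ forces $\varrho=(H+h)/(\gamma h_\Gamma)<1$, so both $\varrho$ and $\varrho^2$ may be absorbed into the generic constant $C$, after which the two estimates become plain sums (resp.\ products) of the data terms.

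First I would estimate every term on the right of~\eqref{eq:macroh1}. Taking $\chi$ to be the Scott--Zhang interpolant~\cite{ScottZhang:1990} of $u_0\in H^{1+s}(D)$ and using the standard local estimates with $s\le 1/2\le r$ gives $\inf_{\chi}\wnm{u_0-\chi}\le C(h^s+H^s)$. Since $\A\in[\mc{C}^{0,\alpha}(D)]^{n\x n}$, choosing $\bar{\A}$ on each $\tau$ to be a nodal value of $\A$ yields $\nm{\A-\bar{\A}}{0,\infty,\tau}\le Ch_\tau^{\alpha}$, hence $\textsc{osc}(\A)\le C\max(h,H)^{\alpha}\le C(h^{\alpha}+H^{\alpha})$. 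For $f\in L^2(D)$ the crude bound $\inf_{\bar f}\nm{f-\bar f}{0,\tau}\le\nm{f}{0,\tau}$ gives $\textsc{osc}(f)\le C\max(h,H)\nm{f}{0,D}\le C(h+H)$. Finally, since $u_0\in H^{1+s}(D)$ means $\na u_0\in[H^s(D)]^n$, applying Lemma~\ref{lema:smalldomain} componentwise on $\Om=K_1$ converts the Meyers term: in place of $\abs{K_1}^{1/2-1/p}$ we obtain
\[
\abs{u_0}_{1,K_1}\le C\abs{K_1}^{\theta}\eta(K_1)\nm{\na u_0}{s,D}\le C\abs{K_1}^{s/n}\eta(K_1)\nm{u_0}{1+s,D},
\]
where $\theta=\min(s/n,1/2)=s/n$ because $s\le 1/2$. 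Collecting these with $\beta=\min(\alpha,s)\le 1$ and $h,H\le 1$, every exponent $s,\alpha,1$ is at least $\beta$, so $h^s+h^{\alpha}+h\le Ch^{\beta}$ and likewise in $H$; this collapses~\eqref{eq:macroh1} to~\eqref{eq:corro2h1}.

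For the $L^2$ estimate I would feed the same ingredients into the product structure of~\eqref{eq:macrol2}. With $\varrho\le 1$, the first factor is bounded by $C(h^{\beta}+H^{\beta}+\abs{K_1}^{s/n}\eta(K_1))$ exactly as above. For the second factor I use the assumed adjoint regularity $\nm{\psi_g}{1+s,D}\le\nm{g}{0,D}$ for the solution $\psi_g$ of~\eqref{eq:dual}: choosing $\wt{\chi}$ to be the Scott--Zhang interpolant of $\psi_g$ controls the two approximation terms in the bracket by $C(h^s+H^s)\nm{g}{0,D}$, while $\textsc{osc}(g)\le C(h+H)\nm{g}{0,D}$ for $g\in L^2(D)$; after dividing by $\nm{g}{0,D}$, taking the supremum, and converting its $\abs{K_1}^{1/2-1/p}$ exactly as before, the second factor is again bounded by $C(h^{\beta}+H^{\beta}+\abs{K_1}^{s/n}\eta(K_1))$. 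Multiplying the two factors and absorbing the cross terms by the arithmetic--geometric mean inequality gives the squared rate $C(h^{2\beta}+H^{2\beta}+\abs{K_1}^{2s/n}\eta^2(K_1))$; adding the $Ce(\textsc{HMM})$ contribution yields~\eqref{eq:corro2l2}.

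The main obstacle, and the only genuinely delicate point, is the conversion of the Meyers term $\abs{K_1}^{1/2-1/p}$ of Theorem~\ref{thm:Macro2} into the sharp $\abs{K_1}^{s/n}\eta(K_1)$. The cleanest route is to bypass~\eqref{eq:meyer} and invoke Lemma~\ref{lema:smalldomain} directly on $\na u_0\in H^s(D)$, which is precisely the Sobolev embedding $H^{1+s}(D)\hookrightarrow W^{1,q}(D)$ with $1/q=1/2-s/n$, i.e.\ the choice $p=q=2n/(n-2s)$ so that $1/2-1/p=s/n$; one must only check $q>2$ (immediate since $s>0$) and that the embedding is non-borderline. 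Since $s\le 1/2<n/2$ for $n=2,3$, the critical value $s=n/2$ that triggers the logarithm in $\eta$ is never attained, so $\eta(K_1)\equiv 1$ here and no log factor is incurred. The remaining care is purely bookkeeping: verifying that $\beta=\min(\alpha,s)$ is indeed the slowest rate among the interpolation ($h^s$), coefficient-oscillation ($h^{\alpha}$), and data-oscillation ($h$) contributions, so that all of them are dominated by $h^{\beta}+H^{\beta}$, and likewise that each of the two factors in the $L^2$ product carries the same leading rate so their product is controlled by the doubled exponent.
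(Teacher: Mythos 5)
Your proposal is correct and follows essentially the same route as the paper: bound $\varrho$ by a constant using $\gamma>(H+h)/h_\Gamma$, insert Scott--Zhang interpolation and the oscillation bounds $\textsc{osc}(\A)\le C(h^\alpha+H^\alpha)$, $\textsc{osc}(f),\textsc{osc}(g)\le C(h+H)$, and replace the Meyers term $\abs{K_1}^{1/2-1/p}$ by invoking Lemma~\ref{lema:smalldomain} on $\na u_0$ (and, via the assumed adjoint regularity, on $\na\psi_g$) with $\theta=s/n$, then multiply the two factors in the $L^2$ bound. The only differences are presentational (the paper restates the improved version of~\eqref{eq:l2part2} rather than post-processing~\eqref{eq:macrol2} directly), and your extra remarks on the non-borderline embedding and $\eta(K_1)\equiv 1$ are correct refinements of the same argument.
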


\begin{proof}
If $\gamma>\max((H+h)/h_\Gamma,\gamma_0)$, then $\varrho$ is a constant  which is independent of any other parameters.
If $\A\in [\mc{C}^{0,\alpha}(D)]^{d\times d}$, then there exists $C$ independent of $h$ and $H$ such that \(\textsc{osc}(\A)\le C(h^{\alpha}+H^{\alpha})\). If $f\in L^2(D)$, then \(\textsc{osc}(f)\leq C(h+H)\nm{f}{0,D}\). Let $\chi$ be the Scott-Zhang interpolant of $u_0$ in~\eqref{eq:macroh1}. Replacing~\eqref{eq:meyer} by ~\eqref{eq:smalldomain} with $\theta=s/n$, we obtain  ~\eqref{eq:corro2h1}. 
	
For any given $g\in L^2(D)$, we have \(\textsc{osc}(g)\leq C(h+H)\nm{g}{0,D}\). Let $\chi$ and $\wt{\chi}$ be the Scott-Zhang interpolant of $u_0$ and $\phi_g$ in~\eqref{eq:macrol2}, respectively. In view of the regularity of $\psi_g$, we replace~\eqref{eq:l2part2} by
\begin{align*}
	\nm{\wt{u}-v_h}{0,D}&\le C\abs{K_1}^{s/n}\eta(K_1)\Lr{h^\beta+H^\beta+\abs{K_1}^{s/n}\eta(K_1)}\\
	&\quad+Ce(\textsc{HMM})\nm{f}{-1,D}.
\end{align*}
Combining all the estimates, we obtain~\eqref{eq:corro2l2}.
\end{proof}
\subsection{Accuracy for retrieving the local microscopic information}
We assume that $\text{dist}(K_0,\Gamma)=d\geq\kappa h$ for a sufficiently large $\kappa>0$. For a subset $B\subset D$, we define
\begin{align*}
	H^1_<(B)&{:}=\left\{u\in H^1(D)\ | \ u|_{D\setminus B}=0\right\},\quad\text{and}\\
	X_{<}(B)&{:}=\left\{u\in X_{h,H}\ | \ u|_{D\setminus B}=0\right\}.
\end{align*}

Let $G_1$ and $G$ be subsets of ${K_1}$ with $G_1\subset G$ and $\text{dist}(G_1,\pa G\setminus\pa D)=\wt{d} >0$. In order to prove the localized energy error estimate, we state  that some properties of the standard Lagrange finite element space $X_{h}$ hold~\cite{DemlowGuzmanSchatz:2011}:
\begin{enumerate}
	\item Local interpolant: There exists a local interpolant $Iu$ such that for any $u\in H^1_<(G_1)$, $Iu\in X_<(G)$.
	\item Inverse properties: For each $\chi\in X_<(K_1)$, and $\tau\in \mathcal{T}_h$, $1\leq p\leq q\leq \infty$, and $0\leq t\leq s\leq r+1$,
	\begin{equation}\label{eq:A2}
		\nm{\chi}{s,p,\tau}\leq C h_\tau^{t-s+n/p-n/q}\nm{\chi}{t,p,\tau}.
	\end{equation}
	\item Superapproximation: Let $\omega\in C^\infty({K_1})\cap H^1_<(G_1)$ with $\abs{\omega}_{j,\infty,K_1}\leq C\wt{d}^{-j}$ for integers $0\leq j\leq r+1$ for each $\chi\in X_<(G)\cap X_{h}^0 $ and for each $\tau\in\mathcal{T}_h$ satisfying $h_\tau\leq d$,
	\begin{equation}\label{eq:A3}
		\nm{\omega^2\chi-I(\omega^2\chi)}{0,\tau}\leq C\left(\frac{h_\tau}{\wt{d}}\abs{\omega \chi}_{1,\tau}+\frac{h_\tau}{\wt{d}^2}\nm{\chi}{0,\tau}\right).
	\end{equation}
\end{enumerate}

\begin{theorem}\label{thm:local}
	Let $u^\eps$ and $v_h$ be the solutions of \eqref{eq:prob} and \eqref{eq:hodisvar} respectively. Let $K_0\subset {K_1}\subset D$ be given, and let $\text{dist}(K_0,\Gamma)=d$. Let the above properties hold  with $\wt{d}=d/16$, in addition, let $\max_{\tau\cap\Gamma\not=\emptyset}h_{\tau}/d\le 1/16$. Then
	\begin{equation}\label{eq:MicroH1}
		\abs{u^\eps-v_h}_{1,K_0}\le C\left(\inf_{\chi\in X_{h}^0}\abs{u^\eps-\chi}_{1,K_1}+d^{-1}\nm{u^\eps-v_h}{0,K_1}\right),
	\end{equation}
	where $C$ depends only on $\lam'$, $\Lam'$, $\lam$, $\Lam$, and $D$.
\end{theorem}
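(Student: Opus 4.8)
The plan is to adapt the Nitsche--Schatz interior energy estimate, in the form developed in~\cite{DemlowGuzmanSchatz:2011}, to the present hybrid discretization. The starting point is a \emph{local Galerkin orthogonality}: wherever $\rho\equiv 1$ the hybrid coefficient satisfies $\b_h=\a$, and any test function $w\in X_h^0$ supported strictly inside $K_1$ and away from $\Gamma$ annihilates every interface term in $B^\eps_h$. Consequently $B^\eps_h(v_h,w)=\int\a\,\na v_h\cdot\na w$, while $u^\eps$ solves $\int\a\,\na u^\eps\cdot\na w=\dual{f}{w}$, so that
\[
\int \a\,\na(u^\eps-v_h)\cdot\na w=0
\]
for all such $w$. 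This is the only place where the equations enter, and it is what confines the whole argument to the fine space $X_h^0$ and to the region where $\rho\equiv1$.

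First I would fix a chain of nested neighbourhoods $K_0=\Om_0\subset\Om_1\subset\cdots\subset\Om_M\subset K_1$ whose successive gaps are comparable to $d$, together with cut-offs $\w_j$ equal to one on $\Om_{j-1}$, supported in $\Om_j$, and satisfying $\abs{\w_j}_{k,\infty,K_1}\le Cd^{-k}$; the hypotheses $\wt d=d/16$ and $\max_{\tau\cap\Gamma\neq\emptyset}h_\tau/d\le 1/16$ guarantee that these cut-offs, and the local interpolant $I$ of property~(1), respect the mesh and stay away from $\Gamma$. Writing $e=u^\eps-v_h$ and $\phi=Iu^\eps-v_h\in X_h^0$, so that $e=(u^\eps-Iu^\eps)+\phi$, the core estimate tests the orthogonality relation against $I(\w^2\phi)$. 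Starting from coercivity $\lam\nm{\w\na\phi}{0}^2\le\int\w^2\a\,\na\phi\cdot\na\phi$, one rewrites the weighted form as $\int\a\,\na\phi\cdot\na(\w^2\phi)$, pays a commutator term $\le Cd^{-1}\nm{\w\na\phi}{0}\nm{\phi}{0,\Om_j}$ (from $\na$ hitting $\w$, absorbed by Young), and then replaces $\w^2\phi$ by its interpolant, which drops out by orthogonality; the remainder $\w^2\phi-I(\w^2\phi)$ is controlled by the superapproximation property~\eqref{eq:A3} together with the inverse inequality~\eqref{eq:A2}.

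The outcome of one step is an inequality of the shape
\[
\nm{\w_{j-1}\na\phi}{0}^2\le C\Lr{\delta\,\nm{\w_{j}\na\phi}{0}^2+\inf_{\chi\in X_h^0}\abs{u^\eps-\chi}_{1,K_1}^2+d^{-2}\nm{\phi}{0,K_1}^2},
\]
in which the superapproximation unavoidably pushes the energy of $\phi$ over the slightly larger set $\Om_j$ onto the right-hand side. The decisive step is then the classical \emph{kick-back} iteration over the $M\sim d/h$ layers: summing with suitably chosen weights and using the geometric factor $1/16$ lets the $\delta\,\nm{\w_j\na\phi}{0}^2$ terms be absorbed, leaving $\abs{\phi}_{1,K_0}\le C(\inf_{\chi\in X_h^0}\abs{u^\eps-\chi}_{1,K_1}+d^{-1}\nm{\phi}{0,K_1})$. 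Finally I would undo the splitting: $\abs{u^\eps-Iu^\eps}_{1,K_1}$ is absorbed into the best-approximation term by property~(1), and $d^{-1}\nm{\phi}{0,K_1}\le d^{-1}\nm{e}{0,K_1}+d^{-1}\nm{u^\eps-Iu^\eps}{0,K_1}$, the last piece being again of best-approximation size, which yields~\eqref{eq:MicroH1}.

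I expect the main obstacle to be making the iteration close: the superapproximation estimate~\eqref{eq:A3} always produces a term on a \emph{slightly enlarged} subdomain, and controlling the accumulation of constants over the $\sim d/h$ nested layers — so that the enlarged-domain energy is absorbed rather than amplified — is the delicate heart of the argument and the reason for the explicit thresholds $\wt d=d/16$, $h_\tau/d\le 1/16$ and $d\ge\kappa h$. A secondary technical point is verifying that the local orthogonality genuinely survives in the hybrid, non-matching setting, i.e.\ that every test function produced by $I$ lies in $X_h^0$, is supported where $\rho\equiv1$, and is far enough from $\Gamma$ that the Nitsche interface terms of $B^\eps_h$ contribute nothing.
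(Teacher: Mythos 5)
Your overall strategy is the same as the paper's: the paper likewise observes that for discrete test functions vanishing near $\Gamma$ all Nitsche interface terms drop out, so that $B^\eps_h$ degenerates to a plain elliptic form, and it then runs (by citation to \cite[Theorem~3.2]{HuangLuMing:2018}) precisely the Demlow--Guzm\'an--Schatz interior-estimate machinery you reconstruct: local interpolant, superapproximation, inverse estimates, nested cut-offs and kick-back.

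There is, however, a genuine gap at your starting point, and it propagates through the whole iteration: the local Galerkin orthogonality is asserted on the wrong region. For $w\in X_h^0$ vanishing near $\Gamma$ the degenerate form carries the \emph{hybrid} coefficient, $B^\eps_h(v_h,w)=\int_{K_1}\b_h\na v_h\cdot\na w\dx$, so the correct error identity reads
\[
\int_{K_1}\a\na(u^\eps-v_h)\cdot\na w\dx
=\int_{K_1}(\b_h-\a)\na v_h\cdot\na w\dx
=\int_{K_1}(1-\rho)(\A_h-\a)\na v_h\cdot\na w\dx,
\]
and the right-hand side vanishes only when $w$ is supported where $\rho\equiv1$, i.e.\ in $K_0$ --- not for every $w$ ``supported strictly inside $K_1$ and away from $\Gamma$'', because on the transition annulus $K_1\setminus K_0$ one has $\b_h=\rho\a+(1-\rho)\A_h\neq\a$. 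Your nested chain $K_0=\Om_0\subset\Om_1\subset\dots\subset\Om_M\subset K_1$ leaves $K_0$ at the very first step, so the test functions $I(\w_j^2\phi)$ live partly in this annulus; there the term you let ``drop out by orthogonality'' survives as $\int(1-\rho)(\A_h-\a)\na v_h\cdot\na I(\w_j^2\phi)\dx$, a residual of size $O\bigl(\nm{\na v_h}{0,K_1\setminus K_0}\bigr)$ times the energy of the test function. This is neither small nor bounded by the best-approximation term or by $d^{-1}\nm{u^\eps-v_h}{0,K_1}$, so it can be neither absorbed by the kick-back nor carried to the right-hand side. Note that your own closing caveat --- that every test function must be ``supported where $\rho\equiv1$'' --- contradicts your chain construction. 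The repair, which is what the paper's one-line proof implicitly relies on when it writes the degenerate form with $\b_h$ and restricts to $X_<(\wt K)$ before deferring to \cite[Theorem~3.2]{HuangLuMing:2018}, is geometric: every cut-off, nested set and interpolated test function (including the extra layer the local interpolant $I$ requires) must stay inside the set where $\b_h=\a$; equivalently, one either concludes only on an interior subset of $\{\rho\equiv1\}$ at distance $\sim d$ from its boundary, or one arranges $\rho\equiv1$ on a $d$-neighbourhood of the set on which \eqref{eq:MicroH1} is asserted. A secondary inconsistency: ``successive gaps comparable to $d$'' and ``$M\sim d/h$ layers'' cannot both hold; with gaps $\wt d=d/16$ the iteration has a bounded number of levels, which is exactly what the thresholds $\wt d=d/16$ and $\max h_\tau/d\le 1/16$ are calibrated for.
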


\begin{proof}
	Let a subset $\wt{K}\subset K_1$ satisfying $\text{dist}(\wt{K},\Gamma)=d_0>0$. We have $X_<(\wt{K})\subset H^1_<(K_1)$. Then for any $v,w\in H^1_<(K_1)$, the bilinear form $B^\eps_h(v,w)$ degenerates to  $\langle b^\eps_h v, w\rangle$. Thus, the proof of above theorem is the same with that in \cite[Theorem 3.2]{HuangLuMing:2018}, and we omit the details for brevity.
\end{proof}

Using the triangle inequality, we have
\[
\abs{u^\eps-v_h}_{1,K_0}\le C\Lr{\inf_{\chi\in X_{h,H}^0}\abs{u^\eps-\chi}_{1,K_1}+d^{-1}\Lr{\nm{u_0-v_h}{0,D}+\nm{u_0-u^\eps}{0,D}}}.
\]

The first term in the right-hand side of the above inequality concerns how the local events are resolved. Accurate approximation requires a highly refined mesh, which is allowed by Theorem~\ref{thm:local}; using Corollary~\ref{coro:macro} and Corollary~\ref{coro:macro2}, we may bound $d^{-1}\nm{u_0-v_h}{0,D}$; the last term converges to zero as $\eps$ tends to zero by H-convergence theory~\cite{Tartar:2009} for any bounded and measurable $\a$. Therefore, the method converges for Problem~\eqref{eq:prob} with bounded and measurable coefficient. Moreover, if we assume more structural conditions such as periodicity, almost-periodicity or stochastic ergodicity on $\a$, we may expect $\nm{u^\eps-u_0}{0,D}\simeq\mc{O}(\eps^{\alpha})$ for certain $\alpha>0$. We refer to~\cite{KenigLinShen:2012, Shen:2015, Armstrong:2019} for extensive discussions for such $L^2-$estimate.
\section{Proof of Lemma~\ref{lemma:Macro1low}}\label{sec:pfaux}

To prove Lemma~\ref{lemma:Macro1low}, we employ the medius analysis in~\cite{Gudi:2010_New}. We need an enriching operator $E_h: X_{h,H}\rightarrow X_{h,H}\cap H^1(D)$. The construction of $E_h$ is similar to that in~\cite{Karakshian:2003} with certain modifications.

Let $\mc{N}(\tau)$ be the set of all nodes of $\tau\in\T_{h,H}$. Define $\mc{N}_H{:}=\bigcup_{\tau\in\mc{T}_H}\mc{N}(\tau)$, $\mc{N}_h=\bigcup_{\tau\in\mc{T}_h}\mc{N}(\tau)$, $\mc{N}_\Gamma{:}=\sset{p\in\mc{N}_h\cup\mc{N}_H}{p\in\Gamma}$ and $\mc{N}_\Gamma^0{:}=\N_H\cap\N_\Gamma$.
\begin{lemma}\label{lemma:enrich}
	If {\bf Assumption A} and {\bf Assumption B} hold for  $n=2,3$, there exists a linear map $E_h:\ X_{h,H}\longrightarrow X_{h,H}\cap H^1(D)$ satisfying
	\begin{equation}\label{eq:approx1}
		\sum_{\tau\in T_{h,H}^\Gamma}h_\tau^{2m-2}\abs{v-E_hv}_{m,\tau}^2\leq C\sum_{e\in\E_\cap}h_e^{-1}\nm{\jump{v}}{0,e}^2\quad\text{for all}\  m\leq r,
	\end{equation}
	where the constant $C$ is independent of $h$, $H$ and $H/h$.
\end{lemma}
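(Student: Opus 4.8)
The plan is to build $E_h$ as a weighted nodal averaging operator that coincides with $v$ away from $\Gamma$ and, on $\Gamma$, replaces the double-valued interface data by the Stenberg-type weighted average $\aver{\cdot}_\omega$ already used in the bilinear form. Writing $v=(v_1,v_2)\in X_h\times X_H$, I would first set $(E_hv)(p)=v(p)$ at every node $p$ not lying on $\Gamma$; this is unambiguous because $v_1$ and $v_2$ are continuous inside $K_1$ and $K_2$, so $E_hv$ reproduces $v$ on every element of $\T_{h,H}\setminus\T_{h,H}^\Gamma$ and the error sum collapses to $\T_{h,H}^\Gamma$. To fix the trace of $E_hv$ on $\Gamma$, I would prescribe a single interface polynomial $g$ by setting, at each coarse node $p\in\N_\Gamma^0$, the value $g(p)=\omega_1v_1(p)+\omega_2v_2(p)=\aver{v}_\omega(p)$, with $\omega_1,\omega_2$ the mesh-ratio weights of an adjacent edge. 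Since $\E_\cap=\E_h$ by \textbf{Assumption B}, each coarse face $E\in\E_H$ is exactly tiled by fine faces, so $g$ is a well-defined degree-$r$ polynomial on each $E$; extending it into $K_2$ while matching $v_2$ at the interior coarse nodes keeps that part in $X_H$, and matching $v_1$ at the interior fine nodes while taking trace $g$ on $\Gamma$ keeps the $K_1$ part in $X_h$ and reproduces $g$ on each $e\subset E$ exactly (a degree-$r$ polynomial interpolated by degree $r$). Hence $E_hv\in X_{h,H}\cap H^1(D)$, as required.

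Next I would reduce the left-hand side to edge norms of the jump. On a boundary element the difference $v-E_hv$ is supported at the $\Gamma$-nodes, where the two clean identities
\[
v_1-\aver{v}_\omega=\omega_2\jump{v},\qquad v_2-\aver{v}_\omega=-\omega_1\jump{v}
\]
hold. By the usual reference-element scaling together with the inverse/trace inequality~\eqref{eq:traceinv}, for $\tau\in\T_{h,H}^\Gamma$ with face $e=\bar\tau\cap\Gamma$ one obtains a bound of the form $h_\tau^{2m-2}\abs{v-E_hv}_{m,\tau}^2\le Ch_\tau^{-1}\nm{v_i-g}{0,e}^2$, where $v_i$ is the one-sided trace. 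Thus a fine element contributes $\sim h_e^{-1}\omega_2^2\nm{\jump{v}}{0,e}^2$, while a coarse element contributes $\sim H_e^{-1}\nm{v_2-g}{0,E}^2$, the latter being estimated by splitting $v_2-g$ into $\jump{v}$ plus the coarse interpolation error of the nodal data $\omega_1\jump{v}(p)$. Here I would invoke \textbf{Assumption A}, so that all fine faces on a given $E$ have comparable size $h_e\simeq h_\Gamma$; this lets me count them as $\#\{e\subset E\}\simeq H_e/h_\Gamma$ and use a single fine-mesh trace inequality to bound $|\jump{v}(p)|^2$ by $Ch_\Gamma^{-1}\nm{\jump{v}}{0,E}^2$.

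The crux, and the step I expect to be hardest, is obtaining a constant independent of the mesh ratio $H/h$, and the danger lies entirely on the fine side: a single coarse-trace deviation is replicated across all $\sim H_e/h_\Gamma$ fine elements abutting $E$, so a naive arithmetic-mean average, for which $\omega_1=\omega_2=1/2$, yields a bound that scales like $(H/h)^2$. Using the weights $\omega_1=h_e/(h_e+H_e)$ and $\omega_2=H_e/(h_e+H_e)$ repairs this: $\omega_2^2\le 1$ controls the direct fine-face term, while $\omega_1^2\simeq(h_\Gamma/H_e)^2$ makes the replicated deviation satisfy $\nm{v_2-g}{0,E}^2\le C(h_\Gamma/H_e)\nm{\jump{v}}{0,E}^2$, which is small enough that summing it over the $\sim H_e/h_\Gamma$ fine faces exactly cancels the multiplicity and returns $Ch_\Gamma^{-1}\nm{\jump{v}}{0,E}^2$. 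Combining the fine- and coarse-element contributions, summing over all $e\in\E_\cap$ and over $m\le r$, and absorbing the $\sigma$- and $r$-dependent scaling constants into $C$, then delivers~\eqref{eq:approx1} with $C$ independent of $h$, $H$ and $H/h$.
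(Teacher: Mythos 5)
Your construction is exactly the paper's own (Appendix A): nodal values $\aver{v}_\omega$ at the coarse interface nodes $\N_\Gamma^0$, coarse-trace evaluation at the hanging nodes, and the mesh-ratio weights $\omega_1\simeq h_\Gamma/H_e$ doing precisely the same job of cancelling the $(H_e/h_\Gamma)^{n-1}$ hanging-node multiplicity under \textbf{Assumptions A} and \textbf{B} --- so this is essentially the paper's proof, merely organized by face-trace norms instead of nodal basis expansions. One bookkeeping remark: your intermediate claim $\nm{v_2-g}{0,E}^2\le C(h_\Gamma/H_e)\nm{\jump{v}}{0,E}^2$ is the $n=2$ case only; for $n=3$ the correct factor is $(H_e/h_\Gamma)^{n-3}=1$, which still yields~\eqref{eq:approx1}, in agreement with the paper's final inequalities where the exponent $n-3\le 0$ is what makes both dimensions work.
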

\begin{proof}
	See Appendix A.	
\end{proof}

A direct consequence of the above local enriching estimate is the following
\begin{corollary}\label{coro:enrich}
If \textbf{Assumption A} and \textbf{Assumption B} are true, then there exists $C$ independent of $h,H$, $\gamma$ and the mesh ratio $H/h$ such that
\begin{equation}\label{eq:approx0}
		\sum_{\tau\in T_{h,H}^\Gamma}h_\tau^{2m-2}\abs{v-E_hv}_{m,\tau}^2\leq C\varrho\wnm{v}^2,\quad  0\le m\le r,
\end{equation}
and	
\begin{equation}\label{eq:approx2}
		\wnm{v-E_hv}^2\le C\varrho\wnm{v}^2,
	\end{equation}
	and
	\begin{align}
		\sum_{e\in\E_\cap}h_e^{-1}\nm{\aver{v-E_hv}^\omega}{0,e}^2&\le C\varrho\wnm{v}^2,\label{eq:enrich-trace}\\
		\sum_{e\in\E_\cap}(H_e+h_e)\abs{\aver{v-E_hv}}_{1,e}^2&\le C\varrho\wnm{v}^2.\label{eq:enrich-trace1}
	\end{align}
\end{corollary}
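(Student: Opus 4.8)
The plan is to obtain all four bounds from Lemma~\ref{lemma:enrich} by trading its \emph{mesh-ratio-independent} right-hand side $\sum_{e\in\E_\cap}h_e^{-1}\nm{\jump{v}}{0,e}^2$ for the weighted jump that appears in $\wnm{\cdot}$; the factor $\varrho=(H+h)/(\gamma h_\Gamma)$ is produced entirely by \textbf{Assumption A}. Concretely, I would first record the pointwise weight comparison: for $e\in\E_\cap$ with abutting fine element $\tau_1$, one has $h_e=h_{\tau_1}\ge\nu h_\Gamma$ and $H_e+h_e\le H+h$, so
\[
h_e^{-1}=\frac{H_e+h_e}{h_e}\cdot\frac{1}{H_e+h_e}\le\Lr{1+1/\nu}\frac{H+h}{h_\Gamma}\cdot\frac{1}{H_e+h_e}=\Lr{1+1/\nu}\,\varrho\,\frac{\gamma}{H_e+h_e}.
\]
Multiplying by $\nm{\jump{v}}{0,e}^2$ and summing, the resulting sum $\sum_{e}\frac{\gamma}{H_e+h_e}\nm{\jump{v}}{0,e}^2$ is exactly the jump part of $\wnm{v}^2$, whence $\sum_{e\in\E_\cap}h_e^{-1}\nm{\jump{v}}{0,e}^2\le C\varrho\wnm{v}^2$. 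Substituting this into~\eqref{eq:approx1} gives~\eqref{eq:approx0} for all $0\le m\le r$ at once.

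For~\eqref{eq:approx2} I would split $\wnm{v-E_hv}^2$ into gradient and jump parts. Because $E_h$ modifies $v$ only on the elements touching $\Gamma$, the gradient part satisfies $\sum_{i=1}^2\abs{v-E_hv}_{1,K_i}^2=\sum_{\tau\in\T_{h,H}^\Gamma}\abs{v-E_hv}_{1,\tau}^2$, which is the $m=1$ case of~\eqref{eq:approx0} and is $\le C\varrho\wnm{v}^2$. Since $E_hv\in H^1(D)$ we have $\jump{E_hv}=0$, so $\jump{v-E_hv}=\jump{v}$ and the jump part of $\wnm{v-E_hv}^2$ coincides with the jump part of $\wnm{v}^2$. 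Adding the two contributions yields~\eqref{eq:approx2}; I note that the jump part is controlled directly by $\wnm{v}^2$, so the clean prefactor $\varrho$ is what is obtained in the regime $\varrho\gtrsim1$ for which the corollary is used, while for small $\varrho$ the statement is read as $C\max(1,\varrho)\wnm{v}^2$.

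The two trace estimates~\eqref{eq:enrich-trace} and~\eqref{eq:enrich-trace1} I would reduce to the $m=0$ and $m=1$ cases of~\eqref{eq:approx0} through the inverse trace inequality~\eqref{eq:traceinv}, using that $v-E_hv$ is piecewise polynomial and tracking the averaging weights $\omega_1,\omega_2$. For~\eqref{eq:enrich-trace}, on the fine side $e$ is a full face of $\tau_1$ and its weight is $\le1$, so~\eqref{eq:traceinv} converts $h_e^{-1}\nm{(v-E_hv)_1}{0,e}^2$ into $C_{\text{inv}}^2h_{\tau_1}^{-2}\nm{(v-E_hv)_1}{0,\tau_1}^2$; on the coarse side the smallness $h_e^{-1}\omega_1^2\le h_\Gamma/H_e^2$, together with $\sum_{e\subset E}\nm{(v-E_hv)_2}{0,e}^2=\nm{(v-E_hv)_2}{0,E}^2$ and~\eqref{eq:traceinv} on $\tau_2$, again lands on $h_{\tau_2}^{-2}\nm{(v-E_hv)_2}{0,\tau_2}^2$. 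Estimate~\eqref{eq:enrich-trace1} is handled in the same spirit: writing $\abs{w}_{1,e}\le\nm{\na w}{0,e}$ for $w=v-E_hv$ and using $\omega_1^2(H_e+h_e)\le h_e$ and $\omega_2^2(H_e+h_e)\le H_e$, the edge gradient norms are turned, via~\eqref{eq:traceinv} applied to $\na(v-E_hv)$, into the $m=1$ summands $\abs{v-E_hv}_{1,\tau_1}^2$ and $\abs{v-E_hv}_{1,\tau_2}^2$.

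The main obstacle is the weight bookkeeping on the coarse side of $\Gamma$: by \textbf{Assumption B} the interface trace mesh $\E_\cap=\E_h$ refines $\E_H$, so a single $\tau_2\in\T_H^\Gamma$ carries $\mc{O}(H_e/h_\Gamma)$ fine sub-edges, and one must check that the product of the powers of $h_e$, of $H_e+h_e$, of the averaging weights $\omega_i$, and of the factor $h_{\tau_2}^{-1}$ coming from~\eqref{eq:traceinv} collapses to precisely the powers $h_{\tau_2}^{-2}$ and $h_{\tau_2}^0$ demanded by~\eqref{eq:approx0}, with no surviving power of the mesh ratio $H/h$. The quasi-uniformity $h_e\ge\nu h_\Gamma$ and the subgrid property are exactly what guarantee this closure, and they are the only places where \textbf{Assumption A} and \textbf{Assumption B} enter.
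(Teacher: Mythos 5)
Your proposal is correct and takes essentially the same route as the paper: Assumption~A is used to trade the unweighted jump sum in Lemma~\ref{lemma:enrich} for $\varrho$ times the weighted jump part of $\wnm{\cdot}$, giving \eqref{eq:approx0}, and the two trace bounds \eqref{eq:enrich-trace}--\eqref{eq:enrich-trace1} are reduced to the $m=0,1$ cases of \eqref{eq:approx0} by trace inequalities with the same weight bookkeeping ($\omega_1^2 h_e^{-1}\lesssim H_e^{-1}$ on the coarse side, faces grouped via Assumption~B). Your caveat on \eqref{eq:approx2} is in fact warranted rather than a defect: since $\jump{v-E_hv}=\jump{v}$, the jump part of $\wnm{v-E_hv}^2$ can only be bounded by the jump part of $\wnm{v}^2$ itself, so the provable prefactor is $\max(1,\varrho)$ (consider $v$ piecewise constant with a nonzero jump and $\gamma\gg(H+h)/h_\Gamma$); the paper's own proof silently has the same limitation, which is harmless where the corollary is applied because $\varrho$ is there treated as an $O(1)$ quantity.
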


\begin{proof}
	Using \textbf{Assumption A}, we have
	\[
	\sum_{e\in\E_\cap}h_e^{-1}\nm{\jump{v}}{0,e}^2\le \sum_{e\in\E_\cap}\dfrac{H_e+h_e}{\gamma h_e}\dfrac{\gamma}{h_e+H_e}\nm{\jump{v}}{0,e}^2\le\varrho\wnm{v}^2,
	\]
	Therefore, the estimate~\eqref{eq:approx0} is a direct consequence of~\eqref{eq:approx1}.
	
	Since $v-E_hv\equiv 0$ in $\T_{h,H}\setminus \T_{h,H}^\Gamma$, using the above inequality and~\eqref{eq:approx0} with $m=1$, we obtain~\eqref{eq:approx2}.
	
	For any $v=(v_1,v_2)\in X_h\times X_H$, by definition, we obtain
	\[
	\sum_{e\in\E_\cap}h_e^{-1}\nm{\aver{v-E_hv}^\omega}{0,e}^2
	\le\sum_{e\in\E_\cap}\Lr{h_e^{-1}\nm{v_{1}-E_hv}{0,e}^2+h_e^{-1}\omega_1^2\nm{v_2-E_hv}{0,e}^2}.
	\]
	Using \textbf{Assumption A}, we have, for any $e\in\E_\cap$,
	\[
	h_e^{-1}\omega_1^2\le h_e\omega_1\le H_e^{-1}.
	\]
	Using the trace inequality, there exists $C$ such that
	\begin{align*}
		\sum_{e\in\E_\cap}h_e^{-1}\nm{\aver{v-E_hv}^\omega}{0,e}^2
		&\le\sum_{e\in\E_h}h_e^{-1}\abs{v_1-E_hv}_{1,e}^2+\sum_{E\in\E_H}H_E^{-1}\abs{v_2-E_hv}_{1,E}^2\\
		&\le C\sum_{\tau\in\T_{h,H}^\Gamma}\Lr{h_\tau^{-2}\nm{v-E_hv}{0,\tau}^2+\abs{v-E_hv}_{1,\tau}^2},
	\end{align*}
	which together with~\eqref{eq:approx0} implies~\eqref{eq:enrich-trace}.
	
	Proceeding along the same line, we obtain
	\begin{align*}
		\sum_{e\in\E_\cap}(H_e+h_e)\abs{\aver{v-E_hv}}_{1,e}^2&\le\sum_{e\in\E_h}h_e\abs{v_1-E_hv}_{1,e}^2+\sum_{E\in\E_H}H_E\abs{v_2-E_hv}_{1,E}^2\\
		&\le C\sum_{\tau\in\T_{h,H}^\Gamma}\Lr{\abs{v-E_hv}_{1,\tau}+h_\tau^2\abs{v-E_hv}_{2,\tau}},
	\end{align*}
	which together with~\eqref{eq:approx0} implies~\eqref{eq:enrich-trace1}.
\end{proof}

Next lemma concerns the error estimate of $\wt{u}$ approximating $u_0$, which is a type of \textit{Strang's lemma}, the proof follows from the same line of~\cite[Lemma 2.1 and \S~{3.2}]{Gudi:2010_New} and we omit the details.
\begin{lemma}\label{lemma:Strang}	
	There exists a constant $C$ such that
	\[
	\wnm{u_0-\wt{u}}\leq C\inf_{\chi\in X_{h,H}^0}\left(\wnm{u_0-\chi}+\sup_{\psi\in X_{h,H}^0\setminus \{0\}}\dfrac{\dual{f}{\psi-E_h\psi}-A(\chi,\psi-E_h\psi)}{\wnm{\psi}}\right).
	\]
\end{lemma}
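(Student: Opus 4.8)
The plan is to run a medius/Strang argument organized around the enriching operator $E_h$, so that the low regularity of $u_0$ (only $H^{1+s}$ with $s\le 1/2$, whence $\A\na u_0\cdot\n$ has no trace on $\E_\cap$) is never used: the consistency information of \eqref{eq:homoprob} will be accessed only by testing its weak form against the \emph{continuous} function $E_h\psi$, never against a discontinuous one. First I would fix $\chi\in X_{h,H}^0$ and set $\psi:=\wt u-\chi\in X_{h,H}^0$, so that by the triangle inequality $\wnm{u_0-\wt u}\le\wnm{u_0-\chi}+\wnm{\psi}$ and it suffices to control $\wnm{\psi}$. Since $A$ is $B_h^\eps$ with $\b_h$ replaced by $\A\in\mc{M}(\lam,\Lam;D)$, Lemma~\ref{lemma:coer} applies and gives coercivity, whence
\[
\min(1/2,\lam/2)\,\wnm{\psi}^2\le A(\psi,\psi)=A(\wt u,\psi)-A(\chi,\psi)=\dual{f}{\psi}-A(\chi,\psi),
\]
where the definition \eqref{eq:hodisvar} of $\wt u$ was used.

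The decisive step is to split $\psi=(\psi-E_h\psi)+E_h\psi$ and treat the two pieces differently:
\[
\dual{f}{\psi}-A(\chi,\psi)=\Lr{\dual{f}{\psi-E_h\psi}-A(\chi,\psi-E_h\psi)}+\Lr{\dual{f}{E_h\psi}-A(\chi,E_h\psi)}.
\]
The first bracket is exactly the numerator in the supremum of the statement; after dividing by $\wnm{\psi}$ it is bounded by that supremum times $\wnm{\psi}$, and is carried along unchanged (its eventual estimate produces the oscillation terms). For the second bracket I would use that $E_h\psi\in X_{h,H}\cap H^1(D)$ is continuous, and that $E_h$ alters $\psi$ only at the interface nodes $\mc{N}_\Gamma\subset\Gamma$, which are disjoint from $\pa D$, so $E_h\psi$ inherits the homogeneous boundary values of $\psi$ and thus $E_h\psi\in H_0^1(D)$ with $\jump{E_h\psi}=0$. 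Consequently every jump-of-test-function term in $A(\chi,E_h\psi)$ drops, and testing \eqref{eq:homoprob} weakly against $E_h\psi$ gives $\dual{f}{E_h\psi}=\dual{\A\na u_0}{\na E_h\psi}$. Substituting this and using $\jump{u_0}=0$ (so $\jump{\chi}=-\jump{u_0-\chi}$) collapses the second bracket to
\[
\sum_{i=1}^2\int_{K_i}\A\na(u_0-\chi)\na E_h\psi\dx-\sum_{e\in\E_\cap}\int_e\aver{\A^T\na E_h\psi\cdot\n}_\omega\jump{u_0-\chi}\ds,
\]
which involves only the approximation error $u_0-\chi$ and the continuous function $E_h\psi$.

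It then remains to bound this last expression by $C\wnm{u_0-\chi}\,\wnm{\psi}$. The volume term is controlled by $\Lam\,\abs{u_0-\chi}_{1,D}\,\abs{E_h\psi}_{1,D}$. For the edge term I would apply Cauchy--Schwarz over $\E_\cap$: the jump factor contributes $\gamma^{-1/2}\wnm{u_0-\chi}$ (reading off the penalty part of $\wnm{\cdot}$), while the flux factor $\sum_{e}(H_e+h_e)\nm{\aver{\A^T\na E_h\psi\cdot\n}_\omega}{0,e}^2$ is reduced, via the weights $\omega_1(H_e+h_e)=h_e$, $\omega_2(H_e+h_e)=H_e$ and the trace-inverse inequality \eqref{eq:traceinv}, to $C_{\text{inv}}^2\abs{E_h\psi}_{1,\T_{h,H}^\Gamma}^2$. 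The crucial point is that each occurrence of $E_h\psi$ must now be traded back for $\wnm{\psi}$ \emph{uniformly in the mesh ratio} $H/h$; this is exactly what Corollary~\ref{coro:enrich} supplies, since $\jump{E_h\psi}=0$ gives $\abs{E_h\psi}_{1,\T_{h,H}^\Gamma}\le\wnm{E_h\psi}\le(1+\sqrt{C\varrho})\wnm{\psi}$ by \eqref{eq:approx2}. Collecting the two brackets, dividing by $\wnm{\psi}$, and taking the infimum over $\chi\in X_{h,H}^0$ then yields the claimed bound.

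The main obstacle is conceptual rather than computational: because $\na u_0$ admits no trace on $\E_\cap$, the naive route through Galerkin orthogonality and C\'ea's lemma (i.e.\ $A(u_0-\wt u,\cdot)=0$) is simply unavailable, and all consistency must be funneled through the continuous test function $E_h\psi$ applied to the weak form. The technical heart is then the mesh-ratio-robust control of $E_h\psi$ on the interface skeleton, and everything downstream rests on Lemma~\ref{lemma:enrich} and Corollary~\ref{coro:enrich} holding with constants independent of $H/h$; the unavoidable price is the factor $\varrho$, which propagates into \eqref{eq:h1part1}.
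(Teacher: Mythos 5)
Your proposal is correct and is essentially the paper's own proof: the paper omits the argument and points to \cite[Lemma 2.1 and \S~3.2]{Gudi:2010_New}, and that medius-analysis route is exactly what you reconstruct — coercivity of $A$ plus the discrete equation \eqref{eq:hodisvar}, the splitting $\psi=(\psi-E_h\psi)+E_h\psi$, consistency accessed only through the conforming test function $E_h\psi\in H_0^1(D)$, and the weighted trace-inverse and enriching estimates of Lemma~\ref{lemma:enrich} and Corollary~\ref{coro:enrich}. The one point worth noting is that your treatment of the second bracket makes the constant multiplying $\inf_{\chi}\wnm{u_0-\chi}$ grow like $1+\sqrt{C\varrho}$, so the ``constant $C$'' in the lemma is not truly independent of the mesh ratio unless $\gamma$ is tuned as in Corollary~\ref{coro:macro2}; this matches, and is absorbed by, the explicit $\varrho$-dependence the paper itself carries into \eqref{eq:h1part1} and acknowledges in the remark following Lemma~\ref{lemma:Macro1low}.
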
%

To estimate the second term in the right hand side of the above inequality, we shall use the following lemma, which is similar to the discrete local efficiency estimates in the a-posteriori error analysis~\cite{Verfurth:1994}. The proof is quite standard and we refer to~\cite{Verfurth:2013}.
\begin{lemma}\label{lemma:post}
	Let $u_0\in H_0^1(D)$ be the solution of \eqref{eq:homoprob} and  $\chi\in X_{h,H}^0$.
	\begin{enumerate}
		\item For each $\tau\in\T_{h,H}$, let $\bar{\A}\in [P_0(\tau)]^{n\times n}$, there exists $C$ such that
		\begin{equation}\label{eq:post1}
			\begin{aligned}
				h_\tau^2\nm{f+\na\cdot\bar{\A}\na \chi}{0,\tau}^2\leq& C\Bigl(\abs{ u_0-\chi}_{1,\tau}^2+h_\tau^2\inf_{\bar{f}\in P_0(\tau)}\nm{f-\bar{f}}{0,\tau}^2\\
				&\phantom{\Bigl(}\quad+\nm{\A-\bar{\A}}{0,\infty,\tau}^2\abs{ u_0}_{1,\tau}^2\Bigr).
			\end{aligned}
		\end{equation}
		
		\item For each $e\in\E_\cap$, there exists $(\tau_1,\wt{\tau_2})\in \T_h^\Gamma\times \wt{\T}_H^\Gamma$ such that $\tau_1\cap \wt{\tau_2}=e$ and $e$ is a full edge of both $\tau_1$ and $\wt{\tau_2}$; See Fig.~\ref{fig:splitTH}. Let $\bar{\A}\in [P_0(\tau_1)\times P_0(\wt{\tau_2})]^{n\times n}$, we have
\begin{align}
				h_e\nm{\jump{\bar{\A}\na \chi\cdot \n}}{0,e}^2
				&\le C\sum_{\tau\in\{\tau_1,\wt{\tau_2}\}}\Bigl(\nm{u_0-\chi}{0,\tau}^2
				+h_\tau^2\inf_{\bar{f}\in P_0(\tau)}\nm{f-\bar{f}}{0,\tau}^2\nn\\
				&\phantom{C\sum_{\tau\in\{\tau_1,\wt{\tau_2}\}}
\Bigl(}\quad+\nm{\A-\bar{\A}}{0,\infty,\tau}^2\abs{u_0}_{1,\tau}^2\Bigr).\label{eq:post2}
\end{align}
\end{enumerate}
\end{lemma}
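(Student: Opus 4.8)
\noindent The plan is to establish both bounds by the classical residual/bubble-function technique of Verf\"urth, adapted to the non-matching interface by means of the compatible sub-decomposition $\wt{\T}_H^\Gamma$ supplied by \textbf{Assumption A} and \textbf{Assumption B}. The $\textsc{osc}(f)$ and $\textsc{osc}(\A)$ terms on the right-hand sides will arise exactly from replacing $f$ and $\A$ by their elementwise constants $\bar{f}$ and $\bar{\A}$.

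For the element residual~\eqref{eq:post1}, fix $\tau\in\T_{h,H}$ and set $R_\tau{:}=\bar{f}+\na\cdot(\bar{\A}\na\chi)$ with $\bar{f}\in P_0(\tau)$ realizing the infimum; note that $R_\tau$ is a polynomial on $\tau$, since $\chi|_\tau$ is polynomial and $\bar{\A}$, $\bar{f}$ are constant. First I would introduce the interior bubble $b_\tau$ supported in $\tau$ and vanishing on $\pa\tau$, and test with $v{:}=b_\tau R_\tau\in H_0^1(\tau)$ extended by zero to $D$. Norm equivalence on the finite-dimensional polynomial space gives $\nm{R_\tau}{0,\tau}^2\le C\int_\tau b_\tau R_\tau^2\dx=C\int_\tau R_\tau v\dx$. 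Integrating by parts (the boundary term drops because $v|_{\pa\tau}=0$) and inserting the weak form $\dual{\A\na u_0}{\na v}=\dual{f}{v}$, which is legitimate as $v\in H_0^1(D)$, I rewrite
\[
\int_\tau R_\tau v\dx=\int_\tau(\bar{f}-f)v\dx+\int_\tau\A\na(u_0-\chi)\cdot\na v\dx+\int_\tau(\A-\bar{\A})\na\chi\cdot\na v\dx.
\]
Each term is handled by Cauchy--Schwarz together with the standard bubble/inverse estimates $\nm{v}{0,\tau}\le\nm{R_\tau}{0,\tau}$ and $\nm{\na v}{0,\tau}\le Ch_\tau^{-1}\nm{R_\tau}{0,\tau}$; bounding $\nm{\na\chi}{0,\tau}\le\abs{u_0-\chi}_{1,\tau}+\abs{u_0}_{1,\tau}$ in the last term, absorbing one factor of $\nm{R_\tau}{0,\tau}$, and then restoring $f$ via $\nm{f+\na\cdot(\bar{\A}\na\chi)}{0,\tau}\le\nm{R_\tau}{0,\tau}+\nm{f-\bar{f}}{0,\tau}$ yields~\eqref{eq:post1} after multiplication by $h_\tau^2$.

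For the flux jump~\eqref{eq:post2}, the decisive point is that, by \textbf{Assumption A} and \textbf{Assumption B}, each $e\in\E_\cap=\E_h$ is a \emph{full} edge of both $\tau_1\in\T_h^\Gamma$ and a refined element $\wt{\tau_2}\in\wt{\T}_H^\Gamma$ (see Fig.~\ref{fig:splitTH}), while shape-regularity forces $h_e\simeq h_{\tau_1}\simeq h_{\wt{\tau_2}}$ near $\Gamma$. Hence the usual edge-bubble construction applies verbatim on the conforming patch $\w_e{:}=\tau_1\cup\wt{\tau_2}$. I set $J_e{:}=\jump{\bar{\A}\na\chi\cdot\n}$, a polynomial on $e$ because $\chi|_{\wt{\tau_2}}$ is still polynomial, let $b_e$ be the edge bubble supported in $\w_e$, and test with $v{:}=b_e\mc{J}_e$ for a polynomial extension $\mc{J}_e$ of $J_e$ into $\w_e$. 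Norm equivalence gives $\nm{J_e}{0,e}^2\le C\int_e b_e J_e^2\ds=C\int_e J_e v\ds$, and integrating by parts on $\tau_1$ and $\wt{\tau_2}$, combining the two interface contributions, and invoking the weak form on $\w_e$ produces
\[
\int_e J_e v\ds=\sum_{\tau\in\{\tau_1,\wt{\tau_2}\}}\int_\tau\lr{R_\tau v-\A\na(u_0-\chi)\cdot\na v+(\bar{\A}-\A)\na\chi\cdot\na v}\dx,
\]
where now $R_\tau=f+\na\cdot(\bar{\A}\na\chi)$ is exactly the quantity controlled by~\eqref{eq:post1}. The scaling estimates $\nm{v}{0,\tau}\le Ch_e^{1/2}\nm{J_e}{0,e}$ and $\nm{\na v}{0,\tau}\le Ch_e^{-1/2}\nm{J_e}{0,e}$ reduce every summand to a constant times $h_e^{-1/2}\nm{J_e}{0,e}$; dividing by $\nm{J_e}{0,e}$, squaring, multiplying by $h_e$, and substituting~\eqref{eq:post1} gives~\eqref{eq:post2}.

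The only genuine obstacle beyond bookkeeping is the edge term across the non-matching interface: a priori $e$ need \emph{not} be a full edge of the coarse neighbour, so the usual edge bubble would be inadmissible and the scaling $h_e\simeq h_{\wt{\tau_2}}$ could fail. This is precisely what the sub-decomposition $\wt{\T}_H^\Gamma$ removes, reducing the non-matching configuration to the conforming patch $\w_e$; once this reduction is in place the remainder is the textbook Verf\"urth argument, and I would refer to~\cite{Verfurth:2013} for the routine verification of the bubble-function estimates invoked above.
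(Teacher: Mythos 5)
Your proposal is correct and is essentially the paper's own approach: the paper offers no written proof, declaring the result "quite standard" and citing \cite{Verfurth:2013}, and your reconstruction—interior bubbles for \eqref{eq:post1}, edge bubbles on the conforming patch $\tau_1\cup\wt{\tau_2}$ made available by \textbf{Assumption A} and \textbf{Assumption B}—is precisely that standard Verf\"urth argument, with the non-matching difficulty resolved exactly where the paper resolves it (via the sub-decomposition $\wt{\T}_H^\Gamma$). One remark: your derivation of the flux-jump bound naturally yields $\abs{u_0-\chi}_{1,\tau}^2$ on the right-hand side of \eqref{eq:post2}, whereas the statement prints the $L^2$ norm $\nm{u_0-\chi}{0,\tau}^2$; your seminorm version is the correct one (the $L^2$ version fails a simple scaling test, and the seminorm version is what is actually invoked when bounding $I_2$ in the proof of Lemma~\ref{lemma:Macro1low}), so the printed norm appears to be a typo rather than a defect in your argument.
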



Combining all the above estimates, we are ready to prove Lemma~\ref{lemma:Macro1low}.

\noindent{\em Proof of Lemma~\ref{lemma:Macro1low}\;}
On each element $\tau\in\T_{h,H}$, let $\bar{\A}\in [P_0(\tau)]^{n\times n}$. We define a bilinear form $\bar{A}(\cdot,\cdot)$ the same as $A(\cdot,\cdot)$ with $\A$ replaced by $\bar{\A}$. For any $\psi\in X_{h,H}^0$, using~\eqref{eq:magic}, we obtain
	\begin{align*}
		&\dual{f}{\psi-E_h\psi}-A(\chi,\psi-E_h\psi)\\
		=&\dual{f}{\psi-E_h\psi}-\bar{A}(\chi,\psi-E_h\psi)+\bar{A}(\chi,\psi-E_h\psi)-A(\chi,\psi-E_h\psi)\\
		=&\sum_{\tau\in \mathcal{T}_{h,H}}\int_{\tau}(f+\na\cdot\bar{\A}\na \chi)(\psi-E_h\psi)\dx-\sum_{e\in\E_\cap}\int_e\jump{\bar{\A}\na \chi\cdot \n}\aver{\psi-E_h\psi}^\omega\ds\\
		&\quad+\sum_{e\in\E_\cap}\int_e\aver{\bar{\A}^T\na (\psi-E_h\psi)\cdot \n}_\omega\jump{u_0-\chi}\ds \\
		&\quad-\sum_{e\in\E_\cap}\int_e\frac{\gamma}{H_e+h_e}\jump{u_0-\chi}\jump{\psi-E_h\psi}\ds+\Lr{\bar{A}(\chi,\psi-E_h\psi)-A(\chi,\psi-E_h\psi)}\\
		=&I_1+\cdots+I_5.
	\end{align*}
	
	Using~\eqref{eq:post1} and~\eqref{eq:approx0} with $m=0$, we obtain
	\begin{align*}
		\abs{I_1}&\le
		\Lr{\sum_{\tau\in \mathcal{T}_{h,H}}h_{\tau}^2\nm{f+\na\cdot\bar{\A}\na \chi}{0,\tau}^2}^{1/2}
		\Lr{\sum_{\tau\in \mathcal{T}_{h,H}^\Gamma}h_{\tau}^{-2}\nm{\psi-E_h\psi}{0,\tau}^2}^{1/2}\\
		&\le C\varrho\Lr{\inf_{\chi\in X_{h,H}^0}\abs{u_0-\chi}_{1,D}+\textsc{osc}(f)+\textsc{osc}(\A)}\wnm{\psi}.
	\end{align*}
	
	Using~\eqref{eq:post2} and~\eqref{eq:enrich-trace}, we obtain
	\begin{align*}
		\abs{I_2}&\le
		\Lr{\sum_{e\in\E_\cap}h_e\nm{\jump{\bar{\A}\na \chi\cdot \n}}{0,e}^2}^{1/2}\Lr{\sum_{e\in\E_\cap}h_e^{-1}\nm{\aver{\psi-E_h\psi}^\omega}{0,e}^2}^{1/2}\\
		&\le
		C\varrho\Lr{\inf_{\chi\in X_{h,H}^0}\abs{u_0-\chi}_{1,D}+\textsc{osc}(f)+\textsc{osc}(\A)}\wnm{\psi}.
	\end{align*}
	
	Using~\eqref{eq:enrich-trace1}, we obtain
	\begin{align*}
		\abs{I_3}&\le C\Lr{\sum_{e\in\E_\cap}\dfrac{H_e+h_e}{\gamma}\abs{\aver{\psi-E_h\psi}}_{1,e}^2}^{1/2}\Lr{\sum_{e\in \E_\cap}\frac{\gamma}{H_e+h_e}\nm{\jump{u_0-\chi}}{0,e}^2}^{1/2}\\
		&\le C(\varrho/\gamma_0)\wnm{u_0-\chi}\wnm{\psi}.
	\end{align*}
	
	Using~\eqref{eq:approx2}, we may bound the last two terms as
	\[
	\abs{I_4}\le C\wnm{u_0-\chi}\wnm{\psi},
	\]
	and
	\begin{align*}
		\abs{I_5}\le C\textsc{osc}(\A)\wnm{\chi}\wnm{\psi-E_h\psi}
		\le C\varrho\,\textsc{osc}(\A)\Lr{\wnm{u_0-\chi}+\abs{u_0}_{1,D}}\wnm{\psi}.
	\end{align*}
	Combining all the above estimates and using Lemma~\ref{lemma:Strang}, we obtain~\eqref{eq:h1part1}.
	
Proceeding along the same line that leads to~\cite[Theorem 4.4]{Gudi:2010_New}, we obtain $L^2$ estimate~\eqref{eq:l2part1}, and we postpone the proof to \textbf{Appendix B}.
\qed
\section{Numerical Experiments}\label{sec:numerics}
In this part, we report two examples with different shapes of defects to demonstrate the accuracy and efficiency of the method. The governing equation is~\eqref{eq:prob} with domain $D=(0,1)^2$,$f=1$ and the homogeneous Dirichlet boundary condition is imposed on $\pa D$. The finite element solvers are carried on FreeFem++ toolbox~\cite{Hecht:2012}\footnote{https://freefem.org/}.

The first example is  taken from~\cite{HuangLuMing:2018} with a two-scale coefficient.
\textbf{Example 1}:\label{Ex1}
Let
\begin{equation}\label{eq:ex1}
	a^\eps(x)=\frac{(R_1+R_2\sin(2\pi x_1)(R_1+R_2\cos(2\pi x_2))}{(R_1+R_2\sin(2\pi x_1/\eps))(R_1+R_2\sin(2\pi x_2/\eps))}I,
\end{equation}
where $I$ is a two by two identity matrix. The effective matrix is given by
\begin{equation}\label{eq:ex1homo}
	\A(x)=\frac{(R_1+R_2\sin(2\pi x_1)(R_1+R_2\cos(2\pi x_2))}{R_1\sqrt{R_1^2-R_2^2}}I.
\end{equation}
In the simulation we let $R_1=2.5$, $R_2=1.5$ and $\eps=0.01$. The reference
solutions for $u^\eps$ and $u_0$ are obtained by solving Problem~\eqref{eq:prob} and~\eqref{eq:homoprob} with
linear element over a uniform mesh with the mesh size around $3.33e-4$.

The second example is taken from~\cite{AbdulleJecker:2015} and the coefficient has no clear scale separation inside $K_0$.
\textbf{Example 2}:\label{Ex2}
For a subset $B\subset D$, we define $\delta_B$ as the characteristic function for the set $B$. The setup for this example is the same with the first one except that the coefficient is replaced by $a^\eps=\delta_{K_0}\tilde{a}+(1-\delta_{K_0})\tilde{a}^\eps$, where
\[
\tilde{a}(x)=\left(3+\frac{1}{7}\sum_{j=0}^{4}\sum_{i=1}^{j}\frac{1}{j+1}\cos\left(\left\lfloor8\left(ix_2-\frac{x_1}{i+1}\right)\right\rfloor+\lfloor150ix_1\rfloor+\lfloor150x_2\rfloor\right)\right)I,
\]
and
\[
\tilde{a}^\eps(x)=(2.1+\cos(2\pi x_1/\eps)\cos(2\pi x_2/\eps)+\sin(4x_1^2x_2^2))I.
\]
We let $\eps=0.0063$ in the simulation.

By~\cite{HuangLuMing:2018},  the effective matrix $\A=\delta_{K_0}\tilde{a}+(1-\delta_{K_0})\tilde{A}$, where $\tilde{A}$ is the effective matrix associated with  $\tilde{a}^\eps$. Since there is no analytical formula for $\tilde{A}$, we use the least-squares reconstruction method in~\cite{LiMingTang:2012, HuangMingSong:2020} to obtain a higher-order approximation to $\wt{A}$ so that $e(\text{HMM})$ is negligible. The approximation to $\wt{A}$ is denoted by $\tilde{A}_h$. The homogenized solution $u_0$ is computed by solving Problem~\eqref{eq:homoprob} with $\A_h=\delta_{K_0}\tilde{a}+(1-\delta_{K_0})\tilde{A}_h$. The hybrid coefficient is
\[
b^\eps_h=\delta_{K_0}\tilde{a}+(1-\delta_{K_0})(\rho\tilde{a}^\eps+(1-\rho)\tilde{A}_h).
\]
We use linear element  to compute $u^\eps$ and $u_0$ over a refined $3000\times 3000$ mesh as the reference solution.

For both examples, our major interests are the following two quantities:
\begin{equation}\label{eq:testerr}
	e(u_0){:}=\frac{\abs{ u_0-v_h}_{1,K_2}}{\abs{ u_0}_{1,K_2}}\quad \text{and}\quad
	e(u^\eps){:}=\frac{\abs{u^\eps-v_h}_{1,K_0}}{\abs{ u^\eps}_{1,K_0}}.
\end{equation}
%
\subsection{The choice of the penalized factor $\gamma$}
Based on the explicit expressions of the inverse trace inequalities in~\cite{WarburtonHesthaven:2003}, the lower bound $\gamma_0$ in Lemma~\ref{lemma:coer} for the penalized factor may be bounded by
\begin{equation}\label{eq:gamma}
	\gamma_0\leq\left\{\begin{aligned}
		\dfrac{16\sqrt{3}}{9}\dfrac{r(r+1)\sigma\Lam'^2}{\min\{1,\lam'^2\}}\qquad&n=2,\\
		\dfrac{32\sqrt{3}}{27}\dfrac{r(r+2)\sigma\Lam'^2}{\min\{1,\lam'^2\}}\qquad&n=3.
	\end{aligned}\right.
\end{equation}

We test \textbf{Example 1} and \textbf{Example 2} with the well defect (see \S~\ref{sect:well}) for different $\gamma$ and the results are plotted in Fig.~\ref{fig:gamma}.
\begin{figure}[tbhp]
	\centering
	{\includegraphics[width=5.8cm]{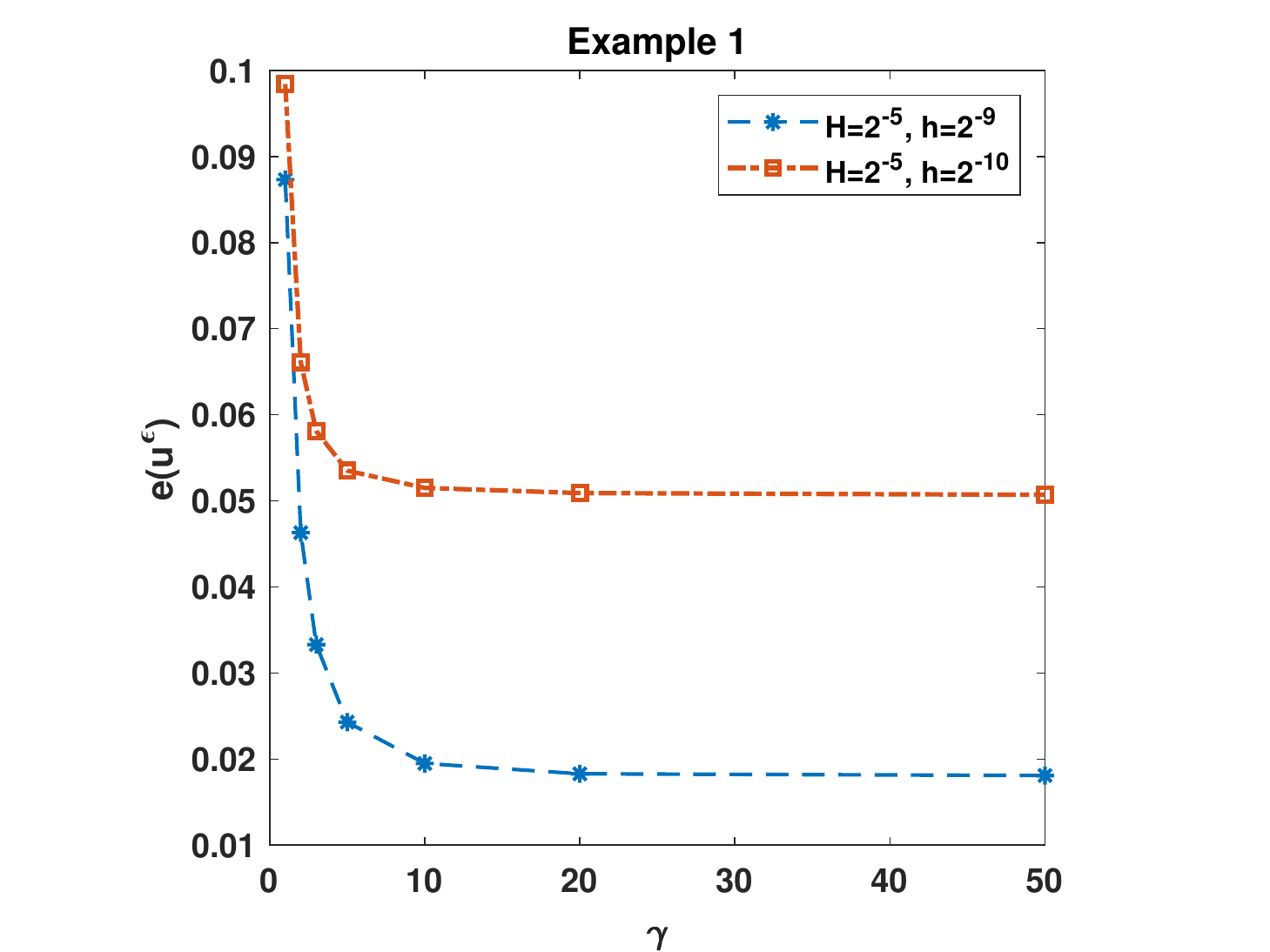}}
	{\includegraphics[width=5.8cm]{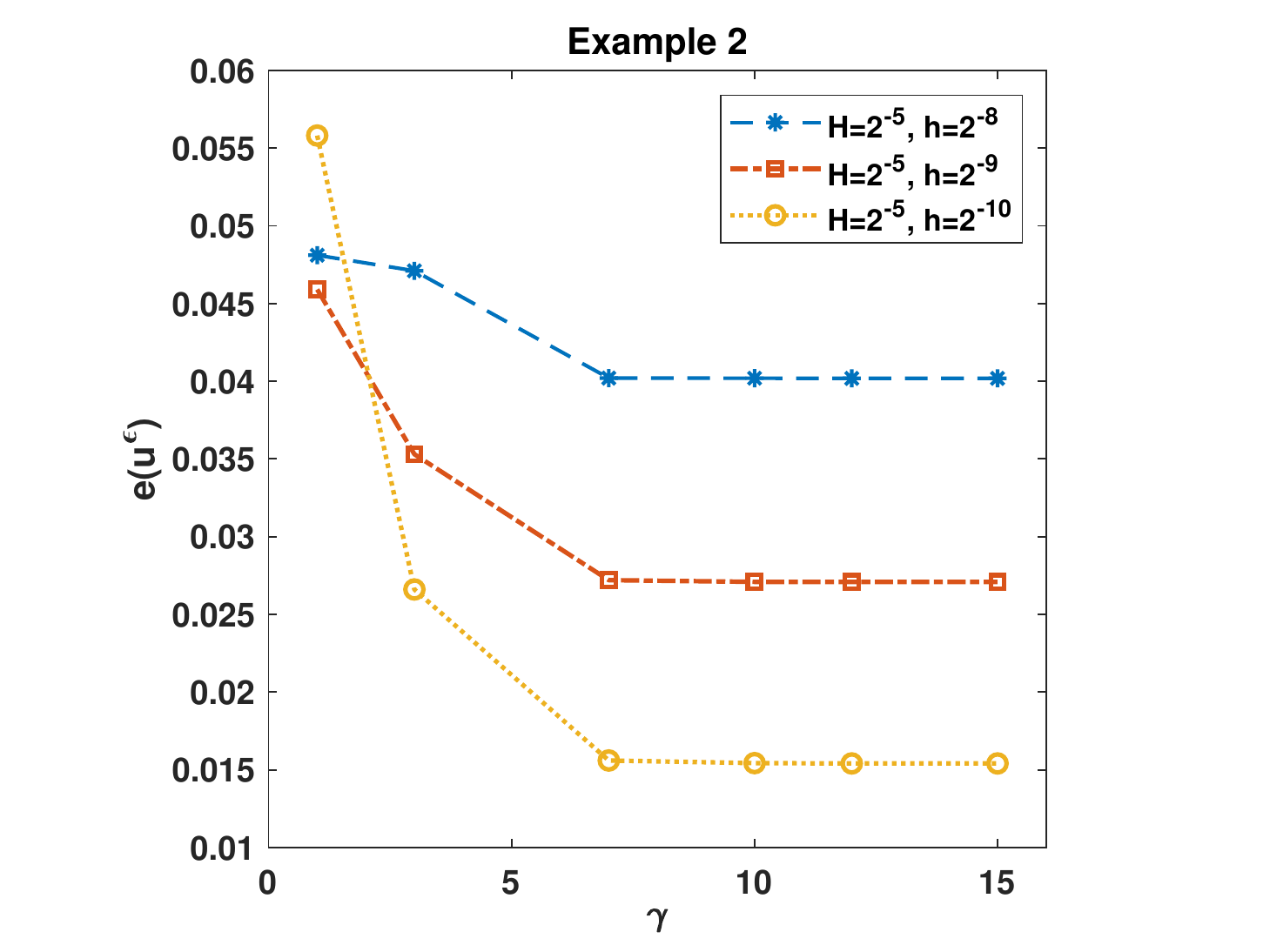}}
	\caption{$\abs{u^\eps-v_h}_{1,K_0}/\abs{ u^\eps}_{1,K_0}$ versus $\gamma$ in the well defect.}
	\label{fig:gamma}
\end{figure}
We observe that the error does not change when $\gamma$ is bigger than certain threshold value.  It is worthwhile to mention that the threshold value is independent of $H/h$. In what follows, we set $\gamma=50$ in \textbf{Example 1} and $\gamma=20$ in \textbf{Example 2}.
\subsection{The effect of the smoothness of the transition function}\label{sect:rho}
%

To study the effect of the smoothness of $\rho$ numerically, we consider a square defect $K_0{:}=(0.5,0.5)+(-L,L)^2$ with $L=0.05$, and define $K_1{:}=(0.5,0.5)+(-L-\delta,L+\delta)^2$ with $\delta=0.05$. The transition function $\rho=\mu(x_1-0.5)\mu(x_2-0.5)$  with $\mu:\mathbb{R}\rightarrow[0,1]$ defined by
\[
\mu(t){:}=\left\{\begin{aligned}
	\dfrac12\cos\Lr{\pi(x+L)/\delta}+\dfrac12, \qquad&\text{if\quad} t\in[-L-\delta,-L),\\
	1\qquad&\text{if\quad}t\in(-L,L),\\
	\dfrac12\cos\Lr{\pi(-x+L)/\delta}+\dfrac12\qquad&\text{if\quad} t\in(L,L+\delta],\\
	0\qquad &\text{otherwise}.
\end{aligned}\right.
\]
Such $\rho$ is a C$^1$ function. We test \textbf{Example 1} with this C$^1$ transition function and the one constructed by a linear interpolant introduced in \S~\ref{sec:method}, which is a continuous function. The results are reported in Table~\ref{table:rho}. We observe that there is no significant difference between these two choices and we shall use the $C^0$ transition function in the following tests.
\begin{table}[tbhp]
	{\footnotesize
		\caption{The error of Example 1 with different $\rho$.}\label{table:rho}
		\begin{center}
			\begin{tabular}{|c|cc|}
				\hline
				$H=2^{-5},h=2^{-8}$&$e(u_0)$
				&$e(u^\eps)$\\
				\hline
				$C^1$ transition func. &4.21e-2&1.12e-1\\
				$C^0$ transition func. &4.21e-2&1.13e-1\\
				\hline
				$H=2^{-6},h=2^{-9}$&$e(u_0)$&$e(u^\eps)$\\
				\hline
				$C^1$ transition func. &1.62e-2&4.62e-2\\	
				$C^0$ transition func. &1.62e-2&4.65e-2\\
				\hline
			\end{tabular}
	\end{center}}
\end{table}
\subsection{The accuracy of the Nitsche hybrid method}\label{sect:cases}
We test three defects with different shapes: the well defect, the channel defect
and the ellipse defect.
%
\subsubsection{Well defect}\label{sect:well}
The defect $K_0=(0.5,0.5)+(-L,L)^2$ with $L=0.05$ is a square, and we define $K_1{:}=(0.5,0.5)+(-L-\delta,L+\delta)^2$ with $\delta=0.05$. We report the results for \textbf{Example 1} in Table~\ref{table:fixH_well1} by fixing $H=2^{-5}$ and decreasing $h$. We observe that  locally refined mesh resolves the local events, and $e(u^\eps)\simeq \mathcal{O}(h)$,which is
consistent with the following explicit form of the error estimate~\eqref{eq:MicroH1}:
\[
\abs{u^\eps-v_h}_{1,K_0}\le C\Lr{h+\dfrac{1}{\delta}\Lr{H^2+L^2\abs{\ln L}+\eps^{s-1}}}.
\]
The last term comes from the following error estimate proved in~\cite{Moskow:1997}:
\[
\nm{u^\eps-u_0}{0,D}\le C\eps^{s-1}\nm{u_0}{s,D} \quad \text{for~} s<3.
\]
The error $e(u_0)$ remains unchanged when $h$ is decreased, which shows that the resolution inside the defect has negligible effect on the accuracy for retrieving the macroscopic information.
\begin{table}[tbhp]
	{\footnotesize
		\caption{Errors of Example 1 for the well defect with a fixed mesh $H=2^{-5}$.}
		\label{table:fixH_well1}
		\begin{center}
			\begin{tabular}{|c|ccc|}
				\hline
				h&$2^{-7}$&$2^{-8}$&$2^{-9}$\\
				\hline
				$e(u^\eps)$& 2.32e-1& 1.15e-1& 4.60e-2\\
				rate&&1.01&1.32\\
				\hline
				$e(u_0)$& 4.60e-2& 4.67e-2& 4.61e-2\\
				\hline
			\end{tabular}
	\end{center}}
\end{table}%

To compute $e(u_0)$,
we fix the mesh size in $K_1$ and refine the mesh in $K_2$. The result is reported in Table~\ref{table:fixh_well1}. When $H\simeq L$, the first order rate of convergence is observed for the error $e(u_0)$, which is consistent with
\[
\abs{u_0-v_h}_{1,K_2}\leq C(h+H+L\abs{\ln L}^{1/2}).
\]
Nevertheless, the error $e(u^\eps)$ remains unchanged when $H$ is decreased.
\begin{table}[tbhp]
	{\footnotesize
		\caption{The error of Example 1 in the well defect with a fixed $h=2^{-10}$.}
		\label{table:fixh_well1}
		\begin{center}
			\begin{tabular}{|c|ccc|}
				\hline
				H&$2^{-4}$&$2^{-5}$&$2^{-6}$\\
				\hline
				
				$e(u_0)$& 1.18e-1& 4.61e-2& 1.76e-2\\
				rate&&1.35&1.39\\
				\hline
				$e(u^\eps)$& 1.96e-2& 1.69e-2& 1.66e-2\\
				\hline
			\end{tabular}
	\end{center}}
\end{table}

We turn to \textbf{Example 2}. It follows from Table~\ref{table:fixH_well2} that the rate of convergence
for $e(u_0)$ is bigger than $1$, while we do not know any quantity estimate on
$e(\hmm)$ in this case.
Fig.~\ref{fig:well_noscale} indicates that the   error $e(u^\eps)$converges at a rate around $0.71$, which deteriorates a little bit than \textbf{Example 1}. This may be due to the roughness of $a^\eps$ inside $K_0$.
\begin{table}[tbhp]
	{\footnotesize
		\caption{Errors of Example 2 for the well defect with a fixed $h=2^{-10}$.}
		\label{table:fixH_well2}
		\begin{center}
			\begin{tabular}{|c|ccc|}
				\hline
				$H$&$2^{-4}$&$2^{-5}$&$2^{-6}$\\
				\hline
				$e(u_0)$& 1.06e-1& 4.90e-2& 1.56e-2\\
				rate&&1.11&1.65\\
				\hline
				$e(u^\eps)$& 1.56e-2& 1.02e-2& 1.02e-2\\
				\hline
			\end{tabular}
	\end{center}}
\end{table}%

\begin{figure}[tbhp]
	\centering
	{\includegraphics[width=6cm]{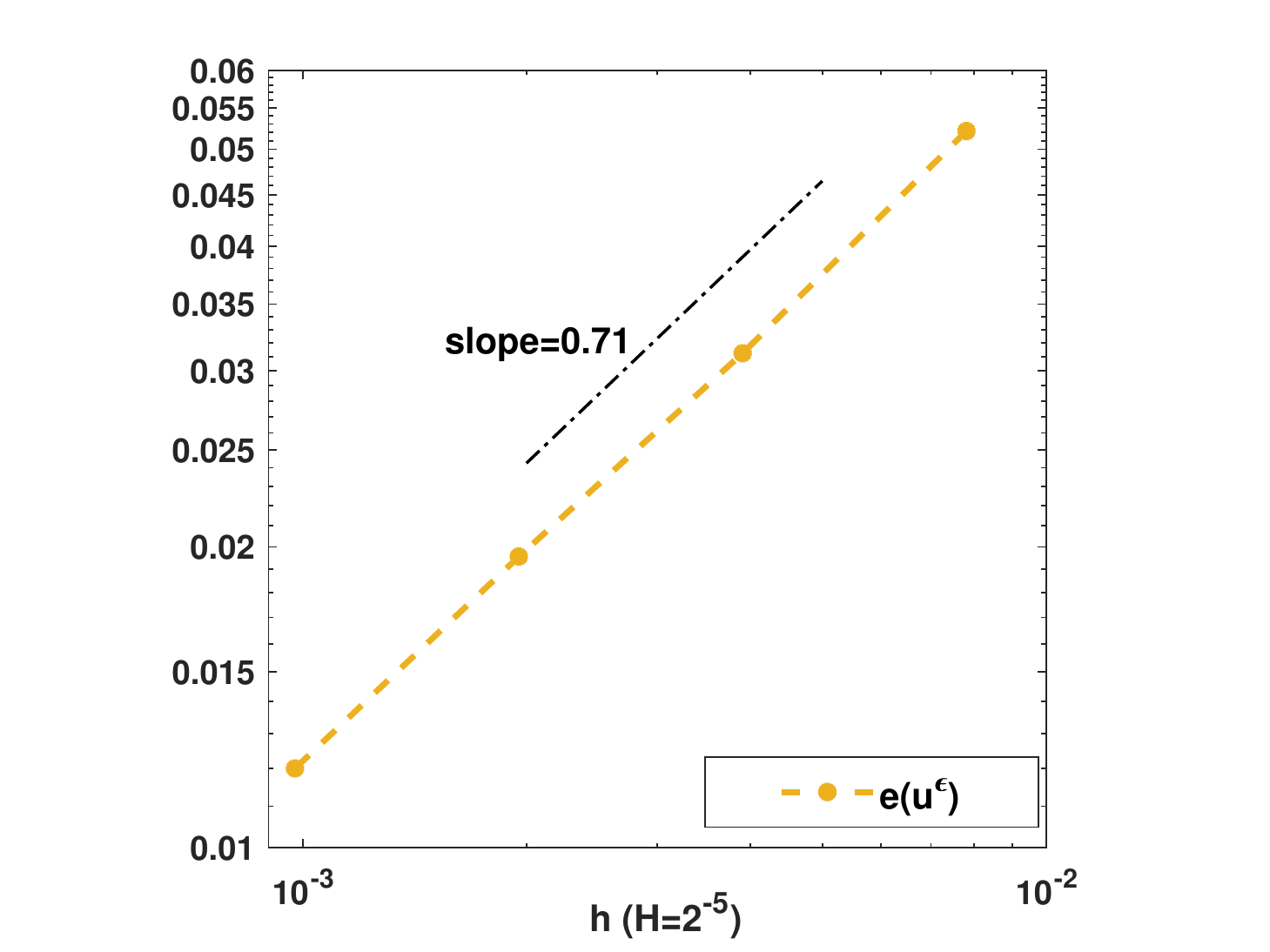}}
	\caption{Errors of \textbf{Example 2} for the well defect with a fixed $H=2^{-5}$.}
	\label{fig:well_noscale}
\end{figure}
\subsubsection{Channel defect}
Let $K_0$ be a channel with a corner. with width  $L=0.05$ and $K_1$ is the set within a distance of $\delta=0.025$ away from the channel; see Fig.~\ref{fig:rho}$_b$.
%

We firstly test \textbf{Example 1} with different $H$. The result is shown in Table~\ref{table:fixh_cha1}. We observe that the first order rate of convergence for the error $e(u_0)$, which is consistent with the theoretical result.
%
\begin{table}[tbhp]
	{\footnotesize
		\caption{The error of Example 1 in the channel defect with $h=2^{-9}$.}
		\label{table:fixh_cha1}
		\begin{center}
			\begin{tabular}{|c|ccc|}
				\hline
				H&$2^{-4}$&$2^{-5}$&$2^{-6}$\\
				\hline
				$e(u_0)$& 9.74E-1& 4.86E-2& 2.40E-2\\
				rate&&1.19&1.00\\
				\hline
				$e(u^\eps)$& 7.74e-2& 7.73e-2& 7.73e-2\\
				\hline
			\end{tabular}
	\end{center}}
\end{table}%

Next we fix $H=2^{-5}$ and decrease $h$, and report the result in Table~\ref{table:fixH_cha1}. We observe that the resolution of the defect has more pronounced influence on the error $e(u_0)$.
\begin{table}[tbhp]
	{\footnotesize
		\caption{The error of Example 1 in the channel defect with $H=2^{-5}$.}
		\label{table:fixH_cha1}
		\begin{center}
			\begin{tabular}{|c|ccc|}
				\hline
				h&$2^{-7}$&$2^{-8}$&$2^{-9}$\\
				\hline
				$e(u^\eps)$& 3.69e-1& 1.99e-1& 7.73e-2\\
				rate&&0.90&1.36\\
				\hline
				$e(u_0)$& 5.10e-2& 4.90e-2& 4.86e-2 \\
				\hline
			\end{tabular}
	\end{center}}
\end{table}%

We report the results for \textbf{Example 2} in Table~\ref{table:fixh_cha2} and Fig.~\ref{fig:channel_noscale}. The method still works with reasonable accuracy. However, from Fig.~\ref{fig:channel_noscale}, we find that the error $e(u^\eps)$ is worse than that in \textbf{Example 1}, which may be due to the poor regularity of the solution inside the defect.
\begin{table}[tbhp]
	{\footnotesize
		\caption{The error of Example 2 in the channel defect with $h=2^{-9}$.}
		\label{table:fixh_cha2}
		\begin{center}
			\begin{tabular}{|c|ccc|}
				\hline
				H&$2^{-4}$&$2^{-5}$&$2^{-6}$\\
				\hline
				$e(u_0)$& 8.85E-2& 4.42E-2& 2.17E-2\\
				rate&&1.00&1.03\\
				\hline
				$e(u^\eps)$& 5.73e-2& 4.80e-2& 4.03e-2\\
				\hline
			\end{tabular}
	\end{center}}
\end{table}%

\begin{figure}[tbhp]
	\centering
	{\includegraphics[width=6cm]{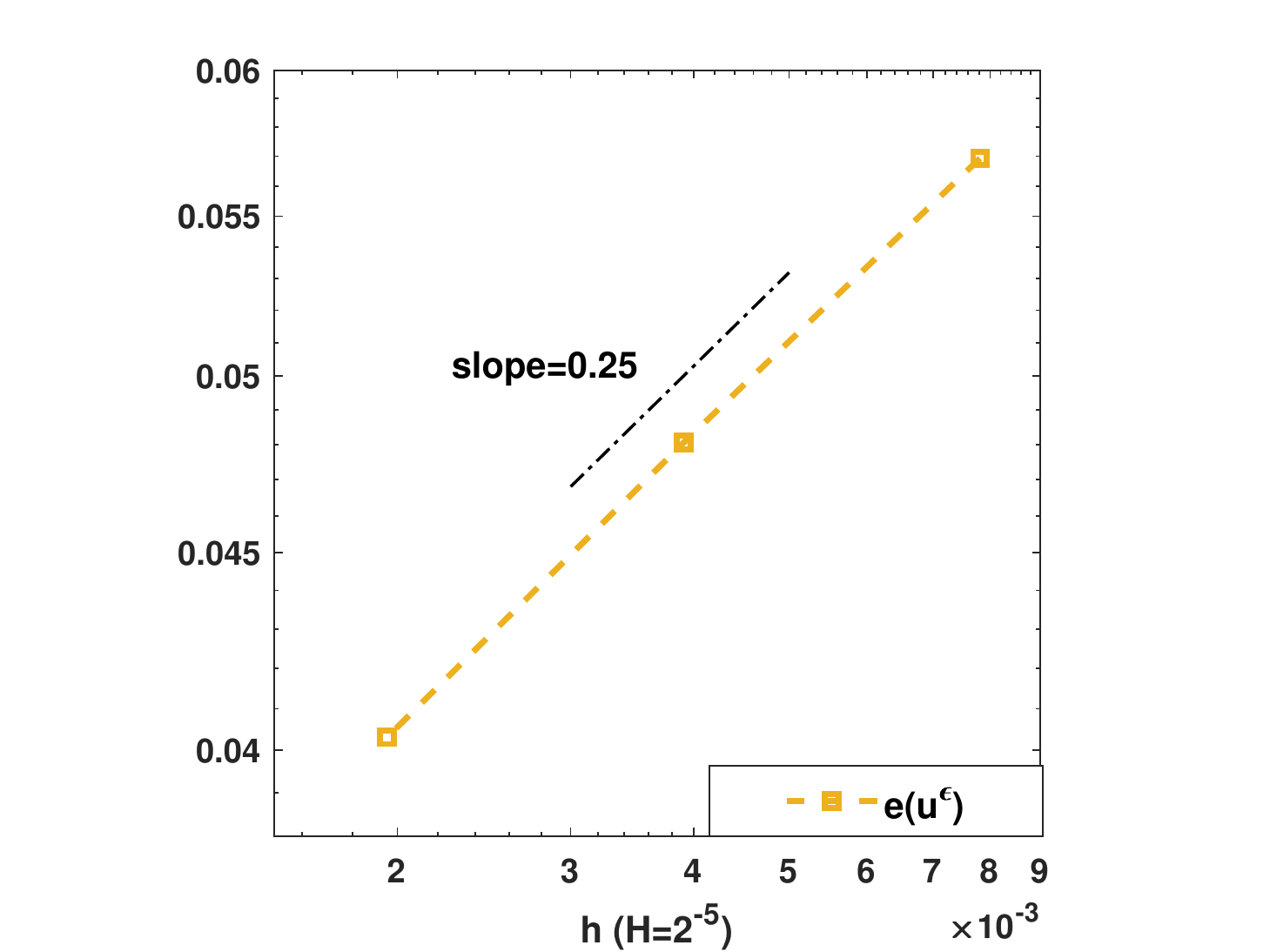}}
	\caption{The error of Example 2 in the channel defect with a fixed $H=2^{-5}$.}
	\label{fig:channel_noscale}
\end{figure}
\subsubsection{Ellipse defects}
We choose two slender ellipse defects as $K_0$ with major axis $0.5$ and minor axis $0.02$; See Fig.~\ref{fig:localell}.  $K_1$ is a rectangle of size $0.54\times0.06$ that contains the ellipse; see Fig.~\ref{fig:rho}$_c$.
\begin{figure}[tbhp]
	\centering
	\includegraphics[width=5.8cm]{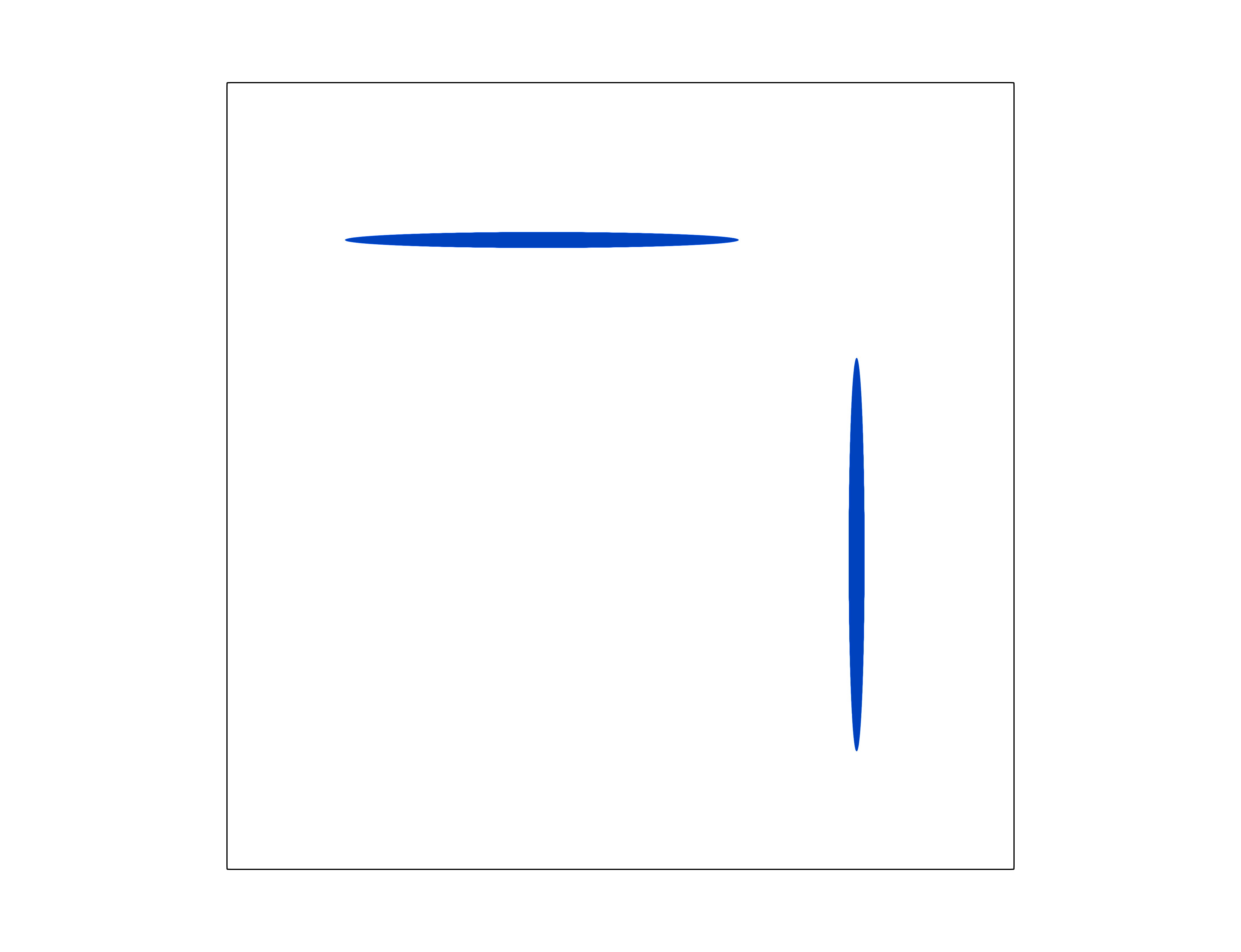}
	\caption{Ellipse defects}
	\label{fig:localell}
\end{figure}

We plot the relative errors in Fig.~\ref{fig:ellipse_H1}$_a$ and Fig.~\ref{fig:ellipse_H1}$_b$ with a fixed ratio $H/h$. The method works for both examples. If we refine the mesh in both subdomains simultaneously, then the energy error is around the first order.
\begin{figure}[tbhp]
	\centering
	{\includegraphics[width=5.8cm]{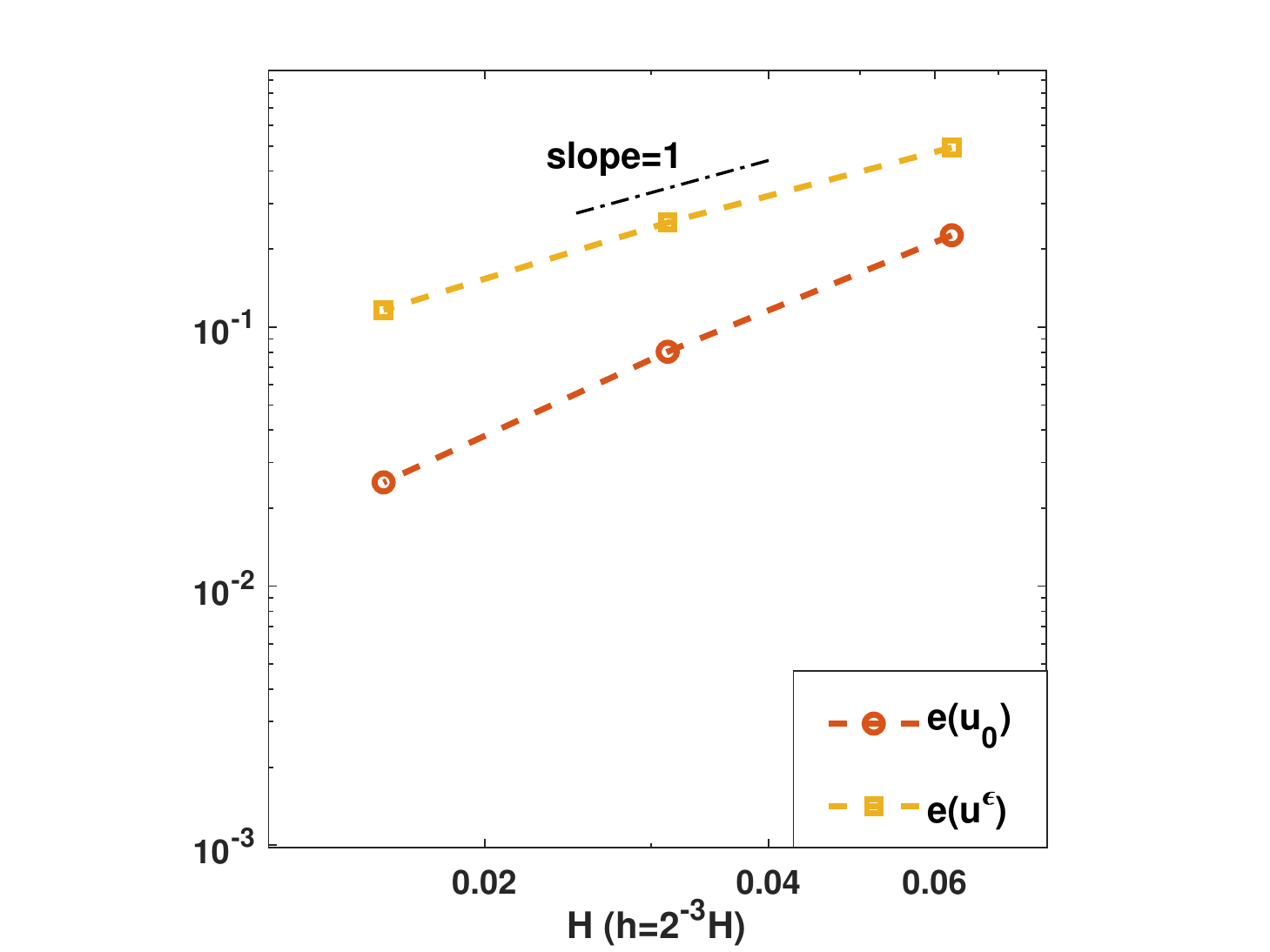}}
	{\includegraphics[width=5.8cm]{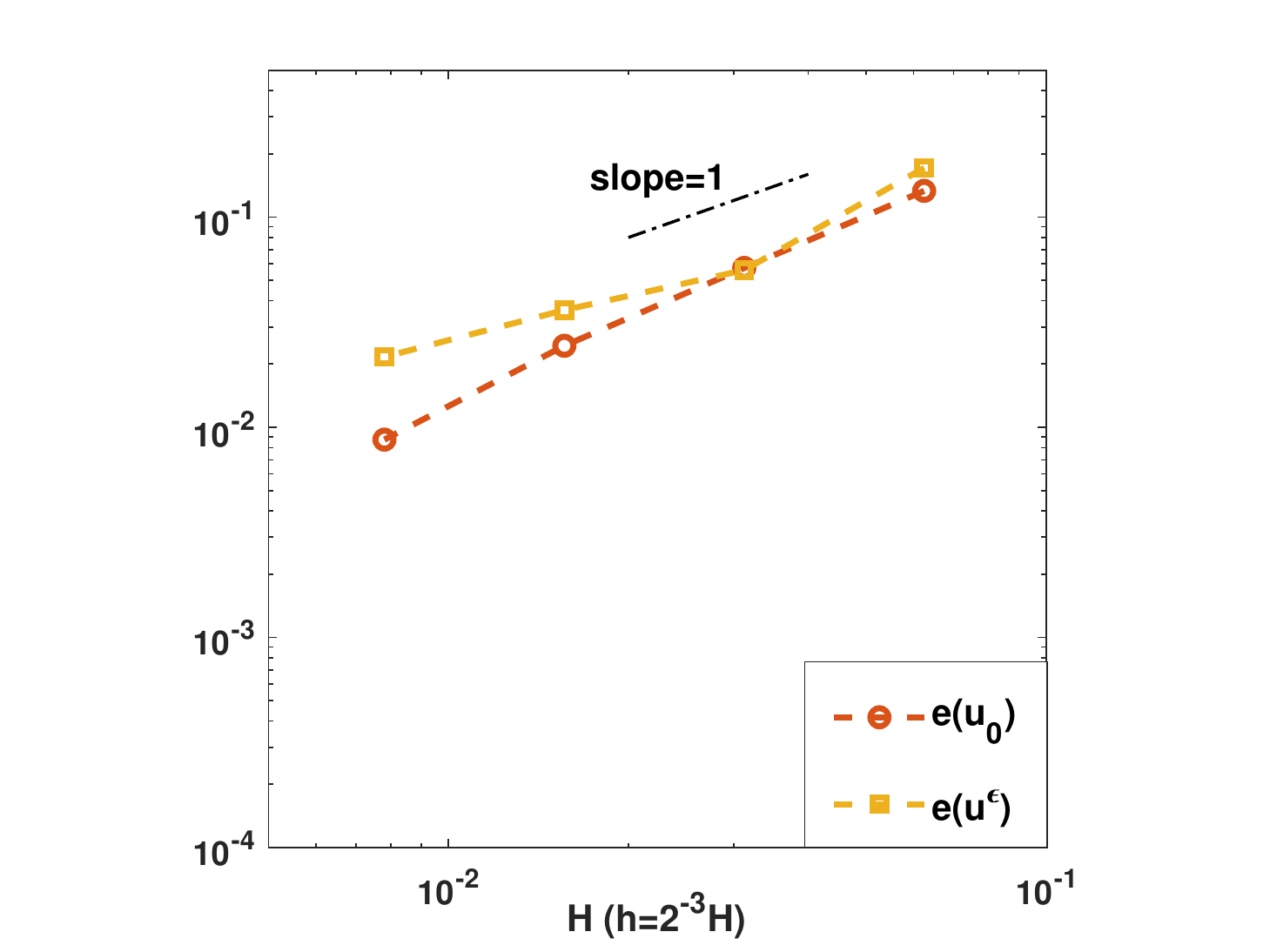}}
	\caption{The relative error for the ellipse defects.}
	\label{fig:ellipse_H1}
\end{figure}
\subsection{Comparison with the original hybrid method}
In the last test, we compare the present method with the hybrid method with a body fitted mesh~\cite{HuangLuMing:2018}. Two kinds of mesh are plot in Fig.~\ref{fig:mesh} for an illustration. Let $K_0=(0.5,0.5)+(-L,L)^2$  and $K_1=(0.5,0.5)+(-L-\delta,L+\delta)^2$ with $L=0.05$ and $\delta=0.01$.
\begin{figure}[tbhp]
	\centering
	{\includegraphics[width=5.8cm]{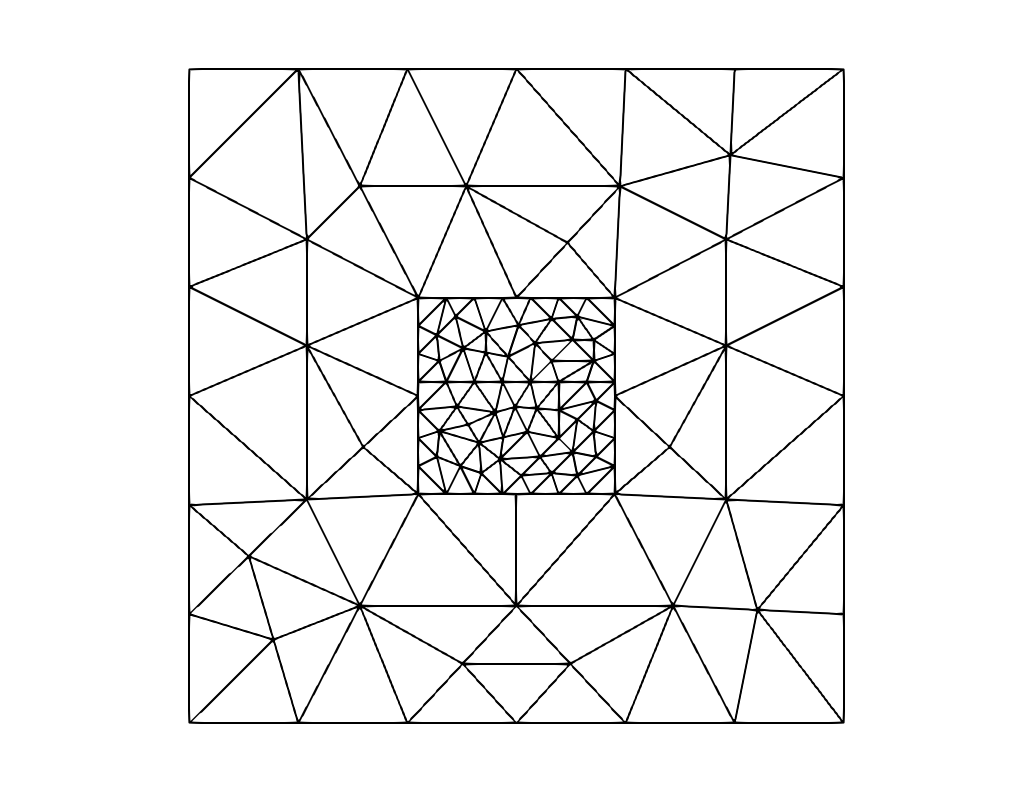}}
	{\includegraphics[width=5.8cm]{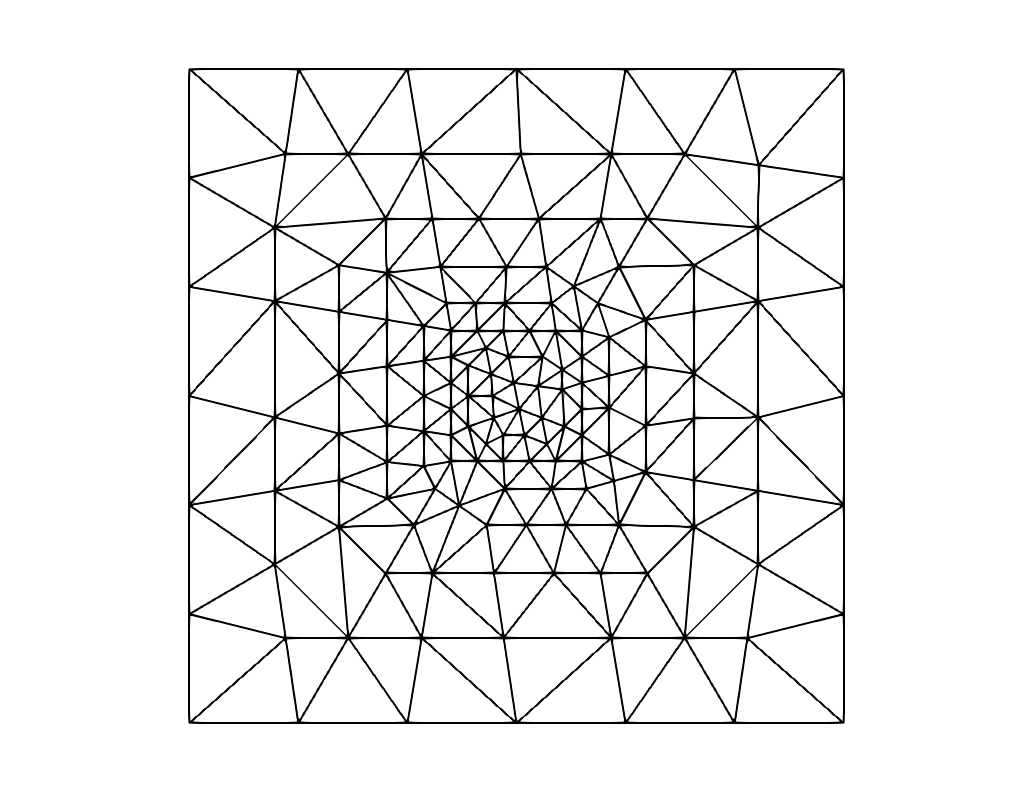}}
	\caption{Left: non-matching grid; Right: matching grid.}
	\label{fig:mesh}
\end{figure}

We choose a mesh with $N=28585$ degrees of freedom for the Nitsche hybrid method, and $N=60564$ degrees of freedom for the original hybrid method. The results for \textbf{Example 1} are summarized in Table~\ref{table:compare}. It seems that the accuracy of both methods are comparable, while the total degrees of freedom of the Nitsche hybrid method is less than one half of the original hybrid method.
\begin{table}[tbhp]
	{\footnotesize
		\caption{Comparison between the Nitsche hybrid method and the original hybrid method.}
		\label{table:compare}
		\begin{center}
			\begin{tabular}{|c|cc|}
				\hline
				& Nitsche hybrid method &hybrid method\\
				\hline
				DOF &28585&60564\\
				$e(u^\eps)$&4.60e-2&4.64e-2\\
				$e(u_0)$&9.30e-3&5.66e-3 \\
				\hline
			\end{tabular}
	\end{center}}
\end{table}
\section{Conclusion}
We present a hybrid method that captures the macroscopical and microscopical information simultaneously in the framework of the Nitsche's variational formulation. A general approach for construction the transition function is proposed. This method admits non-matching grids and works for defects with irregular shape. Hence the method is more efficient and flexible than the original hybrid method~\cite{HuangLuMing:2018}. We prove that the method converges for problems with bounded and measurable coefficients. Rate of convergence has been derived for the periodic media and almost-periodic media.
A possible extension of the present work is to deal with more realistic problem such as parabolic problems with time varying boundary conditions, which is allowed by the Nitsche's variational formulation~\cite{Quar:2016}. We shall leave this for further pursuit.
\section*{Acknowledgments}
The authors would like to thank Professor Jianfeng Lu and Dr Yufang Huang for the discussion on the topic in the earlier stage of the present work. We also thank the anonymous referee for valuable comments.
\appendix

	\section{Proof of Lemma~\ref{lemma:enrich}}
	We define an extension operator: for any $v\in X_{h,H}$
	\[
	E_hv(p){:}=\left\{\begin{aligned}
		v(p)\quad &\text{if\quad} p\in(\N_H\cup \N_h)\setminus \N_\Gamma,\\
		\omega_1v_1(p)+\omega_2v_2(p)\quad &\text{if\quad}p\in \N_\Gamma^0,
	\end{aligned}\right.
	\]
where $v=(v_1,v_2)\in X_h\x X_H$. For each $p\in\N_\Gamma\setminus\N_\Gamma^0$,  there exists $\tau\in\T_H$ such that $p$ sits on the boundary of $\tau$ by \textbf{Assumption B}. Noting that $E_h v$ is well-defined over $\tau$, we may define
\(
	E_h v(p){:}=(E_h v)_{\tau}(p).
\)
The definition of $E_h$ and the uniqueness of the Lagrange interpolation over interface mesh $\E_\cap$ ensure that $E_h v$ is continuous across $\Gamma$. Therefore, $E_hv\in X_{h,H}\cap H^1(D)$ and $E_hv\equiv v$ in $D\setminus\T_{h,H}^\Gamma$.
	
Over each $\tau\in\T_H^\Gamma$, we write
\begin{align*}
		v-E_hv&=\sum_{p\in\N(\tau)\cap\N_\Gamma^0}(v_2-E_hv)(p)\phi_{\tau, p}\\
		&=\sum_{p\in\N(\tau)\cap\N_\Gamma^0}\omega_1(v_2-v_1)(p)\phi_{\tau, p},
	\end{align*}
	where $\phi_{\tau,p}$ is the basis function on $\tau$ associated with node $p$. A  scaling argument shows that, there exist $c_n$ and  $C_n$ independent of $h_\tau$ such that
	\begin{equation}\label{eq:basisest}
		c_nh_\tau^{n-2m}\le\abs{\phi_{\tau, p}}_{m, \tau}^2\le C_nh_\tau^{n-2m}.
	\end{equation}
	Combining the elements in $\T_{H}^\Gamma$ and using the standard inverse inequality~\cite{Ciarlet:1978}, we obtain
	\begin{align}
		\sum_{\tau\in\T_{H}^\Gamma}h_\tau^{2m-2}\abs{v-E_h}_{m,\tau}^2
		&\le C \sum_{e\in \E_\cap}\dfrac{h_e^2}{(h_e+H_e)^2}H_e^{n-2}\nm{\jump{v}}{0,\infty,e}^2\nn\\
		&\le C\sum_{e\in \E_\cap}\dfrac{h_e^2}{(h_e+H_e)^2}H_e^{n-2}h_e^{-d+1}\nm{\jump{v}}{0,e}^2\nn\\
		&\le C\sum_{e\in \E_\cap}h_e^{-1}\nm{\jump{v}}{0,e}^2,\label{eq:approxH}
	\end{align}
	where we have used
	\[
	h_e^{-n+3}H_e^{n-2}/(h_e+H_e)^2\le(h_e/H_e)^{-n+4}h_e^{-1}
	\le h_e^{-1}\qquad n=2,3.
	\]
	
	Next, over each $\tau\in\T_h^\Gamma$, we write
	\begin{align*}
		v-E_hv&=\sum_{p\in\N(\tau)\cap\Gamma}(v_1-E_hv)(p)\phi_{\tau, p}\\
		&=\sum_{p\in\N(\tau)\cap\Gamma}(v_1-v_2)(p)\phi_{\tau,p}+\sum_{p\in\N(\tau)\cap\Gamma}(v_2-E_hv)(p)\phi_{\tau, p}
\end{align*}
	
By \textbf{Assumption B}, there exists $\wt{\tau}\in\T_H$ such that $p$ sits on the boundary of $\wt{\tau}$, and
\begin{align*}
E_hv(p)-v_2(p)&=\sum_{\wt{p}\in\N({\wt{\tau}})\cap\N_\Gamma^0}(E_hv-v_2)(\wt{p})
\phi_{\wt{\tau},\wt{p}}(p)\\		&=\sum_{\wt{p}\in\N({\wt{\tau}})\cap\N_\Gamma^0}\omega_1(v_1-v_2)(\wt{p})\phi_{\wt{\tau},\wt{p}}(p).
\end{align*}
	A scaling argument shows that, there exist $\wt{c}_n$ and  $\wt{C}_n$ independent of $h_{\tau}$ such that
	\[
	\wt{c}_nh_{\tau}^{n-2m}\le\abs{\phi_{\tau, p}}_{m,\tau}^2\le\wt{C}_dh_{\tau}^{n-2m}.
	\]
	By \textbf{Assumption A}, the number of the hanging nodes on each $e\in\E_H$ may be bounded by $c(H_e/h_\Gamma)^{n-1}$.  Thus, proceeding along the same line that leads to~\eqref{eq:approxH} and using the above estimate and the inverse inequality, we obtain
	\begin{align}
		\sum_{\tau\in\T_{h}^\Gamma}h_\tau^{2m-2}\abs{v-E_hv}_{m,\tau}^2
		\le& C\sum_{e\in\E_\cap}h_e^{n-2}h_e^{-n+1}\nm{\jump{v}}{0,e}^2\nn\\
		&+C\sum_{e\in \E_\cap}({H_e}/{h_\Gamma})^{n-1}h_e^{n-2}h_e^{-n+1}\omega_1^2\nm{\jump{v}}{0,e}^2\nn\\
		\le& C\sum_{e\in\E_\cap}h_e^{-1}\nm{\jump{v}}{0,e}^2,\label{eq:approxh}
	\end{align}
	where we have used
	\[
	({H_e}/{h_\Gamma})^{n-1}h_e^{-1}\omega_1^2\le (H_e/h_e)^{n-3}h_e^{-1}\le h_e^{-1}\qquad n=2,3.
	\]
	
	Combining~\eqref{eq:approxH} and~\eqref{eq:approxh}, we obtain~\eqref{eq:approx1}.

	\section{Proof of~\eqref{eq:l2part1}}
	For any $\chi\in X_{h,H}^0$, using ~\eqref{eq:dual} and the enriching operator $E_h$ defined in Lemma~\ref{lemma:enrich}, we obtain
	\begin{align*}
		\dual{g}{u_0-\wt{u}}&=\dual{g}{u_0-E_h\wt{u}}+\dual{g}{E_h\wt{u}-\wt{u}}\\
		&=\dual{\A\na(u_0-E_h\wt{u})}{\na \psi_g}+\dual{g}{E_h\wt{u}-\wt{u}}\\
		&=\dual{f}{\psi_g}-\sum_{i=1}^2\int_{K_i}\A\na\wt{u}\na\psi_g\dx+\sum_{i=1}^2\int_{K_i}\A\na(\wt{u}-E_h\wt{u})\na\psi_g\dx\\
		&\quad+\dual{g}{E_h\wt{u}-\wt{u}},
	\end{align*}
	which may be further expanded as
	\begin{align*}
		\dual{g}{u_0-\wt{u}}
		&=\dual{f}{\chi}-\sum_{i=1}^2\int_{K_i}\A\na\wt{u}\na\chi\dx+\dual{f}{\psi_g-\chi}-\sum_{i=1}^2\int_{K_i}\A\na\wt{u}\na(\psi_g-\chi)\dx\\
		&\quad+\dual{g}{E_h\wt{u}-\wt{u}}-\sum_{i=1}^2\int_{K_i}\A\na(E_h\wt{u}-\wt{u})\na\chi\dx\\
		&\quad+\sum_{i=1}^2\int_{K_i}\A\na(\wt{u}-E_h\wt{u})\na(\psi_g-\chi)\dx.
	\end{align*}
	
	Using~\eqref{eq:hodisvar},~\eqref{eq:magic} and integration by parts, for any
	piecewise constant matrix $\bar{\A}$ over $\T_{h,H}$, we have
	\begin{align*}
		&\dual{g}{u_0-\wt{u}}\\
		=&\left\{\dual{f+\na\cdot\bar{\A}\na\wt{u}}{\psi_g-\chi}-\sum_{e\in\E_\cap}\int_e\jump{\bar{\A}\na\wt{u}\cdot\n}\aver{\psi_g-\chi}^\omega\ds\right\}\\
		&+\left\{\dual{g+\na\cdot\bar{\A}^T\na\chi}{E_h\wt{u}-\wt{u}}-\sum_{e\in\E_\cap}\int_e\jump{\bar{\A}^T\na\chi\cdot\n}\aver{E_h\wt{u}-\wt{u}}^\omega\ds\right\}\\
		&+\left\{\sum_{i=1}^2\int_{K_i}\A\na(\wt{u}-E_h\wt{u})\na(\psi_g-\chi)\dx+\sum_{e\in\E_\cap}\int_e\frac{\gamma}{H_e+h_e}\jump{\wt{u}}\jump{\chi}\ds\right\}\\
		&+\left\{-\sum_{i=1}^2\int_{K_i}(\A-\bar{\A})\na\wt{u}\na(\psi_g-\chi)\dx-\sum_{e\in\E_\cap}\int_e\aver{(\A-\bar{\A})\na \wt{u}}_\omega\jump{\chi}\ds\right\}\\
		&+\left\{-\sum_{i=1}^2\int_{K_i}(\A-\bar{\A})\na(E_h\wt{u}-\wt{u})\na\chi\dx-\sum_{e\in\E_\cap}\int_e\aver{(\A-\bar{\A})\na\chi}_\omega\jump{\wt{u}}\ds\right\}\\
		=&I_1+\cdots+I_5.
	\end{align*}
	
	Using Lemma~\ref{lemma:post}, we obtain
	\begin{equation}\label{eq:L2_1st}
		\begin{split}
			\abs{I_1}
			&\le C\left(\wnm{u_0-\wt{u}}+\text{Osc}(f)+\max_{\tau\in {\T_{h,H}}}\nm{\A-\bar{\A}}{0,\infty,\tau}\nm{f}{-1,D}\right)\\
			&\quad\times\sum_{\tau\in \mc{T}_{h,H}}\Lr{h_\tau^{-1}\nm{\psi_g-\chi}{0,\tau}+\abs{\psi_g-\chi}_{1,\tau}}.
		\end{split}
	\end{equation}
	
	Using Lemma~\ref{lemma:post} and  the enriching estimates~\eqref{eq:approx0},~\eqref{eq:enrich-trace}, we obtain
	\[
	\abs{I_2}\le C\varrho\left(\wnm{\psi_g-\chi}+\textsc{osc}(g)+\max_{\tau\in {\T_{h,H}}}\nm{\A-\bar{\A}}{0,\infty,\tau}\nm{g}{-1,D}\right)\wnm{u_0-\wt{u}},
	\]
	where we have used the fact $\jump{u_0}\equiv 0$ on $\Gamma$.
	
	Using ~\eqref{eq:approx0} and the fact $\jump{\psi_g}\equiv\jump{u_0}\equiv 0$ on $\Gamma$, we have 		
	\[
	\abs{I_3}\le C\varrho\wnm{\wt{u}-u_0}\wnm{\psi_g-\chi}.
	\]
	Finally,
	\begin{align*}
		\abs{I_4}&\le C \max_{\tau\in {\T_{h,H}}}\nm{\A-\bar{\A}}{0,\infty,\tau}\Lr{\sum_{i=1}^2\abs{\wt{u}}_{1,K_i}^2}^{1/2}\wnm{\psi_g-\chi}\\
		&\le C \max_{\tau\in {\T_{h,H}}}\nm{\A-\bar{\A}}{0,\infty,\tau}\Lr{\wnm{u_0-\wt{u}}+\nm{f}{-1,D}}\wnm{\psi_g-\chi},
	\end{align*}
	and
	\[
	\abs{I_5}\le C\varrho \max_{\tau\in {\T_{h,H}}}\nm{\A-\bar{\A}}{0,\infty,\tau}\Lr{\wnm{\psi_g-\chi}+\nm{g}{-1,D}}\wnm{u_0-\wt{u}}.
	\]
	Combining all the above estimates, we obtain~\eqref{eq:l2part1}.
	%

%

%
%

\bibliographystyle{siamplain}
\bibliography{HybridMethod-2}

\end{document}